\newcommand{\Q}{\mathbb{Q}}
\newcommand{\R}{\mathbb{R}}
\newcommand{\Z}{\mathbb{Z}}
\newcommand{\F}{\mathbb{F}}
\newcommand{\Alg}{\text{\normalfont{Alg}}}
\newcommand{\bs}{\boldsymbol}
\renewcommand{\mod}{\,\,\text{\normalfont mod}\,}
\newcommand{\M}{\mathscr{M}}
\DeclareMathOperator{\Aut}{Aut}
\DeclareMathOperator{\Hom}{Hom}
\DeclareMathOperator{\id}{id}
\DeclareMathOperator{\cd}{cd}
\DeclareMathOperator{\rank}{rank}
\DeclareMathOperator{\Def}{def}
\DeclareMathOperator{\IM}{Im}
\DeclareMathOperator{\Ker}{Ker}
\DeclareMathOperator{\Diff}{Diff}
\DeclareMathOperator{\SF}{SF}
\DeclareMathOperator{\HT}{HT}
\DeclareMathOperator{\PHT}{PHT}
\DeclareMathOperator{\gd}{gd}
\DeclareMathOperator{\FL}{FL}
\DeclareMathOperator{\st}{st}
\DeclareMathOperator{\sCob}{sCob}
\DeclareMathOperator{\mst}{/\hspace{-1mm}=_{\st}}
\DeclareMathOperator{\dcong}{\cong_{\Diff}}
\DeclareMathOperator{\ev}{ev}
\newtheorem{thm}{Theorem}[section]
\newtheorem*{thm*}{Theorem}
\newtheorem{prop}[thm]{Proposition}
\newtheorem*{prop*}{Proposition}
\newtheorem{lemma}[thm]{Lemma}
\newtheorem{corollary}[thm]{Corollary}
\newtheorem*{corollary*}{Corollary}
\newtheorem*{question*}{Question}
\newtheorem{problem}[thm]{Problem}
\newtheorem*{problem*}{Problem}
\newtheorem{plist}{Problem}
\newtheorem{thmx}{Theorem}
\theoremstyle{definition}
\newtheorem*{theorem*}{Theorem}
\newtheorem{construction}[thm]{Construction}
\theoremstyle{remark}
\newtheorem{remark}[thm]{Remark}
\newtheorem*{remark*}{Remark}
\newtheoremstyle{custom}{}{}{\itshape}{}{\bfseries}{.}{.5em}{\thmnote{#3}#1}
\theoremstyle{custom}
\lstdefinestyle{mystyle}{
    basicstyle=\ttfamily\footnotesize,
    breakatwhitespace=false,         
    breaklines=true,                 
    captionpos=b,                    
    keepspaces=true,                                   
    showspaces=false,                
    showstringspaces=false,
    showtabs=false,                  
    tabsize=2
}
\newenvironment{clist}[1]
{\begin{enumerate}[\normalfont #1]}
{\end{enumerate}}
\newcommand{\wh}{\widehat}
\newcommand{\wt}{\widetilde}
\begin{document}

\title[Stably free modules and the unstable classification of 2-complexes]{Stably free modules and the unstable classification \\ of 2-complexes}

\author{John Nicholson}
\address{School of Mathematics and Statistics, University of Glasgow, U.K.}
\email{john.nicholson@glasgow.ac.uk}


\subjclass[2020]{Primary 57K20; Secondary 20C12, 55P15, 57M05}

\begin{abstract}
For all $k \ge 2$,  we show that there exists a group $G$ and a non-free stably free $\Z G$-module of rank $k$. We use this to show that, for all $k \ge 2$, there exist homotopically distinct finite $2$-complexes with fundamental group $G$ and with Euler characteristic exceeding the minimal value over $G$ by $k$.
This resolves Problem D5 in the 1979 Problem List of C. T. C. Wall.
We also explore a number of generalisations and present a potential application to the topology of closed smooth 4-manifolds.
\end{abstract}

\maketitle

\section{Introduction}

An important concept in algebra and topology is that of stabilisation. When faced with an intractable classification problem, it is often possible to classify up to a weaker notion of stable equivalence.
For example, a projective $R$-module can be `stabilised' by taking a direct sum with the free module $R$, and we say that two projective $R$-modules $P$ and $Q$ are (reduced) \textit{stably equivalent} if they are related by a sequence of stabilisations, i.e. if $P \oplus R^n \cong Q \oplus R^m$ for some $n,m \ge 0$. 
While it is typically difficult to classify the finitely generated projective $R$-modules up to $R$-isomorphism, the set of stable equivalence classes form a group $\wt K_0(R)$ which is computable, at least in principle, using the methods of algebraic $K$-theory. Given such a stable classification, it then remains to classify the objects within a stable class up to isomorphism.  
This is the corresponding \textit{unstable classification}.

The aim of this article will be to study the unstable classification of three closely related objects, where $G$ is a fixed group and $\rightsquigarrow$ denotes the stabilisation:
\begin{clist}{(I)}
\item Finitely generated (non-zero) projective $\Z G$-modules up to $\Z G$-isomorphism, with $P \rightsquigarrow P \oplus \Z G$.
\item Finite 2-complexes $X$ with $\pi_1(X) \cong G$ up to homotopy equivalence, with $X \rightsquigarrow X \vee S^2$.
\item Closed smooth 4-manifolds $M$ with $\pi_1(M) \cong G$ up to homeomorphism, with $M \rightsquigarrow M \# (S^2 \times S^2)$.
\end{clist}
In each case, the stable classification can be computed in principle. For (I), the stable classification is given by $\wt K_0(\Z G)$, as above. 
For (II), the stable classification is trivial, i.e. for all $X$, $Y$ we have $X \vee nS^2 \simeq Y \vee mS^2$ for some $n,m \ge 0$ \cite[Theorem 13]{Wh39}. 
For (III), the stable classification can be reduced to the computation of certain bordism groups using Kreck's modified surgery \cite{Kr99}. 

Our main result is the construction of examples which illustrate qualitative features in the unstable classifications of projective $\Z G$-modules (I) and finite 2-complexes (II). We will show:

\begin{thmx} \label{thm:simple-modules}
For all $k \ge 2$, there exists a group $G$ and finitely generated projective $\Z G$-modules $P$ and $Q$ such that $P \oplus \Z G \cong Q \oplus \Z G$ and $Q \cong Q_0 \oplus \Z G^k$ for some $\Z G$-module $Q_0$, but $P \not \cong Q$.
\end{thmx}

\begin{thmx} \label{thm:simple-complexes}
For all $k \ge 2$, there exists finite $2$-complexes $X$ and $Y$ such that $X \vee S^2 \simeq Y \vee S^2$ and $Y \simeq Y_0 \vee kS^2$ for some finite $2$-complex $Y_0$, but $X \not \simeq Y$.
\end{thmx}

In each case, for $k \ge 2$, we can take $G = \ast_{i=1}^k T$ to be the free product of $k$ copies of the trefoil group $T= \langle x,y \mid x^2=y^3 \rangle$. The examples in \cref{thm:simple-modules} are in the simplest case where $Q = \Z G^k$ and so $P$ is a non-free stably free $\Z G$-module of rank $k$. The examples $X$, $Y$ in \cref{thm:simple-complexes} are distinguished by showing that $\pi_2(X) \cong P$ and $\pi_2(Y) \cong \Z G^k$ as $\Z G$-modules.

We will now review unstable classification for (I)-(III) before discussing each case individually. We give generalisations of Theorems \ref{thm:simple-modules} and \ref{thm:simple-complexes} and applications to smooth 4-manifolds (III).

\subsection{Background on unstable classification} \label{ss:intro-unstable}

For a set $\mathcal{S}$, define a \textit{stabilisation} to be a function $\Sigma : \mathcal{S} \to \mathcal{S}$ such that $\ell(a) := \sup\{k \ge 0: \Sigma^n(a) = \Sigma^{n+k}(b) \text{ for some $b \in \mathcal{S}$, $n \ge 0$}\} < \infty$ where $n, k$ are integers. That is, for all $a \in \mathcal{S}$, there does not exist an infinite sequence $b_k \in \mathcal{S}$ for $k \ge 1$ such that, for each $k$, we have $\Sigma^n(a)=\Sigma^{n+k}(b_k)$ for some $n$.
The stabilisations in (I)-(III) each satisfy this condition (see \cref{section:unstable}).
The function $\Sigma$ is then fixed point free and we have a \textit{level function} $\ell : \mathcal{S} \to \Z_{\ge 0}$ which is surjective and satisfies $\ell(\Sigma(a)) = \ell(a)+1$ for $a \in \mathcal{S}$. For $n \ge 0$, we say that $a \in \mathcal{S}$ has \textit{level $n$} if $\ell(a)=n$ and is at the \textit{minimal level} if $\ell(a)=0$.

We can view $(\mathcal{S},\Sigma)$ as a directed graph with vertex set $\mathcal{S}$ and edges between $a$ and $\Sigma(a)$ for each $a \in \mathcal{S}$. Let $=_{\st}$ denote the corresponding (reduced) stable equivalence relation, i.e. $a =_{\st} b$ if $\Sigma^n(a) = \Sigma^m(b)$ for some $n,m \ge 0$. For each stable equivalence class $c \in \mathcal{S} \mst$, the subgraph $\mathcal{S}_c = \{ a \in \mathcal{S} : a =_{st} c\}$ is a directed tree which is graded by $\ell_c := \ell \mid_{\mathcal{S}_c} : \mathcal{S}_c \to \Z_{\ge 0}$.
The goal of the unstable classification is then to determine the trees $\mathcal{S}_c$ for each $c \in S \mst$.

Whilst determining the trees $\mathcal{S}_c$ explicitly will often be intractable, further progress can be made by asking for structural features of the trees.
For $k \ge 0$, we say that $\mathcal{S}_c$ has \textit{cancellation at level $k$} if $\Sigma(a)=\Sigma(b)$ implies $a=b$ for all $a,b \in \mathcal{S}_c$ with $\ell(a)=\ell(b) \ge k$, or equivalently if $|\ell_c^{-1}(n)|=1$ for all $n \ge k$ (see \cref{figure:diagrams}c). We say that $\mathcal{S}$ has cancellation at level $k$ if this holds for all $c \in \mathcal{S} \mst$. If such a $k$ exists, then we have a kind of `stable range' where elements $a \in \mathcal{S}$ with $\ell(a) \ge k$ are classified by $\ell(a) \in \Z_{\ge 0}$ and the stable class $c$.
We say that $\mathcal{S}_c$ (resp. $\mathcal{S}$) has \textit{cancellation} if it has cancellation at level $0$, or equivalently if $\ell_c$ (resp. $\ell$) is bijective (see \cref{figure:diagrams}a).

\begin{figure}[h] \vspace{-4mm} 
\begin{center}
\begin{tabular}{ccccccccc}
\begin{tabular}{l}
\begin{tikzpicture}
\draw[fill=black] (2,0) circle (2pt);
\draw[fill=black] (2,1) circle (2pt);
\draw[fill=black] (2,2) circle (2pt);
\draw[fill=black] (2,3) circle (2pt);

\node at (2,3.8) {$\vdots$};
\node at (1.2,3) {(a)};
\node at (3,0) { };

\draw[black] 
(-1.5,0) node[right]{$\ell = 0$}
(-1.5,1) node[right]{$\ell = 1$}
(-1.5,2) node[right]{$\ell = 2$}
(-1.5,3) node[right]{$\ell = 3$};
\node at (-1,3.8) {$\vdots$};

\draw[thick]  (2,0) -- (2,1) -- (2,2) -- (2,3);
\end{tikzpicture} 
\end{tabular}
&&
\begin{tabular}{l}
\begin{tikzpicture}
\draw[white] (1,0) node[right]{$\ell = 0$};

\draw[fill=black] (1,0) circle (2pt);
\draw[fill=black] (2,0) circle (2pt);
\draw[fill=black] (3,0) circle (2pt);
\draw[fill=black] (2,1) circle (2pt);
\draw[fill=black] (2,2) circle (2pt);
\draw[fill=black] (2,3) circle (2pt);

\node at (2,3.8) {$\vdots$};
\node at (1.2,3) {(b)};

\draw[thick] (1,0) -- (2,1) -- (3,0) (2,0) -- (2,1) -- (2,2) -- (2,3);
\end{tikzpicture} 
\end{tabular}
&&&&	
\begin{tabular}{l}
\begin{tikzpicture}
\draw[white] (1,0) node[right]{$\ell = 0$};

\draw[fill=black] (-0.25,0) circle (2pt);
\draw[fill=black] (0.5,0) circle (2pt);
\draw[fill=black] (2,0) circle (2pt);
\draw[fill=black] (2,1) circle (2pt);
\draw[fill=black] (2,2) circle (2pt);
\draw[fill=black] (2,3) circle (2pt);
\draw[fill=black] (1.25,0) circle (2pt);
\draw[fill=black] (1.25,1) circle (2pt);
\draw[fill=black] (0.5,1) circle (2pt);

\node at (1.2,3) {(c)};
\node at (2,3.8) {$\vdots$};
\draw[black] (2,1) node[right]{
$\left.
    \begin{array}{ll}
           \\ \\ \\
    \end{array}
\right \} k$};
\draw[thick] (-0.25,0) -- (0.5,1)--(2,2) (0.5,0) -- (0.5,1) (2,0) -- (2,1) 
(2,1) -- (2,2) -- (2,3) (1.25,0)--(2,1) (1.25,1)--(2,2);
\end{tikzpicture}
\end{tabular}
\end{tabular}
\end{center}
\caption{Branching phenomena for the graded trees $\mathcal{S}_c$. The vertical height is $\ell(\cdot) \in \Z_{\ge 0}$.} \label{figure:diagrams}
\vspace{-2mm}
\end{figure}

For a group $G$, let $P(\Z G)$, $\HT(G)$ and $\M(G)$ denote the sets with stabilisation described in (I), (II) and (III) respectively. 
Each of these sets has cancellation when $G=\{1\}$ is the trivial group. For $P(\Z)$, this is trivial. For $\HT(\{1\})$ this is an exercise in homotopy theory \cite[Theorem 1]{MW50}. 
For $\M(\{1\})$, this follows from deep results of Donaldson \cite{Do83} and Freedman \cite{Fr82} (see \cite[p14]{KS84}).

When $G$ is a finite group, $P(\Z G)$, $\HT(G)$ and $\M(G)$ each have cancellation at level one (see \cref{figure:diagrams}b).
This was shown for $P(\Z G)$ by Swan \cite{Sw60}, for $\HT(G)$ by Browning \cite{Br78} and for $\M(G)$ by Hambleton-Kreck \cite{HK93-II}. The three proofs share a number of techniques; 
in fact, Hambleton-Kreck gave a new proof of case (II) before using these techniques as the basis for their proof of case (III) (see \cite{HK93-I}).
On the other hand, examples have been given to illustrate that cancellation (at level 0) fails for an arbitrary finite group $G$. This was shown for $P(\Z Q_{32})$ by Swan \cite{Sw62}, where $Q_{32}$ is the quaternion group of order 32, by Swan \cite{Sw62}, for $\HT((\Z/5)^3)$ by Metzler \cite{Me76} and for $\M((\Z/5)^3)$ by Kreck-Schafer \cite{KS84} by applying the boundary of thickenings construction (see \cref{ss:intro-manifolds}) to the examples of Metzler.

Much less is known about the case where $G$ is infinite.
In particular, it has previously not been known whether there exists $G$ such that $P(\Z G)$ fails cancellation at any level $\ge 1$, $\HT(G)$ fails cancellations at any level $\ge 2$, or $\M(G)$ fails cancellation at any level $\ge 1$.
We will now discuss each case in turn.

\subsection{Projective $\Z G$-modules} \label{ss:intro-SF}

The \textit{rank} of a finitely generated projective $\Z G$-module $P$, denoted by $\rank_{\Z G}(P)$, is the rank of the free abelian group $P_G = \Z \otimes_{\Z G} P$. 
We will focus on the case of stably free $\Z G$-modules $S$ where $\rank_{\Z G}(S) = m-n$ for any $n,m \ge 0$ such that $S \oplus \Z G^n \cong \Z G^{m}$. Let $\SF(\Z G) \subseteq P(\Z G)$ denote the subset of the stably free $\Z G$-modules.
If $\ell = \ell_{\SF(\Z G)}$ is the level function on $P(\Z G)$ restricted to $\SF(\Z G)$, then $\ell(S) = \rank_{\Z G}(S)-1$ for $S \in \SF(\Z G)$ (\cref{prop:prelim-P(ZG)}).

\sss{Main result} \label{sss:main-modules}

Examples of non-free stably free $\Z G$-modules of rank one have been constructed over various torsion free groups \cite{Du72,BD79,Ar81,HJ06a} and groups of the form $G=F_n \times H$ for $H$ finite \cite{Jo12a, Ka10,Os12}.
However, there has previously been no known examples of (a) a non-free stably free $\Z G$-module of rank $\ge 2$ over a group $G$ (see \cite[p623]{Jo12b}, \cite[p.xiii]{Jo12a}), or (b) a non-free projective $\Z G$-module of rank $\ge 2$ over a torsion free group $G$ (see \cite[p2950]{AB89}).
For each $k \ge 2$, \cref{thm:simple-modules} is the statement that there exists $G$ for which $P(\Z G)$ fails cancellation at level $k$.
Since $G = \ast_{i=1}^k T$ is torsion free, this gives examples for both (a) and (b) above.

We now state a more detailed and general version of \cref{thm:simple-modules}, motivated by applications to 2-complexes in \cref{thm:main}.
Let $\cd(G)$ denote the cohomological dimension of $G$ and recall that $\Z G$-modules $M$, $N$ are \textit{$\Aut(G)$-isomorphic} if $M \cong N_\theta$ are $\Z G$-isomorphic for some $\theta \in \Aut(G)$, where $N_\theta$ is the abelian group $N$ equipped with the $G$-action $g \cdot x := \theta(g) \cdot_N x$ for $g \in G$ and $x \in N$.

\begingroup
\renewcommand\thethm{A$'$}
\begin{thm} \label{thm:main-SF}
For all $k \ge 1$ and $d \ge 2$, there exists a finitely presented group $G$ with $\cd(G) = d$ and infinitely many stably free $\Z G$-modules $S_i$ of rank $k$ which are distinct up to $\Aut(G)$-isomorphism and such that, for all $i$, $S_i \not \cong S \oplus \Z G$ for any $\Z G$-module $S$.
\end{thm}
\setcounter{thm}{\value{thm}-1}
\endgroup

When $d=2$, we take $G = \ast_{i=1}^k T$ where $T$ is the trefoil group. Here the case $k=1$ was shown by Berridge-Dunwoody \cite{BD79}. For $d \ge 3$, the groups are constructed by applying the operation
\[ G \rightsquigarrow (G \ast \langle q \mid - \rangle) \ast_{\langle q=r^2 \rangle} \langle r \mid - \rangle.\]
to $\ast_{i=1}^k T$ a total of $(d-2)$ times.

We will now explain the key idea behind our proof in the $d=2$ case, i.e. when $G = \ast_{i=1}^k T$.
Let $\F$ be a field and let $G = \ast_{i=1}^k G_i$ be a free product of group. We say that an $\F G$-module $M$ is \textit{induced} if there exists $\F G_i$-modules $M_i$ and an $\F G$-module isomorphism
\[ M \cong {\iota_1}_\#(M_1) \oplus \cdots \oplus {\iota_k}_\#(M_k)\]
where ${\iota_i}_\#(M_i) = \F G \otimes_{\F G_i}  M_i$ is extension of scalars for the inclusion 
$\iota_i : G_i \hookrightarrow G$.
It follows from Bergman's theorem on modules over coproducts of rings 
\cite{Be74} that, if $M$ has no $\F G$ summand, then $M$ is uniquely determined by the $M_i$ (\cref{cor:Bergman}). 

For $G = \ast_{i=1}^k T$, we will show that the $\Z G$-modules $S_i$ of \cref{thm:main-SF} are distinct by showing that the corresponding $\F_p[\ast_{i=1}^k T/T'']$-modules $\F_p[\ast_{i=1}^k T/T''] \otimes_{\Z G} S_i$ are distinct, where $\F_p$ is the finite field of characteristic $p$ and $T''$ is the second derived subgroup. This is achieved using Bergman's theorem. To show that the modules have no $\F_p[\ast_{i=1}^k T/T'']$ summand, we use that the group ring $\F_p[T/T'']$ is stably finite (see \cref{subsection:stably-finite}) since $T/T''$ is polycyclic and so is a sofic group \cite{ES04}.
This strategy was proposed by Evans in \cite{Ev99}, though an example was never given.

\sss{Cancellation bounds}

If $\Z G$ is Noetherian of Krull dimension $d_G$ and $d = d_G +1$, then stably free $\Z G$-modules of rank $\ge d$ are free \cite[Chapter IV]{Ba68}.
If $G$ is polycyclic-by-finite, then $\Z G$ is Noetherian and it is conjectured that these are the only such groups (see \cite[p328]{KL19}). 
If $\Z G$ is not Noetherian, then such a bound $d$ can often still be found; for example, if $G$ is a free group, then we can take $d=0$ \cite{Ba64}. This raises the question of whether, given a group $G$, there always exist a bound $d$ such that every stably free $\Z G$-module of rank $\ge d$ is free. We will show that there does not.

\begin{thmx} \label{thm:main-SF-further}
There exists a group $G$ such that, for all $k \ge 1$, there is a non-free stably free $\Z G$-module of rank $k$.
\end{thmx}

We construct examples over the non-finitely generated group $G= \ast_{i=1}^\infty T$. 

\subsection{Finite $2$-complexes} \label{ss:intro-complexes}

A \textit{finite $2$-complex} will be taken to mean a connected finite 2-dimensional CW-complex.
Let $G$ be finitely presented and let $\ell = \ell_{\HT(G)}$. Then $\ell(X) = \chi(X) -\chi_{\min}(G)$ for all $X \in \HT(G)$, where $\chi_{\min}(G):= \min\{\chi(X) : X \in \HT(G)\}$ (\cref{prop:prelim-HT(G)}).

Recall that a finite presentation $\mathcal{P}$ for a group $G$ has an associated \textit{presentation complex} $X_{\mathcal{P}}$ which is a finite 2-complex with $\pi_1(X_{\mathcal{P}})\cong G$. Conversely, every finite 2-complex $X$ with $\pi_1(X) \cong G$ is homotopy equivalent to $X_{\mathcal{P}}$ for some finite presentation $\mathcal{P}$ for $G$ (see, for example, \cite[p61]{HMS93}).

\sss{Stably free modules and $2$-complexes}

We will now discuss the link between stably free $\Z G$-modules and finite 2-complexes. This is the basis for our strategy for proving \cref{thm:main} (see below).

If $X$ is a finite 2-complex with $\pi_1(X) \cong G$, then $\pi_2(X)$ can be viewed as a $\Z G$-module via the isomorphism $\pi_2(X) \cong \pi_2(\wt X)$ and the monodromy action. This has the property that $\pi_2(X \vee S^2) \cong \pi_2(X) \oplus \Z G$ as $\Z G$-modules.
If $\gd(G)=2$, then $\pi_2(X)$ is a stably free $\Z G$-module of rank $\ell(X) \in \Z_{\ge 0}$ (\cref{prop:syzygies-cd-finite}).
In particular, there is a level preserving map
\[\pi_2 : \HT(G) \to \SF(\Z G) \cup \{0\}\]
and $\ell_{\SF(\Z G)}(\pi_2(X)) = \ell_{\HT(G)}(X)-1$, where we take $\ell_{\SF(\Z G)}(\{0\}):=-1$. The trefoil group $T$ has $\gd(T)=2$ and it was shown by Dunwoody \cite{Du76} that $\HT(T)$ fails cancellation at level one using a non-free stably free $\Z T$-module of rank one \cite{Du72}.

\sss{Main result}

The question of whether, for each $k \ge 2$, there exist a group $G$ such that $\HT(G)$ fails cancellation at level $k$ has been raised in a number of variants (see \cite[Problem C]{Dy79a} and \cite[p124]{HMS93}).
Most notably, the following version appeared in the 1979 Problems List of C. T. C. Wall \cite{Wa79}. If such an $X$ exists then, if $X_0$ is a finite 2-complex such that $\pi_1(X_0) \cong \pi_1(X)$ and $\ell(X_0)=0$, then $X$ and $X_0 \vee kS^2$ are homotopy distinct finite 2-complexes at level $k$ in $\HT(\pi_1(X))$.

\begin{problem}[Problem D5 from Wall's list \cite{Wa79}]
\label{problem:2-complexes}
For each $k \ge 2$, does there exists a finite $2$-complex $X$ such that $\ell(X) = k$ and $X \not \simeq Y \vee S^2$ for any finite $2$-complex $Y$?
\end{problem}

\begin{remark}
Wall's list contains eight problems concerning 2-complexes \cite[List D]{Wa79}. Some are classical, some are due to Wall and others were suggested by participants at the 1977 Durham Symposium on Homological Group Theory.
Each problem asks whether examples exist which illustrate certain phenomena. The only examples previously found were the finite 2-complexes of Metzler \cite{Me90} (see also \cite{Lu91}) which are homotopy equivalent but not simple homotopy equivalent, resolving Problem D6. Bestvina-Brady \cite{BB97} showed that there is a counterexample to the Eilenberg-Ganea conjecture (Problem D4) or the Whitehead conjecture (Problem D7).
\end{remark}

Our main result is an affirmative answer to this question for all $k \ge 2$. We will also pursue the following natural generalisation to higher dimensions.
For $n \ge 2$, a \textit{$(G,n)$-complex} is an $n$-complex $X$ with $\pi_1(X) \cong G$ and such that the universal cover $\wt X$ is $(n-1)$-connected. Equivalently, $X$ is the $n$-skeleton of a $K(G,1)$. 
Let $\HT(G,n)$ denote the set of finite $(G,n)$-complexes up to homotopy. Then $X \rightsquigarrow X \vee S^n$ is a stabilisation (\cref{prop:prelim-HT(G)}).

The natural extension of \cref{problem:2-complexes} to $(G,n)$-complexes was considered by Dyer in \cite{Dy79a,Dy79b} (see \cite[p378]{Dy79b}). However, there were still no examples found at level $k \ge 2$. We will show:

\begingroup
\renewcommand\thethm{B$'$}
\begin{thm} \label{thm:main}
For all $n \ge 2$ and $k \ge 0$, there exists a group $G$ and  infinitely many homotopically distinct finite $(G,n)$-complexes $X_i$ at level $k$ such that $X_i \not \simeq Y \vee S^n$ for any finite $(G,n)$-complex $Y$.
\end{thm}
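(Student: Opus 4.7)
The plan is to derive Theorem B from Theorem A by realising the stably free $\Z G$-modules produced algebraically as $\pi_n$ of finite $(G,n)$-complexes and distinguishing those complexes via the $\Aut(G)$-module structure on $\pi_n$. Begin by applying Theorem A with $d = 2$, obtaining a finitely presented group $G$ with $\cd(G) = 2$ together with an infinite family $\{S_i\}_{i \ge 1}$ of rank-$k$ stably free $\Z G$-modules, pairwise non-isomorphic as $\Aut(G)$-modules. After possibly passing to an infinite subfamily, I would also arrange that no $S_i$ has $\Z G$ as a direct summand.

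For each $i$, I would construct a finite $(G,n)$-complex $X_i$ with $\chi(X_i) = \chi_{\min}(G,n) + k$ whose top homotopy module realises $S_i$. In the case $n = 2$, start from a minimal aspherical presentation complex $X_0$ of $G$ (so $\pi_2(X_0) = 0$), fix a short exact sequence $0 \to S_i \to \Z G^{a+k} \to \Z G^a \to 0$ coming from the stably-free isomorphism, and augment $X_0$ by $a$ new $1$-cells and $a+k$ new $2$-cells whose attaching words are supported on the new generators and realise the given surjection as the new part of the chain differential on the universal cover; a direct chain-complex computation then gives $\pi_2(X_i) \cong S_i$ while keeping $\pi_1 = G$ and $\chi$ correct. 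In the case $n \ge 3$, pad the length-$2$ resolution of $\Z$ up to length $n$ by identity summands, replace the top free module by its direct sum with $S_i$, and invoke Wall's realisation theorem to obtain $X_i$ geometrically from this algebraic $(G,n)$-complex.

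The homotopy distinction and the wedge-indecomposability both reduce to algebraic statements about $S_i$. A homotopy equivalence $X_i \simeq X_j$ induces an $\Aut(G)$-equivariant isomorphism on $\pi_n$, from which a chain-complex comparison extracts an $\Aut(G)$-isomorphism $S_i \cong S_j$, forcing $i = j$ by Theorem A. Any splitting $X_i \simeq Y \vee S^n$ with $Y$ a finite $(G,n)$-complex would yield $\pi_n(X_i) \cong \pi_n(Y) \oplus \Z G$ and hence force $\Z G$ to be a direct summand of $S_i$, contradicting the choice of subfamily. The main obstacle I anticipate is the case $n = 2$, where no general realisation theorem is available (this is essentially the open D(2)-problem), so the $X_i$ must be constructed by hand via the augmentation procedure above, with careful control of $\pi_1$ throughout. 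A secondary technical point is the selection of an infinite subfamily of $S_i$ without a $\Z G$ summand, which I would expect to follow either from a Krull--Schmidt-style argument in the stably-free class group or directly from indecomposability properties of the modules built via Bergman's theorem in the proof of Theorem A.
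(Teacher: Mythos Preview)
Your outline has two genuine gaps.

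\textbf{The case $k=0$ is not covered.} Theorem~A is stated only for $k \ge 1$, and a rank-$0$ stably free $\Z G$-module is zero (by stable finiteness of $\Z G$), so your scheme produces nothing at the minimal Euler characteristic. The paper treats $k=0$ by an entirely different mechanism: it takes $G = T_{(n)}$ with $\cd(G) = n+1$ (so $\pi_n(X)$ is \emph{not} stably free and the minimal rank in $\Omega_{n+1}(\Z)$ is $1$, not $0$), builds explicit algebraic $n$-complexes $E_n^{(i)}$ by modifying two rows of a boundary matrix, and distinguishes the resulting $(G,n)$-complexes via the invariant $H_n(X;\Z T)$ pulled back along the characteristic quotient $T_{(n)} \twoheadrightarrow T$. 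None of this is visible from the stably-free viewpoint.

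\textbf{The $n=2$ realisation step is not justified.} You propose to augment an aspherical $X_0$ by $a$ new $1$-cells and $a+k$ new $2$-cells ``supported on the new generators'' realising a prescribed surjection $\Z G^{a+k} \to \Z G^a$. If the attaching words genuinely lie in $F_a$, their Fox derivatives over $\Z G$ are integers, so only integer matrices arise. If instead you allow words in the normal closure $N = \langle\!\langle t_1,\dots,t_a\rangle\!\rangle \le G \ast F_a$, you can hit any $\Z G$-matrix, but then you must show that your chosen relators normally generate $N$ (else $\pi_1$ changes); knowing only that their images generate $N^{\mathrm{ab}} \cong \Z G^a$ leaves $N/\langle\!\langle r_j\rangle\!\rangle$ a possibly nontrivial perfect group. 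This is precisely the relation-gap obstruction you flag as ``essentially the open D(2)-problem'', and your sketch does not circumvent it. The paper avoids the issue entirely: for $G = \ast_{i=1}^k T$ it invokes the explicit Harlander--Jensen presentations $\mathcal{P}_i$ of $T$, for which $\pi_2(X_{\mathcal{P}_i}) \cong S_i$ is already established, and then takes wedges $\bigvee_j X_{\mathcal{P}_{i}}$ and reads off $\pi_2$ via \cref{cor:pi_2-of-wedge}.

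For $n \ge 3$ and $k \ge 1$ your plan is sound and close to the paper's, the only difference being that the paper takes $G$ with $\cd(G) = n$ (via the $(\,\cdot\,)_{(n-1)}$ construction) rather than fixing $\cd(G)=2$; either choice makes $\pi_n$ a bijection onto stably free modules, so this is cosmetic. The ``no $\Z G$ summand'' property you defer to a Krull--Schmidt argument is in fact proved directly in the paper from Bergman's theorem (\cref{thm:main-SF-detailed}(ii)), so your instinct there is correct.
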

\setcounter{thm}{\value{thm}-1}
\endgroup

For $n = 2$ and $k \ge 1$, we take $G = \ast_{i=1}^k T$ and $X_i = \bigvee_{j=1}^k X_{\mathcal{P}_i}$ for $i \ge 1$ where
\[ \mathcal{P}_i = \langle x,y,a,b \mid x^2=y^3, a^2=b^3, x^{2i+1}=a^{2i+1}, y^{3i+1}=b^{3i+1} \rangle \]
are the presentations of Harlander-Jensen \cite{HJ06a}. We prove the $X_i$ are homotopically distinct by showing that the $\Z G$-modules $\pi_2(X_i)$, which are stably free, coincide with the examples behind \cref{thm:main-SF}.
For $k=0$, we take $G=(T \ast \langle q \mid - \rangle) \ast_{\langle q=r^2 \rangle} \langle r \mid - \rangle$ and $X_i = X_{\mathcal{Q}_i}$ where
\[ \mathcal{Q}_i = \langle a, b, c \mid a^2=b^3, [a^2,b^{2i+1}], [a^2,c^{3i+1}] \rangle \]
for the group  constructed by Lustig \cite{Lu93}.
For $n \ge 3$ and $k \ge 1$, the complexes are constructed from \cref{thm:main-SF} using Wall's theorem on the realisability of chain complexes in dimensions $\ge 3$ (see \cref{prop:realisation-thm}). The case $k=0$ requires a generalisation of the examples of Lustig (see \cref{subsection:min-EC}).

In \cref{cor:syzygies}, we will point out that \cref{thm:main} shows that, for each $n \ge 2$ and $k \ge 2$, there exists a finitely presented group $G$ such that the syzygies $\Omega_n^G(\Z)$ have non-cancellation at level $k \ge 0$. This resolves a problem raised by Johnson in \cite[p.xiii]{Jo12a}. 

\sss{Homotopy classification over free products}

The key ingredient in the proofs of Theorems \ref{thm:main-SF} and \ref{thm:main} is Bergman's theorem on $\F G$-modules for a field $\F$ and a group $G = \ast_{i=1}^k G_i$ (see \cref{ss:intro-SF}).
For a finite 2-complex $X$ with $\pi_1(X) \cong G$, this can be applied to determine the structure of $\pi_2(X) \otimes \F$ as an $\F G$-module (see Propositions \ref{prop:existence} and \ref{prop:uniqueness}).
This raises the question of whether a general classification of $P(\Z G)$ and $\HT(G)$ can be obtained for $G = \ast_{i=1}^k G_i$.
We will show that it cannot since, in general, we lose information by passing from $\Z G$-modules to $\F G$-modules. 

\begin{thm} \label{thm:main-free-product}
Let $k \ge 2$. Then:
\begin{clist}{(i)}
\item
There exists a group $G = \ast_{i=1}^k G_i$ and a finite $2$-complex $X$ with $\pi_1(X) \cong G$ such that
$\pi_2(X)$ is not an induced $\Z G$-module.
\item
There exists a group $G = \ast_{i=1}^k G_i$ and a finite $2$-complex $X$ with $\pi_1(X) \cong G$ such that
$\pi_2(X)$ has two induced module structures
\[ \pi_2(X) \cong {\iota_1}_\#(M_1) \oplus \cdots \oplus {\iota_k}_\#(M_k) \cong {\iota_1}_\#(M_1') \oplus \cdots \oplus {\iota_k}_\#(M_k') \]
such that, for each $i$, $M_i$ and $M_i'$ have no $\Z G_i$ summands and are not $\Aut(G_i)$-isomorphic.
\end{clist}
\end{thm}

For (i), we take $G = \ast_{i=1}^k (\Z/p_i)^2$ for distinct primes $p_i$ and our results are a minor extension of those of Hog-Angeloni--Lustig--Metzler \cite{HLM85}. For (ii), we take $G = \ast_{i=1}^k (\Z/p_i)^3$ for distinct primes $p_i$ such that $p_i \equiv 1 \mod 4$. These examples combine ideas from \cite{HLM85} with those of Metzler \cite{Me76}.

\subsection{Smooth $4$-manifolds} \label{ss:intro-manifolds}

A \textit{$4$-manifold} will be assumed to be closed, smooth and connected. Alongside $\M(G)$, we can also consider the set $\M^{\Diff}(G)$ of 4-manifolds $M$ with $\pi_1(M) \cong G$ up to diffeomorphism. 
Whilst $\M(\{1\})$ has cancellation, the existence of exotic smooth structures on simply connected 4-manifolds shows that $\M^{\Diff}(\{1\})$ fails cancellation. There are only a few examples where cancellation is known to fail for $\M(G)$ (see \cite[Sections 5.(4,7,10)]{KPR22}). 
On the other hand, for each $k \ge 1$, it is currently open whether there exists $G$ such that either $\M(G)$ or $\M^{\Diff}(G)$ fail cancellation at level $k$. The question for $\M^{\Diff}(G)$ was considered by Kreck \cite[p198]{Kr16} and Crowley \cite[Problem 10B]{BCD21}. 

The following is a potential application of stably free $\Z G$-modules to the unstable classification of 4-manifolds.
A $\Z G$-module $S$ is said to be \textit{geometrically realisable} if there exists a finite $2$-complex $X$ such that $\pi_1(X) \cong G$ and $\pi_2(X) \cong S$. Let $S^* = \Hom_{\Z G}(S,\Z G)$ denote the dual (see \cref{ss:duals}).

\begin{thm} \label{thm:S+S*-intro}
Let $G$ be a finitely presented group such that $\gd(G)=2$ and suppose there exists a stably free $\Z G$-module $S$ of rank $k$ which is geometrically realisable and such that $S \oplus S^*$ is not a free $\Z G$-module.
Then both $\M^{\Diff}(G)$ and $\M(G)$ fail cancellation at level $k$. 
\end{thm}

\begin{remark}
If $G$ is a finitely presented group such that $\gd(G)=2$, then every stably free $\Z G$-module is geometrically realisable if and only if $G$ has the D2 property (see \cref{section:Gn-complexes}). In particular, if a stably free $\Z G$-module $S$ of rank $k$ exists such that $S \oplus S^*$ is free, then either $G$ is a counterexample to Wall's D2 problem or both $\M^{\Diff}(G)$ and $\M(G)$ fail cancellation at level $k$.
\end{remark}

We do not know whether or not a stably free $\Z G$-module exists which satisfies these conditions, or even whether there exists any group $G$ and a stably free $\Z G$-module $S$ such that $S \oplus S^*$ is non-free (see \cref{problem:S+S*}).
Such examples would also provide further examples for \cref{thm:simple-modules} since $S \oplus S^*$ would be a stably free $\Z G$-module of even rank $\ge 2$. 
The natural candidates to investigate are the examples of Berridge-Dunwoody (\cref{thm:BD}) which are geometrically realisable by Harlander-Jensen (\cref{thm:HJ}).
The proof of \cref{thm:S+S*-intro} uses the boundary of thickenings construction which, given a finite 2-complex $X$, assigns a closed smooth 4-manifold $M(X)$ given by the boundary of a smooth regular neighbourhood of an embedding $i : X \hookrightarrow \R^5$ (see \cref{ss:doubles}).

\subsection*{Organisation of the paper}

The paper is organised as follows. In \cref{section:unstable}, we will fill in further details on unstable classification, building upon \cref{ss:intro-unstable}. 
Sections \ref{section:RG-modules}-\ref{section:proof-main-algebra} will be devoted to the proof of \cref{thm:main-SF}, Sections \ref{section:module-invariants}-\ref{section:proof-main-topological} to the proof of \cref{thm:main} and \cref{section:proof-main-further} to the proof of \cref{thm:main-SF-further}. 
In \cref{section:induced}, we will prove \cref{thm:main-free-product}. In \cref{section:4-manifolds} we will explore applications to 4-manifolds, culminating in a proof of \cref{thm:S+S*-intro}.
Finally, in \cref{section:problems}, we will propose a number of directions in the study of $P(\Z G)$, $\HT(G)$ and $\M(G)$. 
In particular, we pose Problems \ref{problem:proj-rank=0}-\ref{problem:proj-t-free} concerning projective $\Z G$-modules and Problems \ref{problem:complexes-irred}-\ref{problem:complexes-bound} concerning finite 2-complexes.
We hope that Theorems \ref{thm:simple-modules}, \ref{thm:simple-complexes} and \ref{thm:main-SF-further} go some way towards convincing the reader that progress on these problems is possible.

\subsection*{Acknowledgements}

I would like to thank Martin Dunwoody, Jens Harlander, F. E. A. Johnson and Mark Powell for useful correspondence and a number of helpful comments.
I would also like to thank the referee for many useful suggestions which improved the exposition of this article.
This work was supported by EPSRC grant EP/N509577/1, the Heilbronn Institute for Mathematical Research and a Rankin-Sneddon Research Fellowship from the University of Glasgow.

\section{Preliminaries on $P(\Z G)$, $\HT(G,n)$ and $\M(G)$}
\label{section:unstable}

The aim of this section will be to establish basic properties of the set $P(\Z G)$ of $\Z G$-isomorphism classes of finitely generated non-zero projective $\Z G$-modules, the set $\HT(G,n)$ of homotopy types of finite $(G,n)$-complexes, and the set $\M(G)$ of homeomorphism classes of closed smooth $4$-manifolds $M$ with $\pi_1(M) \cong G$.

Recall from the introduction that, for a set $\mathcal{S}$, a stabilisation is a function $\Sigma : \mathcal{S} \to \mathcal{S}$ such that $\ell(a) := \sup\{k \ge 0: \Sigma^n(a) = \Sigma^{n+k}(b) \text{ for some $b \in \mathcal{S}$, $n \ge 0$}\} < \infty$ where $n, k$ are integers. 
We then have a level function $\ell : \mathcal{S} \to \Z_{\ge 0}$ which is surjective and satisfies $\ell(\Sigma(a)) = \ell(a)+1$ for $a \in \mathcal{S}$. 
We will write $\ell = \ell_{\mathcal{S}}$ when we want to emphasise the set $\mathcal{S}$.
We let $=_{\st}$ denote the corresponding (reduced) stable equivalence relation, i.e. $a =_{\st} b$ if $\Sigma^n(a) = \Sigma^m(b)$ for some $n,m \ge 0$. 

Recall that the rank of a finitely generated projective $\Z G$-module $P$, denoted by $\rank_{\Z G}(P)$, is the rank of the free abelian group $P_G = \Z \otimes_{\Z G} P$. 

\begin{prop} \label{prop:prelim-P(ZG)}
Let $G$ be a group and let $\ell = \ell_{P(\Z G)}$. Then:
\begin{clist}{(i)}
\item
$\Sigma : P(\Z G) \to P(\Z G)$, $P \mapsto P \oplus \Z G$ is a stabilisation, i.e. $\ell(P) < \infty$ for all $P \in P(\Z G)$.
\item
For each $P \in P(\Z G)$, we have $\ell(P) = \rank_{\Z G}(P) - \rank_{\min}([P])$ and, for each $c \in \wt K_0(\Z G)$, 
\[ \rank_{\min}(c) := \min\{\rank_{\Z G}(P_0) : P_0 \in P(\Z G), \, [P_0]=c \in \wt K_0(\Z G) \}\]
is the minimal rank of a projective $\Z G$-module in the class $c$.	
\end{clist}
\end{prop}

\begin{remark}
If $c=0$, then $\rank_{\min}(c)=1$ and so $\ell(P) = \rank_{\Z G}(P)-1$ for $P$ a stably free $\Z G$-module. It is not known whether $\rank_{\min}(c)=1$ for all groups $G$ and all $c \in \wt K_0(\Z G)$ (see \cref{problem:min-rank-in-c}).
\end{remark}

\begin{proof}
(i) Recall that, for $P \in P(\Z G)$, we defined:
\[ \ell(P) := \sup \{ k \ge 0 : P \oplus \Z G^r \cong Q \oplus \Z G^{r+k}, \text{ for some $Q \in P(\Z G)$, $r \ge 0$} \}. \]
If $P \oplus \Z G^r \cong Q \oplus \Z G^{r+k}$, then $\rank_{\Z G}(P) = \rank_{\Z G}(Q) + k$ and so $k \le \rank_{\Z G}(P)$. In particular, we have $\ell(P) \le \rank_{\Z G}(P) < \infty$.

(ii)
Let $P \in P(\Z G)$. Since $\rank_{\Z G}(P \oplus \Z G) = \rank_{\Z G}(P)+1$, we have that $\ell(P) - \rank_{\Z G}(P)$ is invariant under the operation $P \mapsto P \oplus \Z G$ and so is a function of the class $c = [P] \in \wt K_0(\Z G)$. Let $P_0 \in P(\Z G)$ be such that $[P_0]=c$ and $\rank_{\Z G}(P_0) = \rank_{\min}(c)$, where the minimal value exists since it is bounded below by $0$. Since $\rank_{\Z G}(\cdot)$ is minimal at $P_0$, and $\ell(\cdot) - \rank_{\Z G}(\cdot)$ is constant on $c$, it follows that $\ell(\cdot)$ is minimal at $P_0$, i.e. $\ell(P_0)=0$. Hence $\ell(P)-\rank_{\Z G}(P) = -\rank_{\Z G}(P_0)=-\rank_{\min}(c)$.
\end{proof}

For $n \ge 2$ and $X$ a finite $(G,n)$-complex, define the \textit{directed Euler characteristic} $\bs \chi (X) = (-1)^n \chi(X)$.

\begin{prop} \label{prop:prelim-HT(G)}
Let $n \ge 2$, let $G$ be a group of type $F_n$ and let $\ell = \ell_{\HT(G,n)}$. Then:
\begin{clist}{(i)}
\item
$\Sigma : \HT(G,n) \to \HT(G,n)$, $X \mapsto X \vee S^n$ is a stabilisation, i.e. $\ell(X) < \infty$ for all $X \in \HT(G,n)$.
\item
For each $X \in \HT(G,n)$, we have $\ell(X) = \bs \chi(X) - \bs \chi_{\min}(G,n)$ where 
\[ \bs \chi_{\min}(G,n) := \min\{\bs \chi(X) : X \in \HT(G,n)\} \]
is the minimal directed Euler characteristic in $\HT(G,n)$, which always exists.
\end{clist}
\end{prop}

\begin{proof}
(i) Recall that, for $X \in \HT(G,n)$, we defined:
\[ \ell(X) := \sup \{ k \ge 0 : X \vee r S^n \simeq Y \vee (r+k) S^n, \text{ for some $Y \in \HT(G,n)$, $r \ge 0$} \}. \]
If $X \vee r S^n \simeq Y \vee (r+k) S^n$, then $\rank_{\Z}(H_n(X)) = \rank_{\Z}(H_n(Y))+k$ and so $k \le \rank_{\Z}(H_n(X))$.

(ii) This is analogous to the proof of \cref{prop:prelim-P(ZG)} (ii) and so will be omitted for brevity.
\end{proof}

\begin{prop} \label{prop:prelim-M(G)}
Let $G$ be a finitely presented group and let $\ell = \ell_{\M(G)}$. Then:
\begin{clist}{(i)}
\item
$\Sigma : \M(G) \to \M(G)$, $M \mapsto M \# (S^2 \times S^2)$ is a stabilisation, i.e. $\ell(M) < \infty$ for all $M \in \M(G)$.
\item
For each $M \in \M(G)$, we have $\ell(M) = \frac{1}{2}(\chi(M) - \chi_{\min}([M]))$ where, for $c \in \M(G)/\cong_{\st}$, 
\[ \chi_{\min}(c) := \min\{\chi(M) : M \in c\} \]
is the minimal Euler characteristic of a $4$-manifold in $c$, which always exists.
\end{clist}
\end{prop}

\begin{remark}
The same holds with $\M(G)$ replaced by $\M^{\Diff}(G)$.	
\end{remark}

\begin{proof}
(i) Recall that, for $M \in \M(G)$, we defined:
\[ \ell(M) := \sup \{ k \ge 0 : M \# r (S^2 \times S^2) \cong N \# (r+k)(S^2 \times S^2), \text{ for some $N \in \M(G)$, $r \ge 0$} \}. \]
If $M \# r (S^2 \times S^2) \cong N \# (r+k)(S^2 \times S^2)$, then $\rank_{\Z}(H_2(M)) = \rank_{\Z}(H_2(N))+2k$ . This implies that $k \le \frac{1}{2} \rank_{\Z}(H_n(M))$.

(ii) This is analogous to the proof of \cref{prop:prelim-P(ZG)} (ii) and so will be omitted for brevity.
\end{proof}

\section{Preliminaries on $R G$-modules}
\label{section:RG-modules}

Let $G$ be a group, let $R$ be a ring and let $R G$ denote the group ring of $G$ with coefficients in $R$. We will now develop the necessary preliminaries on $R G$-modules.

\subsection{Stably free $R G$-modules}
\label{subsection:stably-finite}

For a ring $R$, a finitely generated (left) $R$-module $S$ is \textit{stably free} if there exists $n$, $m$ such that $S \oplus R^n \cong R^m$.
In order to have a well-defined notion of rank, certain conditions on $R$ must be imposed:
\begin{clist}{(I)}
\item For all $n, m$, $R^n \cong R^m$ implies $n=m$ (\textit{invariant basis number property})
\item For all $n, m$, $S \oplus R^n \cong R^m$ implies $n \le m$ (\textit{surjective rank property})
\item For all $n$, $S \oplus R^n \cong R^n$ implies $S=0$ (\textit{stable finiteness property})
\end{clist}
Suppose $R$ satisfies (I). If $S$ is a stably free $R$-module, then we can define the \textit{rank} of $S$ to be $\rank (S) = m-n$ for any $n$, $m$ such that $S \oplus R^n \cong R^m$. If $R$ satisfies (II), then $\rank(S) \ge 0$ for all $S$. If $R$ satisfies (III), then $S \ne 0$ implies that $\rank(S) \ge 1$.

It is straightforward to see that (III) $\Rightarrow$ (II) $\Rightarrow$ (I). Conversely, examples were given by Cohn \cite{Co66} to show that $(R \ne 0)$ $\not\Rightarrow$ (I) $\not\Rightarrow$ (II) $\not\Rightarrow$ (III). Rings which satisfy (III) are also known as weakly finite and satisfy the equivalent condition that, for all $n$, one-sided inverses in $M_n(R)$ are two-sided, i.e. $uv=1$ if and only if $vu=1$.

We would now like to determine when conditions (I)-(III) hold for $R G$.
The following is a consequence of \cite[Proposition 2.4, Theorem 2.6]{Co66}.

\begin{prop}
Let $R$ be a commutative ring and let $G$ be a group. Then $R G$ has the surjective rank property, and hence also the invariant basis number property.
\end{prop}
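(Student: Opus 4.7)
The plan is to reduce from the non-commutative ring $RG$ to a field by composing the augmentation $\varepsilon \colon RG \to R$, $\sum r_g g \mapsto \sum r_g$, with the quotient $R \to R/\mathfrak{m}$ for some maximal ideal $\mathfrak{m}$ of $R$. Vector-space dimension will then settle everything. If $R = 0$ the claim is trivial, so I may assume $R \ne 0$, in which case such an $\mathfrak{m}$ exists and $k := R/\mathfrak{m}$ is a field.

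Next, I would view $k$ as a right $RG$-module via the surjective ring homomorphism $\pi \colon RG \xrightarrow{\varepsilon} R \to k$, so that $k \otimes_{RG} -$ is an additive functor from left $RG$-modules to $k$-vector spaces. The key identity $k \otimes_{RG} RG \cong k$ (together with additivity) gives $k \otimes_{RG} (RG)^j \cong k^j$ for every $j \ge 0$. Now, given an isomorphism $S \oplus (RG)^n \cong (RG)^m$ of left $RG$-modules, applying $k \otimes_{RG} -$ yields an isomorphism $\overline{S} \oplus k^n \cong k^m$ of $k$-vector spaces, where $\overline{S} := k \otimes_{RG} S$. Counting dimensions gives $n + \dim_k \overline{S} = m$, and since $\dim_k \overline{S} \ge 0$, we conclude $n \le m$. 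This is precisely the surjective rank property for $RG$.

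Finally, the invariant basis number property follows formally from the surjective rank property: if $(RG)^n \cong (RG)^m$, take $S = 0$ in the definition to obtain both $n \le m$ and, symmetrically, $m \le n$, hence $n = m$.

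I do not anticipate a serious obstacle here; the argument is a standard base-change reduction, and the only point that requires (trivial) verification is that the augmentation really is a ring homomorphism (which uses commutativity of $R$ to ensure $\varepsilon$ respects multiplication, since $\varepsilon(gh) = 1 = \varepsilon(g)\varepsilon(h)$ on basis elements extends $R$-bilinearly without needing to commute group elements past scalars).
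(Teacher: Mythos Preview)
Your argument is correct. The paper does not supply its own proof of this proposition; it simply records it as a consequence of \cite[Proposition 2.4, Theorem 2.6]{Co66}. Your base-change reduction to a field via $RG \xrightarrow{\varepsilon} R \to R/\mathfrak{m}$ is the standard self-contained route and is in spirit exactly what lies behind Cohn's results (a ring admitting a homomorphism to a ring with the surjective rank property inherits it, and fields certainly have it).

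One minor correction to your commentary: the augmentation $\varepsilon \colon RG \to R$ is a ring homomorphism for \emph{any} ring $R$, commutative or not. The place where commutativity of $R$ is genuinely used is in guaranteeing that the quotient $R/\mathfrak{m}$ by a maximal ideal is a field (rather than merely a simple ring), so that dimension-counting is available.
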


It remains to determine when $R G$ is stably finite.
It was shown by Kaplansky \cite{Ka72} that, if $\F$ is a field of characteristic $0$, then $\F G$ is stably finite for all groups $G$. This implies that $\Z G$ is stably finite since $\Z G \subseteq \Q G$.
Kaplansky conjectured that this holds for all fields $\F$, but this remains open.

The best result for general fields $\F$ is the following theorem of Elek-Szab\'{o} \cite{ES04}, which built upon earlier work of Ara, O'Meara and Perera \cite[Theorem 3.4]{AOP02}. 

\begin{thm} \label{thm:ES}
Let $\F$ be a field and let $G$ be a sofic group. Then $\F G$ is stably finite.
\end{thm}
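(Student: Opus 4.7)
The plan is to follow Elek--Szab\'{o}'s strategy of embedding $\F G$ into a rank-metric ultraproduct of finite-dimensional matrix algebras over $\F$, where stable finiteness can be verified by a direct rank calculation.

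First, I would unpack the sofic hypothesis: for each finite $F \subseteq G$ and $\varepsilon > 0$, there exists a map $\varphi \colon G \to \Sym(n)$ that is almost multiplicative and almost free on $F$ in the normalized Hamming metric $d_H$ on $\Sym(n)$. Composing with the permutation representation $\Sym(n) \hookrightarrow M_n(\F)$ gives $\psi \colon G \to M_n(\F)$, and the key observation is that $\rk(P_\sigma - P_\tau)/n \le 2\, d_H(\sigma, \tau)$, translating the Hamming metric into the normalized rank metric. Fix a non-principal ultrafilter $\mathcal U$ on $\N$ and a sofic sequence $\varphi_k \colon G \to \Sym(n_k)$ with error parameters tending to zero along an exhausting chain of finite subsets. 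Form the quotient $\mathcal M = \prod_k M_{n_k}(\F)/J$, where $J$ is the two-sided ideal of sequences $(A_k)$ with $\lim_{\mathcal U} \rk(A_k)/n_k = 0$. The sequence $(\psi_k(g))_k$ then descends to a ring homomorphism $\Phi \colon \F G \to \mathcal M$, multiplicative by the almost-homomorphism condition and injective by almost-freeness.

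Second, I would show that $\mathcal M$ is stably finite. Since $M_d(\mathcal M)$ is itself a rank-metric ultraproduct of the $M_{d n_k}(\F)$, it suffices to show that $ab = 1$ in $\mathcal M$ implies $ba = 1$. Lift $a,b$ to sequences $(A_k),(B_k)$ with $\rk(A_k B_k - \mathbf{1})/n_k \to 0$ along $\mathcal U$. Subadditivity of $\rk$ forces $\rk(A_k),\rk(B_k) \ge n_k - o(n_k)$. The identity $(B_k A_k - \mathbf{1}) B_k = B_k(A_k B_k - \mathbf{1})$ combined with the near-full-rank of $B_k$ then gives $\rk(B_k A_k - \mathbf{1})/n_k \to 0$, so $ba = 1$ in $\mathcal M$. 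Since stable finiteness is inherited by subrings, $\F G$ is stably finite.

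The main obstacle is the second step: stable finiteness is not in general preserved by ultraproducts, and the argument crucially uses that $J$ is defined via the rank ideal (rather than coordinate-wise vanishing) so that one-sided inverses $AB = \mathbf{1}$ can be upgraded to two-sided inverses by rank subadditivity within each factor $M_{n_k}(\F)$. The almost-freeness content of the sofic hypothesis, which delivers injectivity of $\Phi$, is delicate but becomes routine once the translation from the Hamming metric on $\Sym(n_k)$ to the normalized rank metric on $M_{n_k}(\F)$ is set up.
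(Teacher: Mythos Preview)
The paper does not actually prove this theorem: it is quoted as a result of Elek--Szab\'{o} \cite{ES04} (building on Ara--O'Meara--Perera \cite{AOP02}) and used as a black box. Your proposal, by contrast, sketches the Elek--Szab\'{o} argument itself --- embed $\F G$ into a rank-metric ultraproduct of matrix algebras and verify stable finiteness there via Sylvester-type rank inequalities --- and the sketch is essentially correct. In particular, your identification of the key point is right: it is the choice of the \emph{rank} ideal $J$ (rather than the ordinary ultraproduct ideal) that allows one-sided inverses to be upgraded to two-sided inverses factor by factor, and the sofic almost-freeness is precisely what gives injectivity of $\Phi$. So your approach is not so much different from the paper's as it is a reconstruction of the cited proof that the paper omits; for the purposes of this article one would simply cite \cite{ES04} and move on, but nothing in your outline is wrong.
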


For a definition of sofic, see \cite[p430]{ES04}. For our purposes, it suffices to note that $G=1$ is sofic and that sofic groups are closed under direct/free products, direct/inverse limits, subgroups, and that the extension of an amenable group (see \cite[p227]{AOP02}) by a sofic group is sofic. There is no known example of a non-sofic group.

All groups which will be considered in this article are sofic. We can therefore assume, when needed, that non-trivial stably free $\F G$-modules have rank $\ge 1$.

\subsection{$R G$-modules over free products}
\label{subsection:Bergman}

Fix groups $G_1, \cdots, G_n$, let $G = \ast_{k=1}^n G_k$ denote the free product and let $\iota_k : G_k \hookrightarrow G$ denote the inclusion map for each $k$.

Let $R$ be a ring. If $M_k$ is an $R G_k$-module, then ${\iota_k}_\#(M_k) = R G \otimes_{R G_k}  M_k$ is an $R G$-module. We say that an $R G$-module $M$ is \textit{induced} if there exists $R G_k$-modules $M_k$ and an $R G$-module isomorphism
\[ M \cong {\iota_1}_\#(M_1) \oplus \cdots \oplus {\iota_n}_\#(M_n).\]

We now define two special types of map between induced $R G$-modules.
Firstly, if $M = \bigoplus_{k=1}^n {\iota_k}_\#(M_k)$ and $M' = \bigoplus_{k=1}^n {\iota_k}_\#(M_k')$ are induced $R G$-modules, then an $R G$-module homomorphism $f : M \to M'$ is called an \textit{induced homomorphism} if there exists $R G_k$-module homomorphisms $f_k : M_k \to M_k'$ such that $f = \oplus_{k=1}^n \iota_*(f_k)$.

Now, let $M = \bigoplus_{k=1}^n {\iota_k}_\#(M_k)$ be an induced $R G$-module and suppose there exists $a$ for which $M_a \cong M_a' \oplus R G_a$ for some $R G_a$-module $M_a'$. Then, for any $b \ne a$, there is an isomorphism
\[f_{a,b} : {\iota_a}_\#(M_a' \oplus R G_a) \oplus {\iota_b}_\#(M_b) \to {\iota_a}_\#(M_a') \oplus {\iota_b}_\#(M_b \oplus R G_b)\]
induced by ${\iota_a}_\#(R G_a) \cong R G \cong {\iota_b}_\#(R G_b)$. 
We define a \textit{free transfer isomorphism} on $a,b$ to be the isomorphism $F_{a,b} : M \to M'$ which extends $f_{a,b}$ by the identity map on the other components and where
\[ M' =  {\iota_1}_\#(M_1) \oplus \cdots \oplus {\iota_a}_\#(M_a') \oplus \cdots \oplus {\iota_b}_\#(M_b \oplus R G_b) \oplus \cdots \oplus  {\iota_n}_\#(M_n).\]

The following can be viewed as a special case of Bergman's theorem on modules over coproducts of rings \cite{Be74}. We now restrict to the case where $R = \F$ is a field.

\begin{thm}[Bergman]
\label{thm:Bergman}
Let $M$ be a finitely generated induced $\F G$-module. Then:
\begin{clist}{(i)}
\item If $M' \subseteq M$ is a submodule, then $M'$ is an induced $\F G$-module.
\item If $M'$ is an induced $\F G$-module, then $M \cong M'$ if and only if they are connected by a sequence of induced isomorphisms and free transfer isomorphisms.
\end{clist}
\end{thm}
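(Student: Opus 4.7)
The plan is to deduce both statements from Bergman's theorem in \cite{Be74}, applied to the coproduct decomposition $\F G \cong \F G_1 \sqcup_\F \cdots \sqcup_\F \F G_n$ of $\F$-algebras amalgamated over $\F$. Since $\F$ is a field, each $\F G_k$ is automatically free (hence faithfully flat) as a left and right $\F$-module, so the technical hypotheses of Bergman's framework are satisfied.

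For (i), I would invoke Bergman's structure result for submodules of modules induced from the factors of a coproduct. In our setting, this says that any submodule $M' \subseteq \bigoplus_k {\iota_k}_\#(M_k)$ admits a decomposition of the form $\bigoplus_k {\iota_k}_\#(M_k') \oplus (\F G)^r$ for some $\F G_k$-modules $M_k'$ and integer $r \ge 0$. Since $\F G \cong {\iota_k}_\#(\F G_k)$ for each $k$, the free summand can be absorbed into any one of the ${\iota_k}_\#$ components, yielding the required induced structure on $M'$.

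For (ii), the key input is Bergman's uniqueness theorem, which says that the isomorphism class of an induced $\F G$-module $\bigoplus_k {\iota_k}_\#(M_k)$ is determined by the $\F G_k$-isomorphism classes of the $M_k$ modulo free summands, together with the total $\F G$-rank of the combined free part. From this I would argue directly that any $\F G$-isomorphism between induced modules factors into the two elementary moves defined in the paper: the \emph{induced} isomorphisms realise componentwise isomorphisms $M_k \to M_k'$ preserving the decomposition, while the \emph{free transfer} isomorphisms $F_{a,b}$ realise precisely the ambiguity in distributing the free summands among the factors. A short induction on the total free rank then expresses any isomorphism as a finite composition of these two types, proving the nontrivial direction; the converse is clear since both elementary moves are themselves isomorphisms.

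The main obstacle is the translation between Bergman's abstract coproduct framework and the concrete language of induced and free transfer isomorphisms used here. Part (ii) in particular requires showing that these two classes of moves generate \emph{all} isomorphisms between induced modules, which is not an immediate formal consequence of Bergman's uniqueness statement: the maps $F_{a,b}$ must be produced explicitly as the generators of the freedom in redistributing free summands, and it must be verified that no further elementary moves are needed beyond these and the induced homomorphisms.
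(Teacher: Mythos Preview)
Your approach to part (i) matches the paper's: both simply cite Bergman's submodule theorem \cite[Theorem~2.2]{Be74}.

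For part (ii) your route diverges from the paper's, and the paper's is more direct in exactly the place you flagged as the main obstacle. Rather than invoking a classification of isomorphism \emph{classes} and then reconstructing a sequence of moves, the paper appeals to \cite[Theorem~2.3]{Be74} (together with the remark on p.~3 there), which already asserts that any given isomorphism $f:M\to M'$ between induced modules factors as a composition of three elementary types: induced isomorphisms, free transfer isomorphisms, and \emph{transvections}. The key observation --- which your proposal does not reach --- is that transvections are automorphisms of a fixed induced module, so deleting them from the composition still yields an isomorphism $M\to M'$ built only from induced isomorphisms and free transfers. This one-line remark eliminates precisely the difficulty you identified (``no further elementary moves are needed'') without any induction on free rank.

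Your strategy via the uniqueness theorem is not wrong in principle: if one knows that the $M_k$ are determined up to $\F G_k$-isomorphism modulo free summands and that the total free rank is invariant, one can split off a maximal free part from each $M_k$, apply an induced isomorphism to match the non-free cores, and then redistribute the free summands by a chain of $F_{a,b}$'s. But this requires justifying the existence of such maximal free splittings over each $\F G_k$ and checking that the residual pieces match, whereas the paper's argument sidesteps all of this by working with Bergman's factorisation of the isomorphism itself.
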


For the convenience of the reader, we will briefly outline how this can be deduced from Bergman's results. Here will will use the terminology from \cite[p1-4]{Be74}.

\begin{proof}[Proof \normalfont (outline)]
Firstly, note that $\F G$ is a the coproduct of the $\F$-rings $\F G_k$ which are faithful since they come equipped with natural injections $\iota_k : \F G_k \hookrightarrow \F G$. 

Part (i) follows immediately from \cite[Theorem 2.2]{Be74}. For part (ii), suppose $f: M \to M'$ is an isomorphism of $\F G$-modules. By \cite[Theorem 2.3]{Be74}, and the remarks in \cite[p3-4]{Be74}, $f$ is the composition of induced isomorphisms, free transfer isomorphisms and transvections. Since transvections are module automorphisms, omitting them from the composition still leaves an isomorphism of $\F G$-modules. \end{proof}

\begin{corollary} \label{cor:Bergman}
Let $M = \bigoplus_{k=1}^n {\iota_k}_\#(M_k)$ be a finitely generated induced $\F G$-module and suppose each $M_k$ has no direct summand of the form $\F G_k$. Then:
\begin{clist}{(i)}
\item If $M' = \bigoplus_{k=1}^n {\iota_k}_\#(M_k')$ is an induced $\F G$-module, then $M \cong M'$ as $\F G$-modules if and only if $M_k \cong M_k'$ as $\F G_k$-modules for all $k$.
\item $M$ has no direct summand of the form $\F G$.
\end{clist}
\end{corollary}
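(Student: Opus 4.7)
The ``if'' direction of (i) is immediate: given $\F G_k$-module isomorphisms $f_k : M_k \to M_k'$, the induced homomorphism $\bigoplus_{k=1}^n {\iota_k}_\#(f_k)$ is an $\F G$-module isomorphism $M \to M'$. For the ``only if'' direction, the plan is to apply \cref{thm:Bergman}(ii) to factor any isomorphism $M \cong M'$ as a chain
\[ M = N^{(0)} \xrightarrow{g_1} N^{(1)} \xrightarrow{g_2} \cdots \xrightarrow{g_s} N^{(s)} = M' \]
of induced isomorphisms and free transfer isomorphisms, where each $N^{(j)}$ comes equipped with an explicit induced decomposition $\bigoplus_k {\iota_k}_\#(N_k^{(j)})$. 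I would then show by induction on $j$ that each $N_k^{(j)}$ has no direct summand of the form $\F G_k$. The base case $j=0$ is the hypothesis on $M$. Given the claim at step $j$, no free transfer isomorphism $F_{a,b}$ can be applied at $N^{(j)}$, since by definition this requires $N_a^{(j)} \cong N_a' \oplus \F G_a$; hence $g_{j+1}$ must be an induced isomorphism. Since an induced isomorphism sends the $k$-th component isomorphically onto the $k$-th component of the target, the no-summand property passes to $N^{(j+1)}$. Hence every $g_j$ is induced, and composing componentwise yields $\F G_k$-isomorphisms $M_k \cong M_k'$.

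For (ii), I would argue by contradiction. Suppose $M \cong N \oplus \F G$ for some $\F G$-module $N$. Viewing $N$ as a submodule of $M$, \cref{thm:Bergman}(i) says $N$ is an induced $\F G$-module, so $N \cong \bigoplus_k {\iota_k}_\#(N_k)$ for some $\F G_k$-modules $N_k$. Since $\F G \cong {\iota_1}_\#(\F G_1)$, regrouping yields the induced decomposition
\[ M \cong {\iota_1}_\#(N_1 \oplus \F G_1) \oplus \bigoplus_{k=2}^n {\iota_k}_\#(N_k), \]
whose first component has $\F G_1$ as a direct summand. Applying part (i), already established, gives $M_1 \cong N_1 \oplus \F G_1$, contradicting the hypothesis that $M_1$ has no $\F G_1$-summand.

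The main obstacle is the inductive step in (i) — specifically, verifying that the ``no $\F G_k$-direct summand'' property propagates along the chain and rules out free transfers at every stage, not merely at the start. This reduces to the observation that an induced isomorphism acts componentwise, so each $N_k^{(j)}$ is replaced by an $\F G_k$-isomorphic module when $g_{j+1}$ is induced, and hence inherits the summand-free property. Once this is in place, both parts follow quickly from \cref{thm:Bergman} and the definitions in \cref{subsection:Bergman}.
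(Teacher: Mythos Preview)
Your proposal is correct and follows essentially the same approach as the paper. For (i), the paper simply observes that the hypothesis rules out free transfer isomorphisms and hence only induced isomorphisms appear in the chain from \cref{thm:Bergman}(ii); your explicit induction along the chain makes rigorous the point (glossed over in the paper) that the no-summand condition propagates through induced isomorphisms, so free transfers are excluded at every stage, not just the first. Your argument for (ii) is identical to the paper's.
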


\begin{proof}
Part (i) follows from \cref{thm:Bergman} (ii) since, if the $M_k$ have no direct summands of the form $\F G_k$, then there are no free transfer isomorphisms by definition. 

To see part (ii) note that, if $M \cong M' \oplus \F G$, then $M' \subseteq M$ is a submodule and so is an induced $\F G$-module by \cref{thm:Bergman} (i). If $M' = \bigoplus_{k=1}^n {\iota_k}_\#(M_k')$, then $M \cong {\iota_1}_\# (M_1' \oplus \F G_1) \oplus \bigoplus_{k=2}^n {\iota_k}_\#(M_k')$ which contradicts the result from (i).
\end{proof}

\subsection{$R G$-modules up to $\Aut(G)$-isomorphism}
\label{subsection:Aut(G)}

If $M$ is an $R G$-module and $\theta \in \Aut(G)$, then we can define $M_\theta$ to be the $R G$-module with the same underlying $R$-module as $M$ but with $G$-action given by $g \cdot_{M_{\theta}} m = \theta(g) \cdot_{M} m$ for $g \in G$ and $m \in M$. We say that $R G$-modules $M$ and $M'$ are \textit{$\Aut(G)$-isomorphic} if $M \cong (M')_\theta$ are isomorphic as $R G$-modules for some $\theta \in \Aut(G)$.
This is equivalent to the existence of a \textit{$\theta$-isomorphism} $f : M \to M'$ which is an $R$-module isomorphism for which $f(g \cdot m) = \theta(g) \cdot f(m)$ for $g \in G$ and $m \in M$.

This has a number of basic properties. In particular, if $M$ and $M'$ are $R G$-modules and $\theta \in \Aut(G)$, then $(M \oplus M')_\theta \cong M_\theta \oplus (M')_\theta$, and $(R G)_\theta \cong R G$ for all $\theta \in \Aut(G)$. 

Recall that a subgroup $N \subseteq G$ is \textit{characteristic} if $\theta(N) = N$ for all $\theta \in \Aut(G)$. We also say that a surjective map $f: G \twoheadrightarrow H$ is characteristic if $\Ker(f) \subseteq G$ is characteristic and, if so, then there is an induced map $\bar{\cdot} : \Aut(G) \to \Aut(H)$.

The following is straightforward (see, for example, \cite[Corollary 7.4]{Ni20a}).

\begin{prop} \label{prop:modules-over-quotients}
Let $G$ be a group, let $f: G \twoheadrightarrow H$ be characteristic and let $\bar{\cdot} : \Aut(G) \to \Aut(H)$ be the map induced by $f$. If $M$ is an $R G$ module and $\theta \in \Aut(G)$, then
$f_\#(M_\theta) \cong (f_\#(M))_{\bar{\theta}}$
are isomorphic as $R H$-modules. 
\end{prop}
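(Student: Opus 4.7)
The plan is to build an explicit $RH$-module isomorphism $\phi \colon f_\#(M_\theta) \to (f_\#(M))_{\bar\theta}$ using the standard model $f_\#(N) = RH \otimes_{RG} N$, in which $RH$ carries its usual $(RH,RG)$-bimodule structure via $f$. The only non-formal input is the compatibility $\bar\theta \circ f = f \circ \theta$, which is precisely the defining property of the induced automorphism $\bar\theta \in \Aut(H)$ (this is where characteristicness of $\ker(f)$ is used, since it is what allows $\bar\theta$ to be defined in the first place).

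First, I define $\phi$ on simple tensors by $\phi(h \otimes_{\theta} m) = \bar\theta(h) \otimes m$, where I write $\otimes_\theta$ for the tensor product over $RG$ taken with respect to the twisted $M_\theta$-action on the right factor, and $\otimes$ for the tensor product with the original $M$-action. Well-definedness amounts to checking that $\phi$ respects the balancing relation $h f(g) \otimes_\theta m = h \otimes_\theta (\theta(g) \cdot m)$ (which is the tensor relation coming from the $M_\theta$-action). Applying $\phi$ to the left-hand side and using $\bar\theta(f(g)) = f(\theta(g))$, I obtain $\bar\theta(h)\, f(\theta(g)) \otimes m = \bar\theta(h) \otimes \theta(g) \cdot m$, which agrees with $\phi$ of the right-hand side.

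Second, I verify $RH$-linearity against the twisted action on the target. By definition of the twist, $h' \cdot_{\bar\theta} (h \otimes m) = \bar\theta(h') h \otimes m$, and multiplicativity of $\bar\theta$ gives $\phi(h' h \otimes_\theta m) = \bar\theta(h'h) \otimes m = \bar\theta(h')\bar\theta(h) \otimes m$, so the two agree. Running the same construction with $\theta^{-1}$ in place of $\theta$ produces a two-sided inverse, so $\phi$ is the desired isomorphism.

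I do not anticipate any real difficulty; the argument is notational bookkeeping once the formula for $\phi$ is identified. The only point requiring care is to keep the twisted and untwisted balancing relations notationally distinct and to apply $\bar\theta$ to the $RH$-factor rather than to the $M$-factor throughout.
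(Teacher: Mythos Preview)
Your argument is correct: the explicit map $\phi(h \otimes_\theta m) = \bar\theta(h) \otimes m$ is well-defined by the compatibility $\bar\theta \circ f = f \circ \theta$, is $RH$-linear for the twisted action on the target, and has the evident inverse built from $\theta^{-1}$. The paper itself does not give a proof---it simply remarks that the statement is straightforward and refers to \cite[Corollary 7.4]{Ni20a}---so you have supplied exactly the routine verification that the paper omits, and by the natural method.
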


The following will be of use in applying \cref{prop:modules-over-quotients} to the case where $G$ is a free product. 
We say that a group $G$ is \textit{indecomposable} if it is non-trivial and $G \cong G_1 \ast G_2$ implies $G_1$ or $G_2$ is trivial.

\begin{prop} \label{prop:free-product-char}
Let $G = G_1 \ast \cdots \ast G_n$ where each $G_k$ is indecomposable and not infinite cyclic. For each $k$, let $f_k : G_k \twoheadrightarrow H_k$ be characteristic and such that, if $G_i \cong G_j$, then $H_i \cong H_j$ and $f_i$, $f_j$ differ by automorphisms of $G_i$, $H_i$.

If $f : G \twoheadrightarrow H_1 \ast \cdots \ast H_n$ is the map with $f \mid_{G_k} = f_k$, then $f$ is characteristic.
\end{prop}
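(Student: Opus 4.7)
The plan is to show directly that $\Ker(f)$ is preserved by every $\phi \in \Aut(G)$. First I would identify $\Ker(f)$ explicitly: writing $N_k = \Ker(f_k) \subseteq G_k$, the construction of $f$ as the coproduct of the $f_k$ identifies the quotient $G / \langle\langle N_1 \cup \cdots \cup N_n \rangle\rangle^G$ with $H_1 \ast \cdots \ast H_n$. Hence
\[\Ker(f) = \langle\langle N_1 \cup \cdots \cup N_n \rangle\rangle^G,\]
so it suffices to show $\phi(N_k) \subseteq \Ker(f)$ for each $k$ and each $\phi \in \Aut(G)$ (applying the same to $\phi^{-1}$ then gives equality).

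Next, I would invoke the following consequence of the Kurosh subgroup theorem (the relevant rigidity statement for automorphisms of free products with indecomposable non-infinite-cyclic factors): for any $\phi \in \Aut(G)$, there exist a permutation $\sigma \in S_n$ with $G_{\sigma(k)} \cong G_k$, elements $g_k \in G$, and isomorphisms $\psi_k : G_k \to G_{\sigma(k)}$ such that
\[\phi(x) = g_k \, \psi_k(x) \, g_k^{-1}, \qquad x \in G_k.\]
The hypothesis that each $G_k$ is indecomposable and not infinite cyclic is essential here: it rules out Nielsen moves on free factors and forces every automorphism to permute the factors (up to conjugation) rather than mix them.

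The key step is then to show $\psi_k(N_k) = N_{\sigma(k)}$. By the hypothesis on the $f_k$, when $G_i \cong G_j$ there is some isomorphism $\iota : G_i \to G_j$ (the identification under which $f_i$ and $f_j$ differ by automorphisms of $G_i$ and $H_i$) for which $\iota(N_i) = N_j$. But any two isomorphisms $G_k \to G_{\sigma(k)}$ differ by an element of $\Aut(G_k)$, and $N_k$ is characteristic in $G_k$, so in fact \emph{every} isomorphism $G_k \to G_{\sigma(k)}$ sends $N_k$ to $N_{\sigma(k)}$. In particular this holds for the $\psi_k$ produced by Kurosh rigidity. Therefore
\[\phi(N_k) \;=\; g_k \, \psi_k(N_k) \, g_k^{-1} \;=\; g_k \, N_{\sigma(k)} \, g_k^{-1} \;\subseteq\; \langle\langle N_{\sigma(k)} \rangle\rangle^G \;\subseteq\; \Ker(f),\]
which completes the argument.

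I do not expect a serious obstacle: the only step requiring outside input is the rigidity statement for $\Aut(G)$, which is classical and could be cited from a standard reference on Bass--Serre theory or on automorphisms of free products (e.g., Magnus--Karrass--Solitar). Everything else is a direct interplay between the characteristic property of the $N_k$, the hypothesis on the $f_k$, and the elementary fact that a normal closure is stable under conjugation.
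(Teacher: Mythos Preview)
Your proposal is correct and follows essentially the same approach as the paper: both identify $\Ker(f)$ as the normal closure of the $N_k$, use Kurosh rigidity to see that any $\phi \in \Aut(G)$ restricts on each $G_k$ to a conjugate of an isomorphism $G_k \to G_{\sigma(k)}$, and then use the characteristic hypothesis on the $N_k$ together with the compatibility of the $f_k$ to conclude that this isomorphism carries $N_k$ onto $N_{\sigma(k)}$. The only minor difference is that the paper derives the rigidity statement directly from the Kurosh subgroup theorem (showing $\varphi(G_k)$ must equal a conjugate of some full factor $G_{i_k}$), whereas you cite it as a known structural fact about $\Aut(G_1 \ast \cdots \ast G_n)$.
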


Our proof will be a routine application of the following version of the Kurosh subgroup theorem \cite[Theorem 5.1]{Ma77}.

\begin{thm}[Kurosh subgroup theorem]
Let $G = G_1 \ast \cdots \ast G_n$. If $H \subseteq G$ is a subgroup, then
\[ H = F(X) \ast (\ast_{k=1}^n g_k H_k g_k^{-1})\]
where $F(X)$ is the free group on a set $X$, $H_k \subseteq G_k$ is a subgroup and $g_k \in G$.
\end{thm}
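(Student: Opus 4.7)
The plan is to prove the Kurosh subgroup theorem via Bass--Serre theory, which gives the cleanest and most conceptual proof. First I would realize $G = G_1 \ast \cdots \ast G_n$ as the fundamental group of a graph of groups $\mathcal{G}$: take the star-shaped graph with a single central vertex carrying the trivial group, $n$ leaf vertices carrying the groups $G_1, \ldots, G_n$, and $n$ edges with trivial edge groups joining each leaf to the centre. The normal-form theorem for free products identifies $\pi_1(\mathcal{G}) \cong G$. Let $T$ be the associated Bass--Serre tree. Then $G$ acts on $T$ with trivial edge stabilizers, vertex stabilizers conjugate to some $G_k$ at translates of the leaf vertices, and trivial stabilizers at translates of the central vertex.

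Next, for any subgroup $H \subseteq G$, restrict the action to obtain an $H$-action on $T$, still with trivial edge stabilizers. Form the quotient graph of groups $\mathcal{H} = H\backslash\!\!\backslash T$. By the fundamental theorem of Bass--Serre theory, $H \cong \pi_1(\mathcal{H})$. Each vertex of $\mathcal{H}$ lifts to some $\tilde{v} \in T$, so its vertex group is $H \cap \Stab_G(\tilde{v})$, which is either trivial or of the form $H \cap g G_k g^{-1}$ for some $g \in G$ and some $k$. All edge groups of $\mathcal{H}$ are trivial.

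Since every edge group of $\mathcal{H}$ is trivial, the fundamental group $\pi_1(\mathcal{H})$ collapses to a free product: it is the free product of all non-trivial vertex groups with a free group $F(X)$, where $X$ is a basis corresponding to the edges of $\mathcal{H}$ lying outside a chosen spanning subtree (the standard presentation of a graph of groups simplifies this way when the edge groups are trivial). Conjugating each vertex group $H \cap g G_k g^{-1}$ by $g^{-1}$ gives a subgroup $H_k' := g^{-1} H g \cap G_k \subseteq G_k$ with $H \cap g G_k g^{-1} = g H_k' g^{-1}$, yielding a decomposition of the required form (with the indexing in the statement read loosely, since in general there is one conjugate factor per orbit of $H$ on the set of $G_k$-translates of a leaf vertex).

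The main obstacle is the careful invocation of the fundamental theorem of Bass--Serre theory, namely the correspondence between $H$-actions on the tree $T$ and splittings of $H$ as $\pi_1$ of the quotient graph of groups $H\backslash\!\!\backslash T$, together with the observation that trivial edge stabilizers force this splitting to become a literal free product. An alternative, more combinatorial route uses Schreier coset rewriting applied to the normal form for free products, as in Magnus--Karrass--Solitar \cite{Ma77}, but the tree-theoretic argument avoids the bookkeeping of double coset representatives and makes the geometric origin of the free factor $F(X)$ transparent.
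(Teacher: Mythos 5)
Your Bass--Serre argument is correct, but note that the paper does not prove this statement at all: it is quoted as a classical result with a citation to \cite[Theorem 5.1]{Ma77}, and is only used as a black box in the proof of the proposition on characteristic quotients of free products. So there is no ``paper proof'' to match; what you have supplied is a standard and complete route to the theorem. The steps all check out: the star-shaped graph of groups with trivial central vertex and trivial edge groups has fundamental group $G_1 \ast \cdots \ast G_n$, the induced $H$-action on the Bass--Serre tree has trivial edge stabilizers, the structure theorem identifies $H$ with $\pi_1$ of the quotient graph of groups, and trivial edge groups collapse the standard presentation to a genuine free product of the vertex groups with a free group on the edges outside a spanning tree. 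You are also right to flag that the statement as printed is a loose abbreviation: the general conclusion is a free product over a (possibly infinite) family of conjugates $g\,(g^{-1}Hg \cap G_k)\,g^{-1}$ indexed by the $H$-orbits of translates of the leaf vertices (equivalently, by double cosets $H\backslash G/G_k$), not literally one conjugate per free factor. This looseness is harmless for the paper's application, where $H = \varphi(G_k)$ is indecomposable and not infinite cyclic, so exactly one conjugate factor survives. The combinatorial Schreier-rewriting proof you mention as an alternative is the one in the cited source; either is acceptable, and yours is arguably cleaner.
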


\begin{proof}[Proof of \cref{prop:free-product-char}]
Let $\varphi \in \Aut(G)$. Then $\varphi(G_k) \subseteq G$ is indecomposable and not infinite cyclic and so, by the Kurosh subgroup theorem, we have $\varphi(G_k) = g_{i_k} H_{i_k} g_{i_k}^{-1}$ for some subgroup $H_{i_k} \subseteq G_{i_k}$. Since $\varphi$ is an automorphism, we have:
\[ G = \ast_{k=1}^n (g_{i_k} H_{i_k} g_{i_k}^{-1}) \subseteq \ast_{k=1}^n (g_{i_k} G_{i_k} g_{i_k}^{-1}) \subseteq \ast_{k=1}^n (g_k G_k g_k^{-1}) = G  \]
which implies that $H_{i_k} = G_{i_k}$ and that the $i_k$ are distinct.

Let $N_k = \Ker(f_k) \subseteq G_k$ and note that $N = \Ker(f)$ is generated by the subgroups $g N_k g^{-1}$ for $g \in G$. If $\varphi \in \Aut(G)$, then the above implies that $\varphi \mid_{G_k} = c_{g_{i_k}} \circ \varphi_{i,i_k}$ where $\varphi_{i,i_k} : G_i \to G_{i_k}$ is an isomorphism and $c_{g_{i_k}} : G_{i_k} \to G$ is conjugation by $g_{i_k}$. Since $f_{i}, f_{i_k}$ differ by automorphisms of $G_i, G_{i_k}$, we have $\varphi_{i,i_k}(N_i) = \varphi_{i_k}(N_{i_k})$ for some $\varphi_{i_k} \in \Aut(G_{i_k})$ and so $\varphi_{i,i_k}(N_i) = N_{i_k}$ since $N_{i_k}$ is characteristic. Hence $\varphi(g N_k g^{-1}) = (g g_{i_k}) N_{i_k} (g g_{i_k})^{-1} \subseteq N$ and so $N$ is characteristic.
\end{proof}

\subsection{Duals of $R G$-modules} \label{ss:duals}

We will make use of the following in \cref{section:4-manifolds} on the unstable classification of 4-manifolds. Recall that $R G$ comes equipped with an involution 
\[ \overline{\cdot} : R G \to R G, \quad \sum_{i=1}^r n_i g_i \mapsto \sum_{i=1}^r n_i g_i^{-1}\] 
where $n_i \in R$, $g_i \in G$. This is an anti-isomorphism of rings.

If $M$ is a (left) $R G$-module, then define $M^* = \Hom_{R G}(M,R G)$ to be the (left) $R G$-module with $R G$-action given by letting
\[ (\lambda \varphi) : m \mapsto \varphi(m) \overline{\lambda}  \]
for $\lambda \in R G$ and $\varphi \in M^*$. This satisfies a number of basic properties such as that $(R G^n)^* \cong R G^n$ as $R G$-modules.
The following facts about duals of projective $R G$-modules are standard. Part (ii) says that projective modules are \textit{reflexive}.

\begin{prop} \label{prop:dual-of-proj}
Let $P$ be a finitely generated projective $R G$-module. Then:
\begin{clist}{(i)}
\item
 $P^*$ is a finitely generated projective $R G$-module
\item
The evaluation map $\ev_P : P \to P^{**}$, $x \mapsto (\varphi \mapsto \varphi(x))$ is an isomorphism of $R G$-modules.
\end{clist}
\end{prop}

\begin{proof}
(i) If $P \oplus Q \cong R G^n$, then $P^* \oplus Q^* \cong (R G^n)^* \cong R G^n$.

(ii) If $P \oplus Q \cong R G^n$, then $\ev_{R G^n} = \ev_{P} \oplus \ev_{Q}$ so $\ev_P$ is an isomorphism since $\ev_{R G^n}$ is.
\end{proof}

We will now prove that dualising commutes with the action of $\Aut(G)$ defined in \cref{subsection:Aut(G)}. 

\begin{prop} \label{prop:dual-of-Aut(G)}
Let $M$ be an $R G$-module and let $\theta \in \Aut(G)$. Then there is an isomorphism of $R G$-modules
\[ (M_\theta)^* \cong (M^*)_\theta. \]
In particular, if $R G$-modules $M$ and $N$ are $\Aut(G)$-isomorphic, then $M^*$ and $N^*$ are $\Aut(G)$-isomorphic.
\end{prop}

\begin{proof}
For each $\alpha \in \Aut(G)$, there is an isomorphism of $R G$-modules given by 
\[ \alpha_* : R G \to R G_\alpha, \quad \sum_{i=1}^r n_i g_i \mapsto \sum_{i=1}^r n_i \alpha(g_i) \]
where $n_i \in R$ and $g_i \in G$. Given this, define
\[ f : M^* \to (M_\theta)^*, \quad \varphi \mapsto (\theta^{-1})_* \circ \varphi  \]
where we are viewing $\varphi : M \to R G$ as an $R G$-homomorphism $\varphi : M_\theta \to R G_\theta$. 

We claim that $f$ is a $\theta^{-1}$-isomorphism. This gives the desired result since it implies that $M^* \cong ((M_\theta)^*)_{\theta^{-1}}$ and so $(M^*)_\theta \cong (M_\theta)^*$ by applying $(\,\cdot\,)_\theta$ to both sides.
Firstly, it is clear that this is an $R$-module homomorphism, and is an isomorphism with inverse given by post-composing with $\theta_*$. 
For $g \in G$, $\varphi \in M^*$ and $m \in M$, we have
\begin{align*}
f(g \cdot \varphi)(m) &= (\theta^{-1})_ *(\varphi(m)g^{-1}) = ((\theta^{-1})_* \circ \varphi)(m)(\theta^{-1}(g))^{-1} \\
& = (\theta^{-1}(g) \cdot ((\theta^{-1})_* \circ \varphi))(m) = (\theta^{-1}(g) \cdot f(\varphi))(m)
\end{align*}
and so $f(g \cdot \varphi) = \theta^{-1}(g) \cdot f(\varphi)$, as required.
\end{proof}

\section{Groups of finite cohomological dimension}
\label{section:cd(G)}

We will now recall some basic facts about groups with finite cohomological dimension which are due to Serre \cite{Se71}.
A standard reference is the notes of Bieri \cite{Bi81}.

A group $G$ has \textit{cohomological dimension $n$}, written $\cd(G) = n$, if $n$ is the smallest integer for which there exists a projective resolution of $\Z G$-modules of the form:
\[ 0 \to P_n \to \cdots \to P_1 \to P_0 \to \Z \to 0.\]
This is equivalent to asking that $H^i(G;M)=0$ for all $i > n$ and all $\Z G$-modules $M$ \cite[Proposition 5.1(a)]{Bi81}.
If no such $n$ exists, then we take $\cd(G) = \infty$.

A group $G$ is said to be \textit{of type $\FL$} if, for some $n \ge 0$, there exists a resolution of finitely generated free $\Z G$-modules of the form:
\[ 0 \to F_n \to \cdots \to F_1 \to F_0 \to \Z \to 0\]
The following is \cite[Propositions 1.5, 4.1(b)]{Bi81}.

\begin{prop} \label{prop:FP+cd}
Let $G$ be a group with $\cd(G) = n$. If $G$ is of type $\FL$, then there exists a resolution of finitely generated free $\Z G$-modules of the form:
\[ 0 \to F_n \to \cdots \to F_1 \to F_0 \to \Z \to 0.\]
\end{prop}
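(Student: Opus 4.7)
The plan is to take the finite-type FL resolution provided by the hypothesis and truncate it at level $n$, then correct the fact that the truncation lands in a stably free (but possibly not free) module by absorbing the defect into $F_{n-1}$.

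In more detail, since $G$ is of type $\FL$, there is a resolution
\[ 0 \to F_m \to F_{m-1} \to \cdots \to F_0 \to \Z \to 0 \]
by finitely generated free $\Z G$-modules for some $m$, and we may assume $m \ge n$ since appending $\Z G$-summands at the top preserves exactness. Let $K_i = \Ker(F_{i-1} \to F_{i-2})$ denote the $i$-th syzygy (with the convention $F_{-1} = \Z$). Because $\cd(G) = n$, the standard dimension-shifting argument (as in \cite[Proposition 1.5]{Bi81}, which is the cited result) implies that $K_n$ is a projective $\Z G$-module. Moreover, $K_n$ inherits from the FL resolution the finite free resolution
\[ 0 \to F_m \to F_{m-1} \to \cdots \to F_n \to K_n \to 0, \]
so that $K_n$ is stably free by the usual inductive splitting argument: the surjection $F_n \to K_n$ splits, and the kernel of that split is again projective with a shorter finite free resolution, etc.

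Write $K_n \oplus \Z G^r \cong \Z G^s$ for some $r,s \ge 0$. I then replace the tail of the truncated resolution
\[ 0 \to K_n \to F_{n-1} \to F_{n-2} \to \cdots \to F_0 \to \Z \to 0 \]
as follows: enlarge $F_{n-1}$ to $F_{n-1} \oplus \Z G^r$, extending the differential $d_{n-1}$ by zero on the new summand, and introduce a new free module $\Z G^s$ in degree $n$ whose map into $F_{n-1} \oplus \Z G^r$ is the inclusion $K_n \hookrightarrow F_{n-1}$ on the $K_n$ factor and the identity on the $\Z G^r$ factor under the identification $\Z G^s \cong K_n \oplus \Z G^r$. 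The kernel of the modified degree-$(n-1)$ differential is exactly $K_n \oplus \Z G^r$, which coincides with the image of the new degree-$n$ map, so exactness at degree $n-1$ holds; injectivity in degree $n$ is immediate.

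I expect no serious obstacle beyond keeping track of the bookkeeping. The two points that must be invoked carefully are (a) the projectivity of the $n$-th syzygy, which is exactly the content of the cited characterization of $\cd(G)$, and (b) the passage from stably free to free, which is handled by inflating $F_{n-1}$ rather than attempting to modify $K_n$ directly; this is a standard manoeuvre but the one that requires the most care to write down.
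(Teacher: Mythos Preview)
The paper does not give its own proof of this proposition; it simply cites \cite[Propositions~1.5, 4.1(b)]{Bi81}. Your argument is the standard one underlying those cited results and is correct: projectivity of the $n$th syzygy is precisely the characterization of $\cd(G)\le n$, the remaining finite free tail forces that syzygy to be stably free, and the Eilenberg-style inflation of $F_{n-1}$ converts stably free to free. Two minor remarks: the hypothesis $m\ge n$ is automatic (a length-$m$ free resolution already gives $\cd(G)\le m$), so the aside about appending summands is unnecessary; and the inflation step tacitly assumes $n\ge 1$, but the case $n=0$ forces $G=1$ and is trivial.
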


We now recall how these conditions are related under amalgamated free products and direct products. The following is \cite[Proposition 2.13(a), Proposition 6.1]{Bi81}.

\begin{lemma} \label{lemma:cd-amalg}
Let $G = G_1 \ast_H G_2$ for groups $G_1$, $G_2$ with a common subgroup $H$.
\begin{clist}{(i)}
\item If $G_1$, $G_2$ are of type $\FL$ and $H$ is of type $\FL$, then $G$ is of type $\FL$
\item If $n = \max\{\cd(G_1),\cd(G_2)\} < \infty$ and  $\cd(H) < n$, then $\cd(G) = n$. 
\end{clist}
\end{lemma}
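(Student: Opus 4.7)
The plan is to deduce both parts from the single Mayer--Vietoris short exact sequence of $\Z G$-modules
\[ 0 \to \Z G \otimes_{\Z H} \Z \to (\Z G \otimes_{\Z G_1} \Z) \oplus (\Z G \otimes_{\Z G_2} \Z) \to \Z \to 0, \]
which comes from the action of $G$ on its Bass--Serre tree (with vertex stabilisers conjugate to $G_1$, $G_2$ and edge stabilisers conjugate to $H$), or equivalently from a direct combinatorial argument using the normal form in an amalgamated free product. Establishing this sequence is the main technical step that the rest of the proof hinges on; everything afterwards is essentially homological bookkeeping.

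For part (i), I would start with finite free resolutions $F_*^1 \to \Z$ over $\Z G_1$, $F_*^2 \to \Z$ over $\Z G_2$, and $F_*^H \to \Z$ over $\Z H$, which exist by the $\FL$ hypothesis. Since $\Z G$ is a free right $\Z G_i$-module and a free right $\Z H$-module (by a choice of coset representatives), the functors $\Z G \otimes_{\Z G_i} -$ and $\Z G \otimes_{\Z H} -$ preserve exactness and send finitely generated free modules to finitely generated free modules. So tensoring up yields finite free $\Z G$-resolutions of the three terms in the short exact sequence. Next, lift the inclusion $\Z G \otimes_{\Z H} \Z \hookrightarrow (\Z G \otimes_{\Z G_1} \Z) \oplus (\Z G \otimes_{\Z G_2} \Z)$ to a chain map between the induced resolutions, and take its mapping cone: this produces a finite free resolution of the cokernel, which is $\Z$. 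Hence $G$ is of type $\FL$.

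For part (ii), I would apply $\Ext^*_{\Z G}(-, M)$ to the short exact sequence. Because $\Z G$ is $\Z K$-free for $K \in \{G_1, G_2, H\}$, Shapiro's lemma gives $\Ext^i_{\Z G}(\Z G \otimes_{\Z K} \Z, M) = H^i(K; M)$, yielding the long exact sequence
\[ \cdots \to H^i(G;M) \to H^i(G_1;M) \oplus H^i(G_2;M) \to H^i(H;M) \to H^{i+1}(G;M) \to \cdots. \]
For $i \geq n+1$, the two right-hand groups vanish by $\cd(G_j) \le n$, and for $i \geq n$ the term $H^i(H;M)$ vanishes by $\cd(H) < n$; exactness then forces $H^{i}(G; M) = 0$ for all $i > n$ and all $M$, so $\cd(G) \le n$.

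For the reverse inequality, I would use the standard fact that if $K \le G$ is any subgroup then $\cd(K) \le \cd(G)$: since $\Z G$ is $\Z K$-free, any projective $\Z G$-resolution of $\Z$ restricts to a projective $\Z K$-resolution of $\Z$. Applied to $K = G_1, G_2$, this gives $n = \max\{\cd(G_1), \cd(G_2)\} \le \cd(G)$, and combined with the upper bound we conclude $\cd(G) = n$. The main obstacle, as noted, is justifying the Mayer--Vietoris sequence; once that is in hand the argument is a straightforward application of standard homological machinery.
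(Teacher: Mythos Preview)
Your argument is correct. The Mayer--Vietoris short exact sequence you write down is standard (it is the augmented cellular chain complex of the Bass--Serre tree), and once it is in hand your deductions for both (i) and (ii) are sound: the mapping-cone construction does give a finite free resolution of the cokernel because the map $\Z G \otimes_{\Z H} \Z \to (\Z G \otimes_{\Z G_1} \Z) \oplus (\Z G \otimes_{\Z G_2} \Z)$ is injective, and the long exact sequence together with subgroup monotonicity of $\cd$ pins down $\cd(G)=n$ exactly as you say.

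The paper itself does not prove this lemma at all; it simply records it as a citation to Bieri's notes \cite[Proposition 2.13(a), Proposition 6.1]{Bi81}. What you have written is essentially the proof one finds there, so there is no substantive difference in approach---you have just unpacked the reference.
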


The following is a consequence of more general results on group extensions which can be found in \cite[Proposition 2.7, Theorem 5.5]{Bi81}.

\begin{lemma} \label{lemma:cd-direct}
Let $G = G_1 \times G_2$ for groups $G_1$, $G_2$.
\begin{clist}{(i)}
\item If $G_1$, $G_2$ are of type $\FL$, then $G$ is of type $\FL$
\item If $\cd(G_1), \cd(G_2) < \infty$, $G_1$ is of type $\FL$ and $H^n(G_1;\Z G_1)$ is $\Z$-free for $n = \cd(G_1)$, then $\cd(G) = \cd(G_1) + \cd(G_2)$.
\end{clist}
\end{lemma}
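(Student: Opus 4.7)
The plan is to reduce both parts to the fact that $\Z G \cong \Z G_1 \otimes_\Z \Z G_2$, so that resolutions over the factor group rings can be combined via tensor product of chain complexes.

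For part (i), I would choose finite free resolutions $F_\bullet \to \Z$ over $\Z G_1$ and $F'_\bullet \to \Z$ over $\Z G_2$. Since every free $\Z G_i$-module is $\Z$-free, the algebraic K\"unneth theorem for complexes over $\Z$ shows that the total complex of $F_\bullet \otimes_\Z F'_\bullet$ is acyclic in positive degrees with $H_0 = \Z \otimes_\Z \Z = \Z$. Each $F_p \otimes_\Z F'_q$ is a finite direct sum of copies of $\Z G_1 \otimes_\Z \Z G_2 \cong \Z G$, giving a finite free resolution over $\Z G$.

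For part (ii), the inequality $\cd(G) \le n+m$ follows from the same tensor construction applied to finite projective resolutions of lengths $n$ and $m$, which exist because $\cd(G_i) < \infty$. For the reverse inequality I would exhibit a $\Z G$-module $M$ with $H^{n+m}(G;M) \ne 0$. Pick a $\Z G_2$-module $B$ with $H^m(G_2;B) \ne 0$ (available by $\cd(G_2) = m$) and set $M = \Z G_1 \otimes_\Z B$ with diagonal action $(g_1,g_2)\cdot(x\otimes b) = g_1 x \otimes g_2 b$. Using the $\FL$ resolution $F_\bullet$ over $\Z G_1$ and a projective resolution $F'_\bullet$ over $\Z G_2$, the finiteness and freeness of each $F_p$ gives a natural identification
\begin{align*}
\Hom_{\Z G}(F_\bullet \otimes_\Z F'_\bullet,\, \Z G_1 \otimes_\Z B) \;\cong\; \Hom_{\Z G_1}(F_\bullet, \Z G_1) \otimes_\Z \Hom_{\Z G_2}(F'_\bullet, B).
\end{align*}
The left-hand complex computes $H^\ast(G;M)$, while the right-hand complex is a tensor product of $\Z$-free cochain complexes concentrated in degrees $\le n$ and $\le m$ respectively. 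The cohomological K\"unneth formula, applied in top degree $n+m$ where the $\Tor$ term vanishes automatically, yields
\begin{align*}
H^{n+m}(G;M) \;\cong\; H^n(G_1;\Z G_1) \otimes_\Z H^m(G_2;B),
\end{align*}
and the $\Z$-freeness hypothesis guarantees this tensor product is nonzero.

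The main obstacle I anticipate is the bookkeeping of the diagonal $\Z G$-structure in the Hom--tensor identification: one must verify that the natural map agrees with the correct $G_1\times G_2$-action on both sides, and that the K\"unneth formula correctly splits in the top degree. The role of the $\Z$-freeness of $H^n(G_1;\Z G_1)$ is precisely to prevent the tensor product from being killed by torsion in $H^m(G_2;B)$; without it one only obtains an inequality $\cd(G) \le n+m$ and the lower bound could a priori fail.
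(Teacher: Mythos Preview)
The paper does not give its own proof of this lemma; it simply records that the statement is a consequence of results in Bieri's notes (specifically \cite[Proposition 2.7, Theorem 5.5]{Bi81}) on group extensions, specialised to the split extension $G_1 \times G_2$. Your tensor-product-of-resolutions argument is exactly the standard proof underlying those cited results, so you are filling in what the paper leaves as a reference.

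One small point worth making explicit: in your final step you conclude that $H^n(G_1;\Z G_1) \otimes_\Z H^m(G_2;B)$ is nonzero from the $\Z$-freeness of the first factor, but of course you also need that $H^n(G_1;\Z G_1)$ itself is nonzero. This is where the $\FL$ hypothesis on $G_1$ does additional work beyond providing the finite free resolution: for a group of type $\FP$ with $\cd(G_1)=n<\infty$, the functor $H^n(G_1;-)$ is right exact and nonzero, hence does not vanish on $\Z G_1$ (equivalently, $n = \sup\{i : H^i(G_1;\Z G_1)\ne 0\}$ for such groups). You should state this, since without it the tensor product could vanish even with the freeness assumption.
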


We will now give a construction of groups which will be the basis for our examples in \cref{thm:main-SF} in the case $d \ge 3$. This is inspired by a construction of Lustig \cite{Lu93}.

\begin{construction} \label{construction}
Let $G$ be a group and let $m \ge 2$ be an integer. Then define
\[ G_+ = ( G \ast \langle r \mid \hspace{-.8mm}-\rangle )/ [r^m,G], \]
which is isomorphic to $(G \times \langle q \mid \hspace{-.8mm}-\rangle) \ast_{\langle q = r^{m} \rangle} \langle r \mid \hspace{-.8mm}-\rangle$. For integers $m_1, \cdots, m_{n-1} \ge 2$, we can define $G_{(n)}$ inductively by letting $G_{(1)} = G$ and $G_{(i+1)} = (G_{(i)})_+$ for $i \ge 1$. We will label the new generator by $r_i$.
The choice of $m_i \ge 2$ will not matter for the purposes of this article; it suffices to consider the case $m_i=2$.
\end{construction}

Let $\iota : G \to G_{(n)}$ be the composition of the natural maps $G_{(i)} \to G_{(i+1)}$ and let $f: G_{(n)} \to G$ be the map which sends $r_i \mapsto 1$ for each $i$. We have that $f \circ \iota = \id_{G}$ and so $\iota$ is injective, $f$ is surjective and $G$ is a retract of $G_{(n)}$.

\begin{prop} \label{prop:group-construction}
Let $n \ge 1$ and let $G$ be a finitely presented group of type $\FL$ with $\cd(G) = d$. Then:
\begin{clist}{(i)}
\item $G_{(n)}$ is a finitely presented group of type $\FL$ with $\cd(G_{(n)})=n+d-1$
\item The map $f: G_{(n)} \twoheadrightarrow G$ is characteristic.
\end{clist}
\end{prop}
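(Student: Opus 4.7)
The plan is to prove both parts by induction on $n$, with $n=1$ tautological. For part (i), at the inductive step I use the decomposition $G_{(i+1)} \cong (G_{(i)} \times \Z) \ast_\Z \Z$, where the amalgamated $\Z$ sits in the second factor $\langle r_i \rangle$ as the index-$m$ subgroup $\langle r_i^m \rangle$. Finite presentability is immediate (adjoin $r_i$ and the finitely many relators $[r_i^m, g_j]$), and type $\FL$ follows by combining \cref{lemma:cd-direct}(i) (for $G_{(i)} \times \Z$) with \cref{lemma:cd-amalg}(i). For the cohomological dimension, a K\"unneth calculation on $H^{\cd(G_{(i)})+1}(G_{(i)} \times \Z; \Z[G_{(i)} \times \Z])$, using the standard fact that $H^{\cd(G_{(i)})}(G_{(i)}; \Z G_{(i)}) \neq 0$, gives $\cd(G_{(i)} \times \Z) = \cd(G_{(i)}) + 1$; then \cref{lemma:cd-amalg}(ii) yields $\cd(G_{(i+1)}) = \cd(G_{(i)}) + 1$.

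For part (ii), since compositions of characteristic surjections are characteristic (a diagram chase using that automorphisms of the domain descend), it suffices to show each one-step quotient $G_{(i+1)} \twoheadrightarrow G_{(i)}$ killing $r_i$ is characteristic. Writing $H = G_{(i)}$, which is torsion-free since $\cd(H) < \infty$, I plan to factor $\pi : H_+ \twoheadrightarrow H$ through two successive characteristic quotients. The first key claim is that $Z(H_+) = \langle r^m \rangle$: the inclusion $\supseteq$ is immediate, since $r^m$ is central in both vertex groups of the amalgamation $(H \times \Z) \ast_\Z \Z$, and the reverse inclusion follows from a normal form argument, since any element with nontrivial reduced normal form fails to commute either with $r$ or with suitable nontrivial elements of $H$. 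Quotienting by this characteristic subgroup gives $H_+ / \langle r^m \rangle \cong H \ast \Z/m$ (killing $r^m$ collapses the $\Z$ factor of $H \times \Z$ and reduces $\langle r \rangle$ to $\Z/m$), and $\pi$ factors through this quotient.

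It remains to check that the residual surjection $H \ast \Z/m \twoheadrightarrow H$ killing the $\Z/m$ factor is characteristic. Since $H$ is torsion-free, the Kurosh subgroup theorem implies that every torsion element of $H \ast \Z/m$ is conjugate into the $\Z/m$ factor, so the kernel of this quotient (the normal closure of $\Z/m$) coincides with the subgroup generated by all the torsion elements of $H \ast \Z/m$. This subgroup is manifestly characteristic, since automorphisms preserve the orders of elements. Composing the two characteristic quotients yields that $\pi$ is characteristic, and the induction then gives the result for $f: G_{(n)} \twoheadrightarrow G$. The main obstacle is the centre computation $Z(H_+) = \langle r^m \rangle$, which requires careful bookkeeping with amalgamated-product normal forms but is otherwise routine.
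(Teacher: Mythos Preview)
Your proposal is correct and follows essentially the same approach as the paper. For part (i) both arguments use the decomposition $G_{(i+1)} \cong (G_{(i)} \times \Z) \ast_{\Z} \Z$ together with \cref{lemma:cd-amalg} and \cref{lemma:cd-direct}; the only cosmetic difference is that the paper applies \cref{lemma:cd-direct}(ii) with $G_1 = \Z$ (so the hypothesis that $H^1(\Z;\Z[\Z])$ is $\Z$-free is immediate), whereas you compute $\cd(G_{(i)} \times \Z)$ via a direct K\"unneth argument. For part (ii) the paper packages the one-step statement as \cref{lemma:group-construction} and defers the proof to Lustig \cite[p174]{Lu93}; what you have sketched---the centre computation $Z(H_+) = \langle r^m \rangle$, the identification $H_+/\langle r^m\rangle \cong H \ast \Z/m$, and the Kurosh/torsion argument showing $H \ast \Z/m \twoheadrightarrow H$ is characteristic---is precisely Lustig's proof, so you have simply unpacked what the paper cites.
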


In order to prove this, we will first need the following lemma. The proof is identical to the one given in \cite[p174]{Lu93}.

\begin{lemma} \label{lemma:group-construction}
Let $G$ be a torsion free group and let $G_+ = (G \times \langle q \mid \hspace{-.8mm}-\rangle) \ast_{\langle q = r^m \rangle} \langle r \mid \hspace{-.8mm}-\rangle$ for some $m \ge 2$. Then the map $f : G_+ \twoheadrightarrow G$ which sends $r \mapsto 1$ is characteristic.
\end{lemma}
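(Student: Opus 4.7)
The plan is to view $G_+$ as the amalgamated free product
\[ G_+ = A \ast_C B, \qquad A = G \times \langle q \rangle, \qquad B = \langle r \rangle, \qquad C = \langle q = r^m \rangle, \]
and analyse the $G_+$-action on the associated Bass--Serre tree $T$. Since $A$, $B$ and $C$ are torsion-free, so is $G_+$. The subgroup $\Ker(f)$ is the normal closure of $r$, so it suffices to prove $\phi(r) \in \Ker(f)$ for every $\phi \in \Aut(G_+)$; the reverse containment $\Ker(f) \subseteq \phi(\Ker(f))$ then follows by applying the same argument to $\phi^{-1}$.

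The first main step is to identify $Z(G_+) = \langle q \rangle$. The inclusion $\supseteq$ is clear from the defining relations $[q, G] = 1$ and $q = r^m$. For $\subseteq$, a central element $z \in G_+$ is elliptic in $T$ and, being central, fixes every vertex in the orbit of any fixed vertex; hence $z$ lies in the largest normal subgroup of $G_+$ contained in $A$ or in $B$. A normal-form argument in $A \ast_C B$, using $m \ge 2$ and $G \ne 1$, shows both of these equal $\langle q \rangle$: if $g \in G \setminus \{1\}$ and $k \in \Z$, for instance, then $r(g q^k) r^{-1}$ has a three-syllable amalgamated normal form and so cannot lie in $A$. An analogous computation handles the core of $B$.

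Given this, fix $\phi \in \Aut(G_+)$. Then $\phi(q) = \phi(r)^m$ is central, so $\phi(q) = q^\ell$ for some $\ell \ne 0$, and $\phi(r)^m = q^\ell$ lies in $A$ and is thus elliptic in $T$. Since translation length on a tree scales by $m$ on $m$-th powers, $\phi(r)$ is itself elliptic, hence conjugate into $A$ or into $B$. If $\phi(r) = g r^j g^{-1}$ for some $g \in G_+$ and $j \in \Z$, then $f(\phi(r)) = f(g) f(r)^j f(g)^{-1} = 1$ directly. Otherwise $\phi(r) = g \alpha g^{-1}$ with $\alpha = a q^k \in A$; the identity $\alpha^m = g^{-1} q^\ell g = q^\ell$ in the direct product $A = G \times \langle q \rangle$ forces $a^m = 1$, and torsion-freeness of $G$ gives $a = 1$, so $\phi(r) = g q^k g^{-1} = q^k = r^{mk} \in \Ker(f)$.

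The main obstacle is the centre computation $Z(G_+) = \langle q \rangle$: both $m \ge 2$ and torsion-freeness of $G$ are essential here (the latter is also used at the end to extract $a = 1$ from $a^m = 1$), and without it the key reduction $\phi(q) \in \langle q \rangle$ fails. Once the centre is in hand, the remaining tree-action and normal-form arguments close up routinely.
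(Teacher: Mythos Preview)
Your argument is correct. The paper does not supply its own proof of this lemma; it simply defers to Lustig's argument in \cite[p174]{Lu93}. Your approach---identifying $Z(G_+)=\langle q\rangle$ via the amalgam structure (using $m\ge 2$ and $G\ne 1$), deducing that any automorphism sends $q=r^m$ to $q^{\pm 1}$, and then using ellipticity on the Bass--Serre tree together with torsion-freeness of $G$ to force $\phi(r)$ to be conjugate to a power of $r$---is essentially the strategy Lustig uses. One small omission: your centre computation tacitly assumes $G\ne 1$; the degenerate case $G=1$ (where $G_+\cong\Z$ and $f$ is the map to the trivial group) should be noted separately, but is of course immediate.
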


\begin{proof}[Proof of \cref{prop:group-construction}]
It is clear that $G_{(n)}$ is finitely presented. We now prove (i) by induction, noting that it is trivial in the case $n=1$.

Suppose (i) holds for $n$ and note that $G_{(n+1)} \cong (G_{(n)} \times \Z) \ast_{\Z} \Z$. It is well known that $K(\Z,1) \simeq S^1$ and so $\Z$ is of type $\FL$, $\cd(\Z)=1$ and $H^1(\Z;\Z[\Z])=0$. By \cref{lemma:cd-direct}, $G_{(n)} \times \Z$ is of type $\FL$ and $\cd(G_{(n)} \times \Z) = n+d$.  By \cref{lemma:cd-amalg}, this implies that $G_{(n+1)}$ is of type $\FL$ and $\cd(G_{(n+1)})=n+d$ as required.

Since $\cd(G_{(n)}) < \infty$, $G_{(n)}$ is torsion free for all $n$ \cite[Proposition 4.11]{Bi81}. By \cref{lemma:group-construction}, this implies that the map $f_{i+1}: G_{(i+1)} \twoheadrightarrow G_{(i)}$, $r_{i+1} \mapsto 1$ is characteristic for all $i \ge 1$. Hence $f = f_n \circ f_{n-1} \circ \cdots \circ f_{2}$ is characteristic by composition.
\end{proof}

\section{Proof of \cref{thm:main-SF}}
\label{section:proof-main-algebra}

Recall that the trefoil group $T$ is defined as $\pi_1(S^3 \, \setminus \, N(K))$ where $N(K)$ is the knot exterior of the trefoil knot $K \subseteq S^3$. It has presentation $\mathcal{P} = \langle x,y \mid x^2 = y^3 \rangle$.

Let $T''$ denote the second derived subgroup of $T$, i.e. $T'' = (T')'$, and let $f: T \twoheadrightarrow T/T''$ be the quotient map.
Note that $T/T''$ is polycyclic since $(T/T'')' \cong \Z^2$ and $(T/T'')/(T/T'')' \cong \Z$.
The following was shown by P. H. Berridge and M. J. Dunwoody \cite{BD79}, building upon previous work of Dunwoody \cite{Du76}.

\begin{thm}[Berridge-Dunwoody] \label{thm:BD}
There exists infinitely many rank one stably free $\Z T$-modules $S_i$ for $i \ge 1$ such that: 
\begin{clist}{(i)}
\item $S_i \oplus \Z T \cong \Z T^2$	.
\item There exists distinct primes $p_i$ for which $\F_{p_i} \otimes f_\#(S_j) \cong \F_{p_i} [T/T'']$ are isomorphic as $\F_{p_i} [T/T'']$-modules if and only if $i = j$.
\end{clist}
In particular, the $S_i$ are distinct up to $\Z G$-module isomorphism.
\end{thm}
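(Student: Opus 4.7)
The plan is to follow the Berridge-Dunwoody approach, which extends Dunwoody's earlier single-module construction to an infinite family parametrised by the exponents $2i+1$ and $3i+1$.

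First, I would construct the $S_i$ explicitly. The presentation $\mathcal{P} = \langle x,y \mid x^2=y^3\rangle$ yields a length-two free resolution $0 \to \Z T \to \Z T^2 \to \Z T \to \Z \to 0$, coming from the fact that $X_\mathcal{P}$ is aspherical. For each $i \ge 1$, one builds a unimodular row $(\alpha_i,\beta_i) \in \Z T^2$ involving the elements $x^{2i+1}$ and $y^{3i+1}$ (the exponents appearing in the Harlander-Jensen presentations), with an explicit left inverse produced from the relation $x^2 = y^3$. Defining
\[ S_i \;:=\; \ker\bigl(\Z T^2 \twoheadrightarrow \Z T,\;(u,v) \mapsto u\alpha_i + v\beta_i\bigr),\]
the splitting coming from unimodularity yields $S_i \oplus \Z T \cong \Z T^2$, proving (i).

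Next, to distinguish the $S_i$, reduce through $f : T \twoheadrightarrow T/T''$. The group $T/T''$ is metabelian and polycyclic (with $T/T' \cong \Z$ acting on $T'/T'' \cong \Z^2$), so $\F_p[T/T'']$ is Noetherian with tractable $K$-theory. A rank-one stably free module over a ring $R$ is classified up to isomorphism by a class in a suitable quotient of $\GL_2(R)$, which for $R = \F_p[T/T'']$ can be detected by a determinant-type invariant landing in the unit group modulo obvious relations. The module $f_\#(S_i) \otimes \F_p$ is then free precisely when this invariant is trivial. The core calculation is to identify the invariant as a concrete expression in the reduced forms of $x^{2i+1}-1$ and $y^{3i+1}-1$ inside $\F_p[T/T'']$, and to verify that it depends genuinely on $i$.

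Finally, for the choice of primes $p_i$, I would use a Dirichlet-style density argument: the invariants, after a suitable specialisation of $\F_p[T/T'']$ to a commutative subring, become a sequence of nonzero, $i$-dependent integers, so one can inductively pick $p_i$ dividing the $i$-th invariant but neither any previously chosen $p_j$ nor any invariant of $S_j$ with $j \ne i$ (in any specified finite range, with a subsequence taken to handle infinitely many $j$). With the $p_i$ pairwise distinct, an isomorphism $S_i \cong S_j$ for $i \ne j$ would force $f_\#(S_i) \otimes \F_{p_i}$ and $f_\#(S_j) \otimes \F_{p_i}$ to be simultaneously free or not, contradicting the construction. The principal obstacle is the central $K$-theoretic computation over $\F_p[T/T'']$: correctly identifying the invariant, computing it on $(\alpha_i,\beta_i)$ via the metabelian quotient, and showing that the resulting sequence in $i$ admits infinitely many distinct prime divisors---ultimately exploiting that $\Z[T/T'']$ contains a Noetherian commutative subring on which rank-one projectives can be read off via $\Pic$. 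This metabelian-reduction method is the heart of the original \cite{BD79} argument.
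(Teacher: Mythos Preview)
The paper does not prove this theorem; it is quoted from \cite{BD79}, with the essential content recorded in the remark immediately following the statement. Your construction of the $S_i$ matches that remark exactly: one takes the relation modules
\[
M_i = \Ker\Bigl(\cdot\begin{pmatrix} x^{2i+1}-1 \\ y^{3i+1}-1 \end{pmatrix} : \Z T^2 \twoheadrightarrow \Z T\Bigr),
\]
which are stably free of rank one because $\{x^{2i+1},y^{3i+1}\}$ generates $T$, and then sets $S_i = M_{\ell_i}$ for a suitable subsequence $\ell_i$.

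Where your proposal diverges from the paper is in the distinguishing step. You describe a general $K$-theoretic strategy (determinant-type invariants, $\Pic$ of a commutative subring, Dirichlet-style prime selection) and flag it as the principal obstacle. The paper's remark records that Berridge--Dunwoody actually obtain a sharp and elementary criterion: $\F_p \otimes f_\#(M_i) \cong \F_p[T/T'']$ if and only if $p \mid i(i+1)$. Once you have this, no density argument is needed; one simply chooses integers $\ell_i$ and distinct primes $p_i$ so that $p_i \mid \ell_j(\ell_j+1)$ holds precisely when $i=j$, which is an easy exercise. So your outline is correct in spirit, but the ``core calculation'' you anticipate having to do over $\F_p[T/T'']$ has a much cleaner outcome than your sketch suggests, and this is what makes the theorem go through without any genuine $K$-theory or analytic number theory.
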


\begin{remark} \label{remark:relation-module}
For $i \ge 0$, let $M_i = \Ker(\cdot\left(\begin{smallmatrix} x^{2i+1}-1 \\ y^{3i+1}-1 \end{smallmatrix}\right) : \Z T^2 \twoheadrightarrow \Z T)$ be the relation module for the generating set $\{x^{2i+1},y^{3i+1}\}$, which is a stably free $\Z T$-module of rank one. It was shown in \cite{BD79} that $\F_p \otimes f_\#(M_i) \cong \F_p[T/T'']$ as $\F_p[T/T'']$-modules if and only if $p \mid i(i+1)$. 
There exists integers $\ell_i$ for $i \ge 1$ and primes $p_i$ such that $p_i \mid \ell_j(\ell_j+1)$ if and only if $i=j$, and so we can take $S_i = M_{\ell_i}$ in \cref{thm:BD}. It is not known whether or not the $M_i$ are all distinct up to $\Z G$-module isomorphism.
\end{remark}

For the rest of this section, fix $k \ge 1$ and $n \ge 1$. Let $G = T_1 \ast \cdots \ast T_k$ where $T_j \cong T$ is the trefoil group and let $G_{(n)}$ be as defined in \cref{construction}.
Since $T$ is a knot group, $T$ has type $\FL$ and $\cd(T) =2$ \cite[p212]{Br82}.
By \cref{lemma:cd-amalg} and \cref{prop:group-construction}, this implies that $G_{(n)}$ has type $\FL$ and $\cd(G_{(n)}) = n+1$. The aim of the rest of this section will be to prove the following theorem which implies \cref{thm:main-SF}.

\begin{thm} \label{thm:main-SF-detailed}
For each $n \ge 1$ and $1 \le m \le k$, there exists infinitely many stably free $\Z G_{(n)}$-modules $\wh S_i$ for $i \ge 1$ such that:
\begin{clist}{(i)}
\item $\wh S_i \oplus \Z G_{(n)} \cong \Z G_{(n)}^{m+1}$.
\item $\wh S_i$ has no direct summand of the form $\Z G_{(n)}$.
\item The $\wh S_i$ for $i \ge 1$ are distinct up to $\Aut(G_{(n)})$-isomorphism of $\Z G_{(n)}$-modules.	
\end{clist}
\end{thm}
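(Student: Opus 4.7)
The plan is to set
\[ \widehat S_i := \iota_\#\Bigl( \bigoplus_{j=1}^m (\iota_j^G)_\#(S_i) \Bigr), \]
where $\iota \colon G \hookrightarrow G_{(n)}$ is the inclusion from \cref{section:cd(G)}, $\iota_j^G \colon T \hookrightarrow G = T_1 \ast \cdots \ast T_k$ denotes the inclusion of the $j$th free-product factor, and $S_i$ is the Berridge--Dunwoody module of \cref{thm:BD}. Property (i) will follow from the telescoping identity
\[ (\iota_j^G)_\#(S_i) \oplus \Z G \;\cong\; (\iota_j^G)_\#(S_i \oplus \Z T) \;\cong\; (\iota_j^G)_\#(\Z T^2) \;\cong\; \Z G^2, \]
which applied iteratively $m$ times converts $\bigoplus_{j=1}^m (\iota_j^G)_\#(S_i) \oplus \Z G$ into $\Z G^{m+1}$ over $\Z G$; the exact functor $\iota_\#$ then carries this identity to $\Z G_{(n)}$.

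For (ii) and (iii) I would exploit two characteristic maps: the retraction $f \colon G_{(n)} \twoheadrightarrow G$ from \cref{prop:group-construction}(ii), and the free-product map $f_k \colon G \twoheadrightarrow H := (T/T'') \ast \cdots \ast (T/T'')$ induced by the second-derived quotient $q \colon T \twoheadrightarrow T/T''$ of \cref{thm:BD}. The map $f_k$ is characteristic by \cref{prop:free-product-char} since $T$ is indecomposable and not infinite cyclic (having non-trivial centre generated by $x^2=y^3$) and all factors carry the same quotient $q$; hence $g := f_k \circ f \colon G_{(n)} \twoheadrightarrow H$ is characteristic by composition. Writing $\iota_j^H \colon T/T'' \hookrightarrow H$ for the $j$th factor inclusion, unwinding the induction identities yields
\[ g_\#(\widehat S_i) \;=\; \bigoplus_{j=1}^m (\iota_j^H)_\#\bigl(q_\#(S_i)\bigr). \]

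For (ii), suppose $\widehat S_i \cong M \oplus \Z G_{(n)}$; then $g_\#(\widehat S_i)$ acquires a $\Z H$-summand. Tensor with $\F_{p_\ell}$ for any $\ell \neq i$: by \cref{thm:BD}, $\F_{p_\ell} \otimes q_\#(S_i) \not\cong \F_{p_\ell}[T/T'']$, and because $T/T''$ is polycyclic and hence sofic, $\F_{p_\ell}[T/T'']$ is stably finite by \cref{thm:ES}, so this rank-one stably free module has no free summand. \cref{cor:Bergman}(ii) then forces $\F_{p_\ell} \otimes g_\#(\widehat S_i)$ to have no $\F_{p_\ell} H$-summand, a contradiction. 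For (iii), an $\Aut(G_{(n)})$-isomorphism $\widehat S_i \cong (\widehat S_{i'})_\theta$ with $i \neq i'$ propagates through $g$ by two applications of \cref{prop:modules-over-quotients} to an $\Aut(H)$-isomorphism $g_\#(\widehat S_i) \cong (g_\#(\widehat S_{i'}))_{\bar{\bar\theta}}$. Tensoring with $\F_{p_i}$, the left-hand side becomes $\bigoplus_j (\iota_j^H)_\#(\F_{p_i}[T/T'']) \cong \F_{p_i} H^m$ (free of rank $m$), whereas the same stable-finiteness plus \cref{cor:Bergman}(ii) argument as in (ii) shows the right-hand side has no $\F_{p_i} H$-summand. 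Since $(\F_{p_i} H)_\varphi \cong \F_{p_i} H$ for every $\varphi \in \Aut(H)$, freeness is invariant under $\Aut(H)$-twisting, giving the required contradiction.

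The main technical obstacle I anticipate is checking the hypotheses of \cref{prop:free-product-char} for $f_k$ and then faithfully propagating the chain of characteristic maps $G_{(n)} \twoheadrightarrow G \twoheadrightarrow H$ through \cref{prop:modules-over-quotients} while preserving the $\Aut$-equivariance used in (iii); once this bookkeeping is settled, the rest of the argument reduces to direct applications of \cref{thm:BD}, \cref{thm:ES}, and \cref{cor:Bergman}(ii).
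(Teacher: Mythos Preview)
Your proposal is correct and follows essentially the same approach as the paper. The only difference is organizational: the paper first isolates the case $n=1$ as a separate proposition (\cref{prop:SF-trefoil}), proving (i)--(iii) over $\Z G$ for the more general modules $S_{i_1,\ldots,i_m}$, and then lifts to $G_{(n)}$ via $\iota_\#$ and the characteristic retraction $f$; you instead compose the two characteristic quotients into a single map $g = f_k \circ f \colon G_{(n)} \twoheadrightarrow H$ and argue directly, which amounts to the same computation.
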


Note that the case $m = k$ is sufficient to establish \cref{thm:main-SF}. This result shows that the tree of stably free $\Z G_{(n)}$-modules has branching at all ranks $1 \le m \le k$.
We do not know whether branching occurs at ranks $\ge k+1$, even in the case $G = T$, i.e. when $k=1$.

In order to prove \cref{thm:main-SF-detailed}, we will begin with the following lemma.

\begin{lemma} \label{lemma:f=char}
Let $f_j : T_j \twoheadrightarrow T_j/(T_j)''$ be the quotient maps and let
\[ f: G \twoheadrightarrow (T_1/T_1'') \ast \cdots \ast (T_k/T_k'') \]
be the map induced by the $f_j$. Then $f$ is characteristic. 
\end{lemma}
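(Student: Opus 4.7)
The plan is to deduce this immediately from \cref{prop:free-product-char}, which is tailor-made for exactly this situation. The hypotheses I need to verify are: (a) each free factor $T_j \cong T$ is indecomposable and not infinite cyclic; (b) each quotient map $f_j : T_j \twoheadrightarrow T_j/T_j''$ is characteristic; and (c) since all $T_j$ are isomorphic and the maps $f_j$ are ``the same'' (chosen canonically via the derived series), they differ by automorphisms as required.

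For (a), I would note that $T$ is not infinite cyclic since it is non-abelian, and that $T$ is indecomposable as a free product. The quickest route is to use the fact that the trefoil group has non-trivial centre generated by $x^2 = y^3$, while any non-trivial free product of non-trivial groups has trivial centre by a standard normal form / Kurosh argument. So $T$ cannot split as a non-trivial free product.

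For (b), the subgroup $T_j'' = (T_j')'$ is characteristic in $T_j$: the commutator subgroup is fully invariant (hence characteristic), and iterating, every term of the derived series is fully invariant. Since $T_j''$ is characteristic in $T_j$, the map $f_j$ is characteristic in the sense defined just before \cref{prop:modules-over-quotients}. For (c), the derived-series construction is canonical, so under any fixed isomorphism $T_i \cong T_j$, the subgroup $T_i''$ is sent to $T_j''$ and the maps $f_i, f_j$ agree up to the induced automorphisms of $T_i, T_j$ and of $T_i/T_i'', T_j/T_j''$.

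With (a), (b), (c) in hand, \cref{prop:free-product-char} applies directly and gives that $f : G \twoheadrightarrow (T_1/T_1'') \ast \cdots \ast (T_k/T_k'')$ is characteristic. I do not expect any genuine obstacle here; the only slightly delicate point is indecomposability of $T$, and even that follows from a one-line centre argument.
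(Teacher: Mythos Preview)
Your proposal is correct and follows essentially the same route as the paper: verify that each $T_j'' \subseteq T_j$ is characteristic (via the derived series) and that $T$ is indecomposable and not infinite cyclic, then apply \cref{prop:free-product-char}. The paper simply asserts the indecomposability of $T$ without justification, so your centre argument is a welcome extra detail rather than a departure from the intended proof.
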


\begin{proof}
For any group $G$, it is well known that $G' \subseteq G$ is characteristic and so $G'' \subseteq G$ is characteristic also. Hence $f_j$ is characteristic for each $j$. Since $T$ is indecomposable and not infinite cyclic, $f$ is characteristic by \cref{prop:free-product-char}.
\end{proof}

For simplicity, we will begin by proving \cref{thm:main-SF-detailed} in the case $n=1$, i.e. where $G_{(n)}=G$.
From now on, fix $1 \le m \le k$. For integers $i_1, \cdots, i_m$, define
\[S_{i_1, \cdots, i_m} = {\iota_1}_\#(S_{i_1}) \oplus \cdots \oplus {\iota_m}_\#(S_{i_m})    \]
where $\iota_j : T_j \hookrightarrow G$ is the inclusion map.
We will now prove the following as a consequence of Bergman's theorem, which we will apply by using \cref{cor:Bergman}.

\begin{prop} \label{prop:SF-trefoil}
For integers $i_1, \cdots, i_m$, we have:
\begin{clist}{(i)}
\item $S_{i_1, \cdots, i_m} \oplus \Z G \cong \Z G^{m+1}$.
\item $S_{i_1,\cdots, i_m}$ has no direct summand of the form $\Z G$.
\item If $S_{i_1, \cdots, i_m} \cong S_{i_1',\cdots, i_m'}$ are $\Aut(G)$-isomorphic as $\Z G$-modules then, as sets, we have $\{i_1, \cdots, i_m\} = \{i_1',\cdots,i_m'\}$.
\end{clist}
\end{prop}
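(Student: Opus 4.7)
For part (i), the plan is to induct on $m$, absorbing one copy of $\Z G$ at each step. Since $(\iota_j)_\#$ is additive and sends $\Z T$ to $\Z G$, applying it to the Berridge--Dunwoody identity $S_{i_j} \oplus \Z T \cong \Z T^2$ from \cref{thm:BD}(i) yields $(\iota_j)_\#(S_{i_j}) \oplus \Z G \cong \Z G^2$. Feeding this into the induction and peeling off one summand at a time gives $S_{i_1,\ldots,i_m} \oplus \Z G \cong \Z G^{m+1}$.

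For part (ii), the strategy is to tensor with $\F_p$ for a well-chosen prime $p$ and apply \cref{cor:Bergman}(ii). Pick any integer $q \notin \{i_1,\ldots,i_m\}$ and set $p = p_q$. Since $T$ is sofic, $\F_p T$ is stably finite by \cref{thm:ES}. I claim each $S_{i_j} \otimes_\Z \F_p$ has no $\F_p T$-summand: it is stably free of rank $1$, so if it had such a summand then stable finiteness would force $S_{i_j} \otimes \F_p \cong \F_p T$, and the base-change identity $\F_p \otimes f_\#(S_{i_j}) \cong f_\#(S_{i_j} \otimes \F_p) \cong \F_p[T/T'']$ would contradict \cref{thm:BD}(ii) since $q \neq i_j$. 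Because tensoring commutes with induction, $S_{i_1,\ldots,i_m} \otimes \F_p$ is an induced $\F_p G$-module whose components have no $\F_p T_j$-summand, so \cref{cor:Bergman}(ii) yields no $\F_p G$-summand. Any $\Z G$-summand of $S_{i_1,\ldots,i_m}$ would base-change to an $\F_p G$-summand, giving (ii).

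For part (iii), the plan is to pass to the characteristic quotient $f : G \twoheadrightarrow Q := \ast_{j=1}^k (T_j/T_j'')$ from \cref{lemma:f=char} and, for each $q \geq 1$, define an $\Aut(G)$-invariant detecting whether $q$ lies in the index set. Given an $\Aut(G)$-isomorphism $S_{i_1,\ldots,i_m} \cong (S_{i_1',\ldots,i_m'})_\theta$, \cref{prop:modules-over-quotients} supplies an $\Aut(Q)$-isomorphism between the $f_\#$-images. Let $A_q(M) \in \{0,1\}$ record whether $\F_{p_q} \otimes f_\#(M)$ admits $\F_{p_q} Q$ as a direct summand; since $(\F_{p_q} Q)_{\bar\theta} \cong \F_{p_q} Q$ for all $\bar\theta \in \Aut(Q)$, this quantity is invariant under $\Aut(G)$-isomorphism. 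I then show $A_q(S_{i_1,\ldots,i_m}) = 1$ iff $q \in \{i_1,\ldots,i_m\}$: if $q = i_j$ then $\F_{p_q} \otimes f_\#(S_{i_j}) \cong \F_{p_q}[T_j/T_j'']$ by \cref{thm:BD}(ii), which after induction contributes an $\F_{p_q} Q$-summand; if $q \notin \{i_1,\ldots,i_m\}$, the argument of (ii) --- now using that $T/T''$ is polycyclic hence sofic, so \cref{thm:ES} gives stable finiteness of $\F_{p_q}[T/T'']$ --- shows each component has no $\F_{p_q}[T_j/T_j'']$-summand, and \cref{cor:Bergman}(ii) rules out any $\F_{p_q} Q$-summand. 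Ranging over all $q$ forces $\{i_1,\ldots,i_m\} = \{i_1',\ldots,i_m'\}$.

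The main obstacle I foresee is the interplay between the $\Aut(G)$-twist and the free-transfer phenomena underlying \cref{thm:Bergman}: a naive appeal to \cref{cor:Bergman}(i) would require pinning down the individual components of the induced module up to an $\Aut(Q)$-action that can permute and twist the factors of $Q$, which is awkward. The workaround is to use the coarser invariant $A_q$, which is blind to \emph{which} factor contributes the $\F_{p_q} Q$-summand, and to exploit the ``if and only if'' in \cref{thm:BD}(ii) to convert the presence of such a summand into the clean combinatorial statement $q \in \{i_1,\ldots,i_m\}$.
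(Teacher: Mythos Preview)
Your proof is correct and follows essentially the same strategy as the paper: part (i) via \cref{thm:BD}(i), and parts (ii)--(iii) by combining stable finiteness from \cref{thm:ES} with \cref{cor:Bergman} and the detection criterion of \cref{thm:BD}(ii). The only tactical difference is that for (ii) you apply Bergman directly over $\F_p G$ (using that $T$ is sofic), whereas the paper first passes to $\F_p\bar G$ with $\bar G = \ast_j T_j/T_j''$ and applies Bergman there (using that $T/T''$ is polycyclic); the paper's choice has the mild advantage that the same setup is reused verbatim in (iii), where the passage to $\bar G$ is needed anyway for $\Aut(G)$-invariance via \cref{lemma:f=char}. Your invariant $A_q$ in (iii) is equivalent to the paper's contradiction argument.
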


\begin{proof}
Part (i) is a straightforward consequence of \cref{thm:BD} (i). 

Let $\bar{G} = \ast_{j=1}^n T_j/T_j''$ and let $\text{$\bar{\iota}_j$} : T_j/T_j'' \hookrightarrow \bar{G}$ be inclusion. By \cref{thm:BD} (ii), there exists $p$ such that $\F_p \otimes {f_j}_\#(S_{i_j}) \not \cong \F_p[T_j/T_j'']$ for all $j$. Fix $p$ and note that:
\[ \F_p \otimes f_\#(S_{i_1,\cdots, i_m}) \cong \textstyle \bigoplus_{j=1}^m \F_p \otimes (f \circ \iota_j)_\#(S_{i_j}) \cong \bigoplus_{j=1}^m \text{$\bar{\iota}_j$}_\#(\F_p \otimes {f_j}_\#(S_{i_j})) \]
is an induced $\F_p \bar{G}$ module. 
In order to show that \cref{cor:Bergman} applies, it remains to show that $\F_p \otimes {f_j}_\#(S_{i_j})$ has no direct summand of the form $\F_p[T_j/T_j'']$.

If $\F_p \otimes {f_j}_\#(S_{i_j}) \cong S \oplus \F_p[T_j/T_j'']$, then $S \oplus \F_p[T_j/T_j'']^2 \cong \F_p[T_j/T_j'']^2$.
Since $T_j/T_j''$ is polycyclic, it is amenable and so sofic. By \cref{thm:ES}, $\F_p[T_j/T_j'']$ is stably finite and so $S= 0$. Hence $\F_p \otimes {f_j}_\#(S_{i_j}) \cong \F_p[T_j/T_j'']$, which is a contradiction.

To show (ii) note that, if $S_{i_1,\cdots, i_m}$ has a direct summand $\Z G$, then $\F_p \otimes f_\#(S_{i_1,\cdots, i_m})$ has a direct summand $\F_p \bar{G}$. This contradicts \cref{cor:Bergman} (ii).

To show (iii), suppose that $\{i_1, \cdots, i_m\} \ne \{i_1',\cdots,i_m'\}$ as sets. By symmetry, we can assume that there exists $i_r' \not \in \{i_1, \cdots, i_m\}$. Let $p = p_{i_r'}$ in the notation of \cref{thm:BD}. By the argument above, $\F_p \otimes f_\#(S_{i_1,\cdots, i_m})$ has no direct summand of the form $\F_p \bar{G}$. On the other hand, $\F_p \otimes {f_r}_\#(S_{i_r'}) \cong \F_p[T_r/T_r'']$ which implies that
\begin{align*} \F_p \otimes f_\#(S_{i_1',\cdots, i_m'}) & \cong \textstyle \bigoplus_{j=1, j \ne r}^m \text{$\bar{\iota}_j$}_\#(\F_p \otimes {f_j}_\#(S_{i_j})) \oplus \F_p \bar{G} \\
& \cong \textstyle \bigoplus_{j=1, j \ne r}^{m-1} \text{$\bar{\iota}_j$}_\#(\F_p \otimes {f_j}_\#(S_{i_j})) \oplus \F_p \bar{G}^2 \cong \cdots \cong \F_p \bar{G}^m.	
\end{align*}
If $S_{i_1,\cdots, i_m} \cong S_{i_1',\cdots, i_m'}$ are $\Aut(G)$-isomorphic, then $S_{i_1,\cdots, i_m} \cong (S_{i_1',\cdots, i_m'})_\theta$ for some $\theta \in \Aut(G)$. By \cref{lemma:f=char}, $f$ is characteristic and so, by \cref{prop:modules-over-quotients}, $f_\#((S_{i_1',\cdots, i_m'})_\theta) \cong (f_\#(S_{i_1',\cdots, i_m'}))_{\bar{\theta}}$ for some $\bar{\theta} \in \Aut(\bar{G})$. In particular, we have:
\[ \F_p \otimes f_\#(S_{i_1,\cdots, i_m}) \cong (\F_p \otimes f_\#(S_{i_1',\cdots, i_m'f}))_{\bar{\theta}} \cong (\F_p \bar G^m)_{\bar{\theta}} \cong \F_p \bar{G}^m\]
which is a contradiction.
\end{proof}

\begin{proof}[Proof of \cref{thm:main-SF-detailed}]
Let $\iota : G \hookrightarrow G_{(n)}$ and $f: G_{(n)} \twoheadrightarrow G$ be as defined in \cref{section:cd(G)}. This satisfies $f \circ \iota = \id_G$ and, by \cref{prop:group-construction}, $f$ is characteristic. Define $\wh S_i = \iota_\#(\wh S_{i_1, \cdots, i_m})$, where $i_j = i$ for all $j$. By \cref{prop:SF-trefoil}, it is now straightforward to check that the $\wh S_i$ has the required properties.
\end{proof}

We conclude this section with extended remarks on \cref{thm:main-SF} and  \cref{thm:main-SF-detailed}.

\sss{Relation modules}

By \cref{remark:relation-module}, $S_i$ is the relation module for the generating set $\{x^{2\ell_i+1},y^{3\ell_i+1}\}$ of $T$. It follows that $S_{i_1, \cdots, i_m}$ is the relation module for the generating set $\{x_i^{2\ell_i+1},y_i^{3\ell_i+1}\}_{i=1}^k$ of $G = T_1 \ast \cdots \ast T_k$ where $T_i = \langle x_i, y_i \mid x_i^2 = y_i^3 \rangle$.

\sss{Change of field}

In the proof of \cref{prop:SF-trefoil}, the $\Z G$-modules were distinguished by passing to $\F_p G$ for various $p$. An alternate approach is to instead pass to $\Q G$ and use the results of Lewin \cite{Le82}.
If $\F = \F_p$ or $\Q$, then one can show:

\begin{thm} \label{thm:COF-SF}
Let $k \ge 1$ and let $G = T_1 \ast \cdots \ast T_k$. Then there exists a stably free $\Z G$-module $S$ of rank $k$ such that $S \otimes \F$ is a non-free stably free $\F G$-module.	
\end{thm}

It would be interesting to know if one could detect infinitely many distinct stably free $\Z G$-modules of rank $k$ on $\F G$ for some $\F = \F_p$ or $\Q$, even in the case $k=1$.

\sss{Alternate constructions}

There are more ways to deduce \cref{thm:main-SF} in the case $d \ge 3$ from the case $d=2$.
By \cref{prop:SF-trefoil} and the proof of \cref{thm:main-SF-detailed}, it suffices to find a finitely presented group $G$ with $\cd(G) = d$ and a characteristic quotient $f : G \twoheadrightarrow \ast_{i=1}^N T$ for some $N \ge k$.
Two such constructions are as follows.
\begin{clist}{(1)}
\item 
Let $G = \ast_{i=1}^r (\ast_{j=1}^{n_i} T)_{(d-1)}$ where $1 \le n_1 \le \cdots \le n_r$ and $N = \sum_{i=1}^r n_i$. Then $\cd(G) = d$ and there is a characteristic quotient $f : G \twoheadrightarrow \ast_{i=1}^N T$.
For example, we can take $G = (\ast_{i=1}^k T)_{(d-1)}$ as above, or $G = \ast_{i=1}^k T_{(d-1)}$ (see \cref{thm:main-SF-further-detailed}).

\item
Let $G = (\ast_{i=1}^N T) \times \Gamma$ where $\Gamma$ is a finitely presented group with $\cd(\Gamma) = d-2$, $Z(\Gamma)=1$ and which does not contain $\ast_{i=1}^N T$ as a direct factor. By \cref{lemma:cd-direct}, we have $\cd(G) = d$. If $N \ge 2$, then $Z(\ast_{i=1}^N T)=1$ and it can be deduced from \cite[Corollary 2.2]{Jo83} that $f: G \twoheadrightarrow \ast_{i=1}^N T$ is characteristic. 

For example: If $d=3$, let $\Gamma$ be a free group of rank $\ge 2$.
If $d = 4$, let $\Gamma$ be a surface group of genus $\ge 2$.
If $d \ge 5$, let $\Gamma \subseteq L$ be a cocompact torsion free lattice in a non-compact simple Lie group $L$ with dimension $d-2$ over its maximal compact subgroup. 
Note that there are infinitely many such $\Gamma$ up to commensurability.
I am indebted to F. E. A. Johnson for this observation.
\end{clist}

\section{Module invariants of CW-complexes}
\label{section:module-invariants}

Let $X$ be a CW-complex and recall that its cellular chain complex $C_*(\wt X)$ is a chain complex of free $\Z[\pi_1(X)]$-modules under the monodromy action. The chain homotopy type of $C_*(\wt X)$ is a homotopy invariant for $X$ and so, for all $n$, the $\Z[\pi_1(X)]$-module $H_n(C_*(\wt X))$ is also a homotopy invariant.

If $G$ is a group and $\rho: \pi_1(X) \cong G$, then every $\Z[\pi_1(X)]$-module $M$ can be converted to a $\Z G$-module with action $g \cdot_{\Z G} m := \rho^{-1}(g) \cdot_{\Z[\pi_1(X)]} m$ for $g \in G$ and $m \in M$. In this notation, $H_n(C_*(\wt X))_\rho$ is a $\Z G$-module. We will denote this by $H_n(X;\Z G)$ when $\rho$ is understood. If $\rho' : \pi_1(X) \cong G$ and $\theta = \rho \circ (\rho')^{-1} \in \Aut(G)$, then $H_n(C_*(\wt X))_{\rho'} \cong (H_n(C_*(\wt X))_\rho)_\theta$. In particular, the $\Aut(G)$-isomorphism class of $H_n(X;\Z G)$ is a homotopy invariant and is independent of the choice of $\rho$.

The aim of this section will be to consider how $H_n(X;\Z G)$ changes under wedge product. We will also give a mild variation of this invariant under group quotients.

\subsection{Homology of a wedge product}
\label{subsection:wedge}

The following is presumably well-known. However, we were not able to locate a suitable reference in the literature.

\begin{prop} \label{prop:CW-of-a-wedge}
Let $X_1$, $X_2$ be CW-complexes with a single 0-cell such that $\pi_1(X_k) \cong G_k$. Let $X = X_1 \vee X_2$ which has $\pi_1(X) \cong G$ where $G= G_1 \ast G_2$. Then:
\[ 
C_i(\wt X) = 
\begin{cases}
{\iota_1}_\#(C_i(\wt X_1)) \oplus {\iota_2}_\#(C_i(\wt X_2)), & \text{if $i \ge 1$} \\
\Z G , & \text{if $i=0$}
\end{cases}
\]
where $\partial_i = {\iota_1}_\#(\partial^{X_1}_{i}) \oplus {\iota_2}_\#(\partial^{X_2}_{i})$ for $i \ge 2$, $\partial_1 = ({\iota_1}_\#(\partial^{X_1}_{1}), {\iota_1}_\#(\partial^{X_2}_{1}))$ and $\partial_0 = \varepsilon_G$.
\end{prop}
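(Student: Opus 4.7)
The plan is to work directly with the universal cover $\wt X$ of $X = X_1 \vee X_2$, describing it as a union of translates of (canonical copies of) $\wt{X_1}$ and $\wt{X_2}$ attached along the $0$-skeleton. Fix a basepoint $\tilde{x}_0 \in \wt X$ lying above $x_0$. Since the inclusion $G_k \hookrightarrow G = G_1 \ast G_2$ is injective, the inclusion $\iota_k : X_k \hookrightarrow X$ is $\pi_1$-injective, and so $X_k$ lifts to a subcomplex of $\wt X$ containing $\tilde{x}_0$; this subcomplex is canonically identified with $\wt{X_k}$, and its $G$-translates $\{g \cdot \wt{X_k} : gG_k \in G/G_k\}$ cover $\wt X$ and are mutually disjoint in dimensions $\ge 1$ (since $X_1 \cap X_2 = \{x_0\}$ in $X$). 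Equivalently, this is the description of $\wt X$ as the tree of spaces associated with the Bass--Serre decomposition of $G$.

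Next I would identify the chain modules. In dimension $0$, $X$ has a single $0$-cell, so the $G$-orbit $G \cdot \tilde{x}_0$ is a free $\Z G$-basis, giving $C_0(\wt X) = \Z G$ with $\partial_0 = \varepsilon_G$. For $i \ge 1$, each $i$-cell $\sigma$ of $X$ lies in exactly one $X_k$; choose the lift $\tilde{\sigma} \subset \wt{X_k}$. Its $G_k$-orbit is part of a free $\Z G_k$-basis of $C_i(\wt{X_k})$, and its $G$-orbit is part of a free $\Z G$-basis of $C_i(\wt X)$. The assignment $g \otimes \tilde{\sigma} \mapsto g \cdot \tilde{\sigma}$ then yields a canonical $\Z G$-linear isomorphism ${\iota_k}_\#(C_i(\wt{X_k})) = \Z G \otimes_{\Z G_k} C_i(\wt{X_k}) \to C_i(\wt X)_k$, where $C_i(\wt X)_k$ denotes the sub-$\Z G$-module spanned by cells of $\wt X$ coming from $X_k$. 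Summing over $k$ gives the decomposition claimed in the statement.

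For the boundary maps I would use that, for $i \ge 2$, the attaching map of any $i$-cell of $X_k$ lands in $X_k^{(i-1)}$, and hence its chosen lift has boundary supported in $\wt{X_k}^{(i-1)} \subset \wt X$. Under the identification above this is precisely the assertion that $\partial_i$ restricts to ${\iota_k}_\#(\partial_i^{X_k})$ on the $k$-th summand. For $i = 1$, the boundary of any $1$-cell of $X_k$ lies in $X_k^{(0)} = \{x_0\}$, so $\partial_1^{X_k}$ takes values in $C_0(\wt{X_k}) = \Z G_k$; inducing up then lands in ${\iota_k}_\#(\Z G_k) \cong \Z G = C_0(\wt X)$ under the canonical isomorphism $\Z G \otimes_{\Z G_k} \Z G_k \cong \Z G$, which yields the stated formula for $\partial_1$.

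The only real obstacle is bookkeeping: making sure the chosen lifts of cells of $X_k$ inside $\wt{X_k} \subset \wt X$ are genuinely compatible, so that the identification ${\iota_k}_\#(C_i(\wt{X_k})) \hookrightarrow C_i(\wt X)$ is $\Z G$-equivariant rather than merely $\Z G_k$-equivariant. This amounts to verifying that the $G$-action on $\wt X$ permutes translates of $\wt{X_k}$ according to left multiplication on $G/G_k$, which is immediate from the pushout/tree-of-spaces description in the first paragraph. Once this is set up, everything reduces to standard cellular chain complex computations.
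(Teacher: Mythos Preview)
Your proposal is correct and follows essentially the same approach as the paper: both describe $\wt X$ as a tree of spaces built from translates of $\wt{X_1}$ and $\wt{X_2}$ (the paper cites the Scott--Wall graph-of-spaces construction, you invoke the Bass--Serre description), identify the $G$-action on these translates via the cosets $G/G_k$, and then read off the cellular chain modules and boundary maps. The paper is more explicit in writing down the vertex and edge sets of the underlying tree, whereas you appeal directly to the $\pi_1$-injectivity of $X_k \hookrightarrow X$ to obtain the lifted copies of $\wt{X_k}$, but the substance of the argument is the same.
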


\begin{proof}
It suffices to compute an explicit model for $\wt X$ in terms of $\wt X_1$ and $\wt X_2$. Such a model, which is often attributed to Scott-Wall \cite{SW79}, is provided by taking the graph of spaces structure on $X = X_1 \vee X_2$ and lifting it to $\wt X$. 

Define a graph $(V,E)$ with vertex set $V = V(X_1) \sqcup V(X_2)$ where $V(X_1)$ is the set of elements in $G_1 \ast G_2$ with final term in $G_2$, i.e. the identity $e$ as well as the elements of the form $g_n \cdots g_1 g_1$ for $n \ge 1$ where $g_i \in G_2 \setminus \{ 1 \}$ when $i$ is odd and $g_i \in G_1 \setminus \{ 1 \}$ otherwise. Define $V(X_2)$ similarly. Note that, whilst $V(X_1) \cap V(X_2) = \{1\}$ as subsets of $G_1 \ast G_2$, the elements $1 \in V(X_i)$ are not identified in $V$.

Define $E = \bigsqcup_{v \in V(X_1)} (G_1 \, \setminus \, \{1\})_v \sqcup \bigsqcup_{v \in V(X_2)} (G_2 \, \setminus \, \{1\})_v \sqcup \{e_{1,1} \}$ where, for each $v \in V(X_1)$ and $g \in G_1 \, \setminus \, \{1\}$, we have a directed edge $e_{v,vg} = (g)_v$ from $v$ to $vg$ which is labeled by $g \in G$. Similarly for $V(X_2)$ and $G_2$. The edge $e_{1,1}$ from $1 \in V(X_1)$ to $1 \in V(X_2)$ is labeled by $1 \in G$.

Let $\ast \in X_i$ denote the $0$-cell and, for each $g \in G_i$, let $\ast_g \in \wt X_i$ denote its corresponding lift.
Our model is the CW-complex
\[ X_{(V,E)} = \left(\bigsqcup_{v \in V(X_1)} (\wt X_1)_v \sqcup \bigsqcup_{v \in V(X_2)} (\wt X_2)_v \right)/\sim \]
where, if we have a directed edge $e_{v_1,v_2} \in E$ with label $g \in G$, then $(\ast_g)_{v_1} \sim (\ast_1)_{v_2}$ where, if $v_1 \in V(X_i)$, then $(\ast_g)_{v_1} \in (\wt X_i)_{v_1}$ and similarly for $(\ast_1)_{v_2}$. 
By comparing with the construction in \cite{SW79}, we have $\wt X \simeq X_{(V,E)}$.

We now determine the induced action of $G = G_1 \ast G_2$ on $X_{(V,E)}$. Note that $G_1$ acts $(\wt X_1)_1$ by monodromy and freely permutes the $\ast_g \in (\wt X_1)_1$. This action extends to all of $X_{(V,E)}$ inductively, and similarly for the action of $G_2$ on $(\wt X_2)_2$. Since $G = \langle G_1, G_2 \rangle$, this determines the full action of $G$ on $X_{(V,E)}$. 

It now remains to read off the cell structure of $X_{(V,E)}$ under this $G$-action. For $i \ge 1$, the $i$-cells lie in the interior of the copies of $\wt X_1$, $\wt X_2$ and so are unaffected by the relation $\sim$. This implies that:
\[C_i(X_{(V,E)}) = \bigoplus_{v \in V(X_1)} v \cdot C_i(\wt X_1) \oplus \bigoplus_{v \in V(X_2)} v \cdot C_i(\wt X_2) \]
as an abelian group. Since $G$ acts on the $V(X_j)$ in the natural way, and the elements of $V(X_j)$ are coset representatives for $G/G_j$, we have that: 
\[ \bigoplus_{v \in V(X_1)} v \cdot C_i(\wt X_1) \cong \Z G \otimes_{\Z G_j} C_i(\wt X_j) \cong {\iota_j}_\#(C_i(\wt X_j)) \]
as $\Z G$-modules. We can determine $C_0(X_{(V,E)})$ and the $\partial_i$ similarly.
\end{proof}

\begin{corollary} \label{cor:pi_2-of-wedge}
Let $X_1$ and $X_2$ be CW-complexes with a single $0$-cell such that $\pi_1(X_i) \cong G_i$. Let $X = X_1 \vee X_2$ which has $\pi_1(X) \cong G$ where $G= G_1 \ast G_2$. Then:
\[ H_n(X;\Z G) \cong {\iota_1}_\#(H_n(X; \Z G_1)) \oplus {\iota_2}_\#(H_n(X;\Z G_2)). \]
\end{corollary}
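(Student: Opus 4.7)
The plan is to read off the homology directly from the chain-level description in Proposition \ref{prop:CW-of-a-wedge}. The key observation is that the induction functor ${\iota_j}_\# = \Z G \otimes_{\Z G_j} -$ is exact, since $\Z G$ is free as a right $\Z G_j$-module on any set of right coset representatives for $G/G_j$; hence ${\iota_j}_\#$ commutes with taking kernels, images, and homology of chain complexes of $\Z G_j$-modules. For $n \ge 2$ this already gives the result, because Proposition \ref{prop:CW-of-a-wedge} shows that the piece of $C_*(\wt X)$ in degrees $\ge 1$ coincides with ${\iota_1}_\#(C_*(\wt X_1)) \oplus {\iota_2}_\#(C_*(\wt X_2))$ together with its induced boundary maps, so applying $H_n$ commutes with the direct sum and the induction.

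The main difficulty is the case $n = 1$, since $\partial_1$ does not split as a direct sum: it is the sum map into a single copy of $\Z G = C_0(\wt X)$, reflecting the fact that the two basepoints are identified in the wedge. To handle this I would form a short exact sequence of chain complexes of $\Z G$-modules
\[
0 \to \Z G[0] \longrightarrow {\iota_1}_\#(C_*(\wt X_1)) \oplus {\iota_2}_\#(C_*(\wt X_2)) \longrightarrow C_*(\wt X) \to 0,
\]
where $\Z G[0]$ denotes $\Z G$ concentrated in degree $0$, the left map is the anti-diagonal inclusion $a \mapsto (a,-a)$, and the right map is the identity in degrees $\ge 1$ and the sum $(a,b) \mapsto a+b$ in degree $0$. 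That this is indeed a short exact sequence of chain complexes is immediate from the explicit boundary formulas in Proposition \ref{prop:CW-of-a-wedge}.

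Taking the associated long exact sequence in homology, and using that $H_n({\iota_j}_\#(C_*(\wt X_j))) \cong {\iota_j}_\#(H_n(X_j; \Z G_j))$ by exactness of induction, the relevant portion around degree $1$ becomes
\[
0 \to {\iota_1}_\#(H_1(X_1; \Z G_1)) \oplus {\iota_2}_\#(H_1(X_2; \Z G_2)) \to H_1(X; \Z G) \xrightarrow{\delta} \Z G \xrightarrow{\psi} \Z[G/G_1] \oplus \Z[G/G_2] \to \Z \to 0,
\]
where $\psi$ sends $a \mapsto (\bar a, -\bar a)$ via the natural quotient maps $\Z G \twoheadrightarrow \Z[G/G_j]$. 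To finish I would invoke the cellular chain complex of the (contractible) Bass--Serre tree for $G = G_1 \ast G_2$, which is the exact sequence $0 \to \Z G \to \Z[G/G_1] \oplus \Z[G/G_2] \to \Z \to 0$ whose first map coincides with $\psi$. Therefore $\psi$ is injective, $\delta = 0$, and the long exact sequence degenerates to the required isomorphism. The same argument (now with $H_0$ in place of $H_1$) handles the degenerate degree $0$ case, though by connectedness both sides already vanish after passing to reduced homology.
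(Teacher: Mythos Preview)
Your first paragraph is exactly the argument the paper intends: the corollary is stated without proof immediately after Proposition~\ref{prop:CW-of-a-wedge}, and for $n \ge 2$ the chain complex in degrees $\ge 1$ splits as a direct sum of induced complexes with induced differentials, so exactness of ${\iota_j}_\#$ (freeness of $\Z G$ over $\Z G_j$) gives the result. This is all the paper needs, since the corollary is only ever applied to $\pi_n(X) \cong H_n(X;\Z G)$ for $(G,n)$-complexes with $n \ge 2$.

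Your treatment of $n = 1$ via the short exact sequence of chain complexes and the Bass--Serre tree sequence $0 \to \Z G \to \Z[G/G_1] \oplus \Z[G/G_2] \to \Z \to 0$ is correct and pleasantly more general, but it is unnecessary in the setting of Proposition~\ref{prop:CW-of-a-wedge}: there $C_*(\wt X)$ and $C_*(\wt X_j)$ are the cellular chain complexes of \emph{universal} covers, so $H_1$ vanishes on both sides and the statement is trivial in that degree. Your argument would be the right one if the proposition were stated for arbitrary regular covers. (The paper's remark that one could instead use Mayer--Vietoris with local coefficients is closer in spirit to your long exact sequence approach.)

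One small correction: your closing remark about $n = 0$ is not right. The long exact sequence in degree $0$ reads $\Z G \xrightarrow{\psi} \Z[G/G_1] \oplus \Z[G/G_2] \to H_0(X;\Z G) \to 0$, which identifies $H_0(X;\Z G) \cong \Z$ with the \emph{cokernel} of $\psi$, not with the middle term. In particular the corollary as literally stated fails for $n = 0$ (one side is $\Z$, the other is $\Z[G/G_1] \oplus \Z[G/G_2]$); it is implicitly a statement about $n \ge 1$.
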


\begin{remark}
This could be deduced from the Mayer-Vietoris sequence for homology with local coefficients \cite[Theorem 2.4]{Wh78}, though the above argument is more direct.
\end{remark}

\subsection{Homology under group quotients} 
\label{subsection:change-of-group}

Let $X$ be a CW-complex with $\rho : \pi_1(X) \cong G$ and let  $C_*(\wt X)_\rho$ be the corresponding chain complex of $\Z G$-modules. 
If $f: G \twoheadrightarrow H$ is a quotient of groups, then $f_\#(C_*(\wt X)_\rho)$ is a chain complex of free $\Z H$-modules with boundary maps $\id_{\Z H} \otimes \partial_i$, and $H_n(f_\#(C_*(\wt X)_\rho))$ is a $\Z H$-module. 
We will denote this by $H_n(X;\Z H)$ when $f$ and $\rho$ are understood.

Subject to conditions on $f$, this give an additional homotopy invariant for $X$.

\begin{prop} \label{prop:ZH-homology}
If $f$ is characteristic, then the $\Aut(H)$-isomorphism class of $H_n(X;\Z H)$ is a homotopy invariant and is independent of the choice of $\rho$.	
\end{prop}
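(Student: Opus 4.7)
The plan is to reduce both assertions directly to \cref{prop:modules-over-quotients}, applied componentwise to the cellular chain complex. The key observation is that the twist $M \mapsto M_\theta$ modifies only the $\Z G$-action and leaves the underlying abelian group unchanged, so it passes verbatim to chain complexes, chain maps and chain homotopies; moreover the isomorphism $f_\#(M_\theta) \cong (f_\#(M))_{\bar{\theta}}$ of \cref{prop:modules-over-quotients} is natural in $M$, and so assembles into an isomorphism at the level of chain complexes.

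For independence of $\rho$, suppose $\rho, \rho' : \pi_1(X) \cong G$ and set $\theta = \rho \circ (\rho')^{-1} \in \Aut(G)$. Unwinding the definitions in \cref{subsection:Aut(G)}, one checks that $C_*(\wt X)_{\rho'} = (C_*(\wt X)_\rho)_\theta$ as chain complexes of $\Z G$-modules. Applying $f_\#$ in each degree and using \cref{prop:modules-over-quotients} (which applies since $f$ is characteristic), we obtain a chain isomorphism $f_\#(C_*(\wt X)_{\rho'}) \cong (f_\#(C_*(\wt X)_\rho))_{\bar{\theta}}$ of $\Z H$-chain complexes, where $\bar{\theta} \in \Aut(H)$ is the image of $\theta$ under the induced map $\bar{\cdot} : \Aut(G) \to \Aut(H)$. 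Passing to $H_n$ gives the required $\Aut(H)$-isomorphism between the two candidate definitions of $H_n(X;\Z H)$.

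For homotopy invariance, let $\varphi: X \to Y$ be a pointed homotopy equivalence and fix $\rho_X : \pi_1(X) \cong G$ and $\rho_Y : \pi_1(Y) \cong G$. The lift $\wt\varphi: \wt X \to \wt Y$ induces a chain homotopy equivalence $\wt\varphi_* : C_*(\wt X) \to C_*(\wt Y)$ which is equivariant with respect to $\varphi_* : \pi_1(X) \to \pi_1(Y)$. Setting $\theta = \rho_Y \circ \varphi_* \circ \rho_X^{-1} \in \Aut(G)$, a direct calculation shows that $\wt\varphi_*$ becomes a chain homotopy equivalence $C_*(\wt X)_{\rho_X} \to (C_*(\wt Y)_{\rho_Y})_\theta$ of $\Z G$-chain complexes. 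Applying $f_\#$ and \cref{prop:modules-over-quotients} once more yields a chain homotopy equivalence between the $\Z H$-chain complexes $f_\#(C_*(\wt X)_{\rho_X})$ and $(f_\#(C_*(\wt Y)_{\rho_Y}))_{\bar{\theta}}$, whose $n$-th homology gives the desired $\Aut(H)$-isomorphism $H_n(X;\Z H) \cong (H_n(Y;\Z H))_{\bar{\theta}}$.

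I do not anticipate any real obstacle: all the content is already packaged in \cref{prop:modules-over-quotients}, and the only ingredient beyond that is the standard fact that a homotopy equivalence of CW-complexes lifts to a $\pi_1$-equivariant chain homotopy equivalence of universal cover chain complexes. The only mild care required is in the bookkeeping that matches up the two different $\pi_1$-actions via the comparison automorphism $\theta$ before twisting.
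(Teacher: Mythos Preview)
Your proposal is correct and follows essentially the same approach as the paper: both arguments reduce to the observation that $f_\#$ preserves chain homotopy equivalences together with \cref{prop:modules-over-quotients} applied to the chain complex to handle the twist by $\theta \in \Aut(G)$. Your write-up is simply more explicit about the bookkeeping (tracking $\theta = \rho_Y \circ \varphi_* \circ \rho_X^{-1}$ and the lift $\wt\varphi$), whereas the paper compresses this into two sentences.
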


\begin{proof}
If $C_*(\wt X)_\rho \simeq C_*(\wt Y)_{\rho'}$ are chain homotopic as chain complexes of $\Z G$-modules, then $f_\#(C_*(\wt X)_\rho) \simeq f_\#(C_*(\wt Y)_{\rho'})$ are chain homotopic as chain complexes of $\Z H$-modules.
Let $\theta \in \Aut(G)$. Since $f$ is characteristic, \cref{prop:modules-over-quotients} implies that $f_\#((C_*(\wt X)_\rho)_\theta) \cong (f_\#(C_*(\wt X)_\rho))_{\bar{\theta}}$
for some $\bar{\theta} \in \Aut(H)$. The result now follows.
\end{proof}

\section{Algebraic classification of finite $(G,n)$-complexes}
\label{section:Gn-complexes}

A \textit{$(G,n)$-complex} is an $n$-dimensional CW-complex $X$ such that $\pi_1(X) \cong G$ and the universal cover $\wt X$ is $(n-1)$-connected. By contracting a maximal spanning tree, $X$ is homotopy equivalent to a $(G,n)$-complex with a single $0$-cell. For convenience, we will now assume that a $(G,n)$-complex has a single $0$-cell which is the basepoint.

If $i \ge 2$, then $\pi_i(X) \cong \pi_i(\wt X)$ as abelian group. In this way, we can view $\pi_i(X)$ as a $\Z G$-module under the monodromy action.
If $2 \le i < n$, then $\pi_i(X) = 0$ since $\wt X$ is $(n-1)$-connected. If $i = n$, then the Hurewicz theorem implies that:
\[ \pi_n(X) \cong H_n(\wt X ;\Z) \cong H_n(X;\Z G) \]
as $\Z G$-modules. In particular, \cref{cor:pi_2-of-wedge} applies to $\pi_n(X)$.

\subsection{Algebraic $n$-complexes and the D2 problem}
\label{subsection:algebraic-n-complexes}

Let $G$ be a group. 
An \textit{algebraic $n$-complex over $\Z G$} is an exact chain complex:
\[ E = (F_n \xrightarrow[]{\partial_n} \cdots \xrightarrow[]{\partial_2} F_1 \xrightarrow[]{\partial_1} F_0 \xrightarrow[]{\partial_0} \Z \to 0)\]
where the $F_i$ are finitely generated stably free $\Z G$-modules. 

Let $\Alg(G,n)$ denote the equivalence classes of algebraic $n$-complexes over $\Z G$ up to chain homotopy equivalences of the unaugmented complex $(F_i,\partial_i)_{i=1}^n$.
The \textit{$n$th homotopy group} of $E$ is the $\Z G$ module $\pi_n(E) = \Ker(\partial_n)$ and is an invariant of the chain homotopy class of $E$. 
If $n \ge 2$, we can assume the $F_i$ are free since every algebraic $n$-complex is chain homotopy equivalent to such a complex.

 Let $\PHT(G,n)$ denote the polarised homotopy types of finite $(G,n)$-complexes, i.e. the homotopy types of pairs $(X,\rho)$ where $\rho: \pi_1(X) \cong G$.
If $(X,\rho) \in \PHT(G,n)$, then $C_*(\wt X)_{\rho}$ is a chain complex of $\Z G$-modules such that $H_0(C_*(\wt X)_{\rho}) \cong \Z$ and $H_i(C_*(\wt X)_{\rho}) = 0$ for $1 \le i < n$. In particular, there is a map:
\[ \Psi: \PHT(G,n) \to \Alg(G,n). \]

Recall that a finitely presented group $G$ has the \textit{D2 property} if every finite CW-complex $X$ such that $\pi_1(X) \cong G$, $H_i(\wt X;\Z) = 0$ for $i > 2$ and $H^3(X;M)=0$ for all finitely generated $\Z G$-modules $M$ is homotopy equivalent to a finite 2-complex.
The following is a mild improvement of Wall's results on finiteness conditions for CW-complexes due to Johnson \cite{Jo03a} and Mannan \cite{Ma09}. 
This precise version follows from \cite[Corollary 8.27]{Jo12a} in the case $n \ge 3$ and \cite[Theorem 2.1]{Ni19} in the case $n=2$.

\begin{prop} \label{prop:realisation-thm}
Let $G$ be a finitely presented group. 
If $n \ge 3$, then $\Psi$ is bijective.
If $n = 2$, then $\Psi$ is injective and is bijective if and only if $G$ has the {\normalfont D2} property.
\end{prop}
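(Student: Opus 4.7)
The plan is to handle injectivity and surjectivity separately, with surjectivity splitting into the cases $n \ge 3$ and $n = 2$. The entire statement is essentially a chain-theoretic reformulation of Wall's finiteness theorems, so in practice one reduces to locating the correct reference in \cite{Jo03a,Ma09,Jo12a,Ni19}; below I sketch why the chain-level picture is equivalent to the topological one.

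\emph{Injectivity of $\Psi$.} Suppose $(X,\rho)$ and $(Y,\rho')$ are finite $(G,n)$-complexes whose cellular chain complexes $C_*(\wt X)_\rho$ and $C_*(\wt Y)_{\rho'}$ are chain homotopy equivalent as unaugmented complexes of $\Z G$-modules. I would invoke a chain-theoretic Whitehead theorem: any such chain equivalence can be realised (after a single stabilisation if necessary to replace stably free modules by free ones, which does not affect the homotopy type since a free summand can be cancelled by a wedge summand $S^n$ on the topological side and handled carefully for $n=2$) by a cellular map $X \to Y$ compatible with $\rho, \rho'$. Because both spaces are $(G,n)$-complexes, such a map induces isomorphisms on $\pi_1$ by construction and on $\pi_n \cong H_n(\wt{\,\cdot\,};\Z)$ from the chain equivalence, and higher $\pi_i$ vanish. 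Whitehead's theorem then yields a homotopy equivalence.

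\emph{Surjectivity for $n \ge 3$.} Given an algebraic $n$-complex $E = (F_n \to \cdots \to F_0 \to \Z)$ over $\Z G$, I build $X$ by induction. Start from a finite presentation $2$-complex $X_2$ of $G$, which realises some algebraic $2$-complex $E_2$; $E$ and $E_2$ agree in degrees $0,1$ up to stabilisation of the $F_0, F_1$ terms, which can be absorbed by wedging on $S^2$'s or by similar adjustments at the $2$-cell level. Inductively attach free generators of $F_i$ as $i$-cells with attaching maps prescribed by $\partial_i$ (using that $\pi_{i-1}$ of the partial skeleton is exactly $\Ker \partial_{i-2} / \IM \partial_{i-1}$ of the partial complex, by the Hurewicz theorem once $i \ge 3$). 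At the top dimension $n \ge 3$, the top cells may be attached to kill everything above dimension $n$, and since $n \ge 3$ there is no obstruction to realising the boundary $\partial_n$ by attaching maps. This constructs a finite $(G,n)$-complex with chain complex chain equivalent to $E$.

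\emph{The case $n=2$.} Injectivity works as above. For surjectivity, given an algebraic $2$-complex $E$, one first realises $E$ as the chain complex of some finite $3$-dimensional CW-complex $Y$ with $\pi_1(Y) \cong G$ and $H_3(\wt Y;\Z)=0$, by applying the construction above and stopping in dimension $3$. By definition, $G$ has the D2 property exactly when every such $Y$ is homotopy equivalent to a finite $2$-complex, which under $\Psi$ preimages $E$. Conversely, if $\Psi$ is surjective then any D2-type $3$-complex $Y$ has $\Psi^{-1}$ of its truncated algebraic $2$-complex realising the required $2$-complex homotopy equivalent to $Y$. This gives the equivalence. The main obstacle is the case $n=2$ surjectivity, where the chain-level algebra cannot detect whether a stably realisable complex is actually realisable without stabilisation — this is precisely the content of the D2 problem and requires the cited refinements of Wall's theorem.
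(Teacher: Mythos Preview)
The paper does not give a proof of this proposition at all: it is stated as a known refinement of Wall's finiteness results and simply cites \cite[Corollary 8.27]{Jo12a} for $n \ge 3$ and \cite[Theorem 2.1]{Ni19} for $n=2$. You correctly anticipate this (``in practice one reduces to locating the correct reference''), and your sketch goes well beyond what the paper itself provides. For $n \ge 3$ your outline --- realise chain maps by cellular maps via obstruction theory, and build complexes inductively by attaching cells along prescribed boundaries --- is the standard argument underlying the cited results and is essentially correct.

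For $n=2$ your sketch is looser in two places. First, the parenthetical about stabilising to replace stably free by free and then cancelling an $S^2$ is both unnecessary (the chain complexes $C_*(\wt X)_\rho$ and $C_*(\wt Y)_{\rho'}$ are already free) and dangerous: cancelling an $S^2$ after the fact is exactly what one \emph{cannot} do in general --- that is the non-cancellation phenomenon the whole paper is about. Second, in your D2 equivalence the converse direction is incomplete: the D2 property concerns D2-complexes of \emph{arbitrary} dimension, not just dimension three, and one must also check that a D2-complex $Y$ actually yields an algebraic $2$-complex (i.e.\ that the relevant quotient module is stably free, which uses the cohomological hypothesis) and that realising it recovers the homotopy type of $Y$. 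These are precisely the points where the careful arguments of \cite{Ma09} and \cite{Ni19} are needed; your sketch gestures at them but does not supply them.
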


\begin{remark}
The first part is often vacuous since there are finitely presented groups $G$ for which no algebraic $n$-complex over $\Z G$ exists for all $n \ge 3$. The first example was found by Stallings in \cite{St63} (see also \cite[Proposition 2.14]{Bi81}) and was later generalised to a class of right-angled Artin groups by Bestvina-Brady \cite[Main Theorem]{BB97}.
\end{remark}

\subsection{Realising $\Z G$-modules by algebraic $n$-complexes}
\label{subsection:pi_n-realisation}

The $n$th \textit{stable syzygy} $\Omega_{n}^G(\Z)$ is the set of $\Z G$-modules $M$ for which $M \oplus \Z G^i \cong \pi_{n-1}(E) \oplus \Z G^j$ for some $i, j \ge 0$ and some algebraic $(n-1)$-complex $E$ over $\Z G$. We will denote this by $\Omega_n(\Z)$ when the choice of $G$ is clear from the context. This is well-defined and does not depend on the choice of $E$ \cite[Theorem 8.9]{Jo12a}.
It also comes with a map:
\[ \pi_n : \Alg(G,n) \to \Omega_{n+1}(\Z). \]

The following can be found in \cite[Proposition 8.18]{Jo12a}. Note that a group $G$ being finitely presented and of type FL is equivalent to the group $G$ admitting a finite Eilenberg-Maclane space $K(G,1)$.

\begin{prop} \label{prop:realisation-of-syzygies}
Let $n \ge 2$ and let $G$ be an infinite finitely presented group of type $\FL$ such that $H^{n+1}(G;\Z G)=0$. Then $\pi_n$ is bijective.
\end{prop}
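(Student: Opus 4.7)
The plan is to prove bijectivity of $\pi_n$ by verifying surjectivity and injectivity separately. Both directions exploit the hypothesis $H^{n+1}(G;\Z G) = 0$ through the same dimension-shifting mechanism, applied to the truncated free resolution of $\Z$ provided by an algebraic $n$-complex.

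For surjectivity, given $M \in \Omega_{n+1}(\Z)$ the definition supplies an algebraic $n$-complex $E$ together with integers $i, j$ and an isomorphism $M \oplus \Z G^i \cong \pi_n(E) \oplus \Z G^j$. I would first stabilise $E$ by adjoining $j$ free generators in top degree with trivial boundary, producing $E^+$ with $\pi_n(E^+) \cong M \oplus \Z G^i$. It then suffices to strip one free summand off $\pi_n$ at a time, i.e.\ to show that any algebraic $n$-complex $E''$ with $\pi_n(E'') \cong N \oplus \Z G$ can be replaced by one with top homotopy $N$. Take the projection $\pi_n(E'') \twoheadrightarrow \Z G$ onto the free summand and try to extend it along the inclusion $\pi_n(E'') \hookrightarrow F_n''$ to a retraction $F_n'' \twoheadrightarrow \Z G$. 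The obstruction lies in $\Ext^1(\IM(\partial_n''), \Z G)$, and dimension shifting along the length-$n$ exact sequence $0 \to \IM(\partial_n'') \to F_{n-1}'' \to \cdots \to F_0'' \to \Z \to 0$ identifies it with $\Ext^{n+1}(\Z,\Z G) = H^{n+1}(G;\Z G) = 0$. The retraction therefore exists and splits $F_n''$ as $\bar F_n \oplus \Z G$ with the $\Z G$ summand sitting inside $\Ker(\partial_n'')$; deleting it yields the required $\bar E$, and iterating $i$ times realises $M$ as the top homotopy of some algebraic $n$-complex.

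For injectivity, suppose an isomorphism $f:\pi_n(E_1) \xrightarrow{\sim} \pi_n(E_2)$ is given. Using projectivity of each $F_j^{(1)}$ and exactness of the augmented complex $E_2$, lift $\id_\Z$ inductively to a chain map $\phi:E_1 \to E_2$. Any two such lifts differ at the top level by a map $F_n^{(1)} \to \pi_n(E_2)$, so the set of induced maps $\phi_*: \pi_n(E_1) \to \pi_n(E_2)$ as $\phi$ ranges over all lifts forms a coset of the image of the restriction $\Hom(F_n^{(1)}, \pi_n(E_2)) \to \Hom(\pi_n(E_1), \pi_n(E_2))$; its cokernel is $\Ext^1(\IM(\partial_n^{(1)}), \pi_n(E_2))$, which the same dimension shift identifies with $H^{n+1}(G; \pi_n(E_2))$. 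The hypothesis, together with the type $\FL$ assumption, ensures that every $\pi_n(E_2) \in \Omega_{n+1}(\Z)$ is stably free, so this cohomology group vanishes and the coset fills all of $\Hom(\pi_n(E_1), \pi_n(E_2))$; in particular we may choose $\phi$ with $\phi_* = f$. A symmetric construction gives $\psi:E_2 \to E_1$ with $\psi_* = f^{-1}$. The composites $\psi\phi$ and $\phi\psi$ lift $\id_\Z$ and induce identity on $\pi_n$, so the same obstruction analysis applied to endomorphisms shows they are chain homotopic to the respective identities, making $\phi$ a chain equivalence.

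The principal obstacle, in both directions, is the cohomological bookkeeping: each Ext obstruction must be reduced via dimension shifting to $H^{n+1}(G;-)$ with either free or stably free coefficients before $H^{n+1}(G;\Z G)=0$ can be invoked. The delicate point is the injectivity step, where the coefficient module $\pi_n(E_2)$ is only a syzygy rather than free; the argument rests on establishing that under the stated hypotheses every top homotopy $\pi_n(E)$ of an algebraic $n$-complex is stably free, so that $H^{n+1}(G;\pi_n(E)) = H^{n+1}(G;\Z G)^{\oplus r} = 0$. It is here that the type $\FL$ assumption enters essentially, via Schanuel-type comparison of the finite free resolutions it supplies.
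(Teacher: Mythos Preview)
The paper does not supply its own proof here; it simply cites \cite[Proposition~8.18]{Jo12a}. So your attempt goes beyond what the paper does, and must be judged on its own merits.

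Your surjectivity argument is sound. The obstruction to splitting a free rank-one summand off $F_n''$ is exactly $\Ext^1(\IM(\partial_n''),\Z G)$, and the dimension shift along the exact sequence $0\to\IM(\partial_n'')\to F_{n-1}''\to\cdots\to F_0''\to\Z\to 0$ identifies this with $H^{n+1}(G;\Z G)=0$, as you say.

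The injectivity argument, however, has a genuine gap. You assert that type~$\FL$ together with $H^{n+1}(G;\Z G)=0$ forces every $\pi_n(E_2)\in\Omega_{n+1}(\Z)$ to be stably free, so that $H^{n+1}(G;\pi_n(E_2))=0$. This is false. Take $G=\Z^{n+2}$: it is infinite, finitely presented, of type~$\FL$, and being a Poincar\'e duality group of dimension $n+2$ it has $H^i(G;\Z G)=0$ for all $i\ne n+2$, so the hypothesis $H^{n+1}(G;\Z G)=0$ holds. But $\cd(G)=n+2$, and from the Koszul resolution one has a short exact sequence $0\to\Z G\to\Z G^{n+2}\to\pi_n(E)\to 0$ in which the first map sends $1$ to $(\pm(x_i-1))_i$, whose image lies in the augmentation ideal; hence the sequence does not split and $\pi_n(E)$ has projective dimension exactly one, so it is not stably free. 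Moreover the extension $0\to\pi_n(E)\to F_n\to\IM(\partial_n)\to 0$ is then nonsplit as well, giving a nonzero class in $\Ext^1(\IM(\partial_n),\pi_n(E))\cong H^{n+1}(G;\pi_n(E))$. Thus your obstruction group is genuinely nonzero and you cannot arrange $\phi_*=f$ for a prescribed isomorphism~$f$. The Schanuel comparison you invoke yields stable freeness of $\pi_n(E)$ only when $n\ge\cd(G)$, which the stated hypotheses do not imply. For the correct argument you should consult Johnson's cited reference; the point is not to realise a \emph{given} $f$ as $\phi_*$, but to show that \emph{some} lift $\phi$ induces an isomorphism on $\pi_n$, and this uses $H^{n+1}(G;\Z G)=0$ in a more indirect way.
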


The following is a straightforward consequence of Propositions \ref{prop:FP+cd} and \ref{prop:realisation-of-syzygies}.

\begin{prop} \label{prop:syzygies-cd-finite}
Let $G$ be a finitely presented group of type $\FL$ with $\cd(G) = d$.
\begin{clist}{(i)}
\item If $n \ge d$, then $\Omega_{n}(\Z)$ is the set of stably free $\Z G$-modules
\item If $n \ge d$, then $\pi_n: \Alg(G,n) \to \Omega_{n+1}(\Z)$ is bijective
\item If $n = d-1$, then $0 \not \in \IM(\pi_{n} : \Alg(G,n) \to \Omega_{n+1}(\Z))$.
\end{clist}
\end{prop}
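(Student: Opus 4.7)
The plan is to deduce all three parts as direct applications of Propositions \ref{prop:FP+cd} and \ref{prop:realisation-of-syzygies}. The common starting point will be the finitely generated free resolution
\[ 0 \to F_d \to F_{d-1} \to \cdots \to F_0 \to \Z \to 0 \]
of length exactly $d$ supplied by Proposition \ref{prop:FP+cd}.

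For part (i), since $\Omega_n(\Z)$ is well-defined independently of the representing algebraic $(n-1)$-complex, it suffices to exhibit a single $E \in \Alg(G, n-1)$ whose syzygy $\pi_{n-1}(E)$ is stably free. When $n = d$, I would truncate the above resolution at degree $d-1$; exactness of the original resolution at $F_{d-1}$ gives $\pi_{d-1}(E) = \Ker(\partial_{d-1}) \cong F_d$, which is free. When $n > d$, I would pad the resolution with zero modules in degrees $d, d+1, \ldots, n-1$, so that $\pi_{n-1}(E) = 0$. Either way, $\Omega_n(\Z)$ coincides with the set of stably free $\Z G$-modules.

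Part (ii) should follow immediately by invoking Proposition \ref{prop:realisation-of-syzygies}, once I verify its hypotheses: the condition $n \ge d = \cd(G)$ gives $n+1 > \cd(G)$ and hence $H^{n+1}(G;\Z G) = 0$; the group $G$ is finitely presented and of type $\FL$ by assumption; and $G$ is automatically infinite whenever $d \ge 1$, which covers the range of interest. For part (iii), I would argue by contradiction: if some $E \in \Alg(G, d-1)$ had $\pi_{d-1}(E) = 0$, then $\partial_{d-1}$ would be injective and $E$ would constitute an exact resolution of $\Z$ by finitely generated stably free (hence projective) $\Z G$-modules of length $d-1$, forcing $\cd(G) \le d-1$ and contradicting $\cd(G) = d$.

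I do not anticipate any real obstacle: the proposition is explicitly flagged as a straightforward consequence of the two cited results, and the only piece of bookkeeping is tracking $\pi_{n-1}$ through the padded resolution in (i).
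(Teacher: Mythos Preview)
Your proposal is correct and matches the paper's intended argument exactly: the paper simply states that the proposition is a straightforward consequence of Propositions \ref{prop:FP+cd} and \ref{prop:realisation-of-syzygies}, and you have supplied precisely those details. One small slip: in part (i), for $n>d$ you should pad the full resolution with zero modules in degrees $d+1,\ldots,n-1$ (keeping $F_d$ in degree $d$), not from degree $d$ onward --- otherwise exactness at $F_{d-1}$ fails --- but the intended construction and conclusion are clearly right.
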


\begin{remark}
This implies that, for $n \ge 2$, $\pi_n : \Alg(G,n) \to \Omega_{n+1}(\Z)$ is not surjective whenever $\cd(G) = n+1$ (for example, $G = \Z^{n+1}$). This was noted in \cite[p107]{Jo12a}.
\end{remark}

It is possible to see that $\cd(G) \le n$ implies that $\pi_n: \Alg(G,n) \to \Omega_{n+1}(\Z)$ is surjective directly (see, for example, \cite[Theorem 4]{HJ06b}).
The following is now clear.

\begin{corollary} \label{cor:homotopy-cd-finite}
Let $n \ge 3$ and let $G$ be a finitely presented group of type $\FL$ with $\cd(G) = n$. Then $\pi_n$ gives a one-to-one correspondence between homotopy types of finite $(G,n)$-complexes and $\Aut(G)$-isomorphism classes of stably free $\Z G$-modules.
\end{corollary}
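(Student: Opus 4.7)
The plan is to obtain the desired bijection as the composition of two bijections already established in \cref{section:Gn-complexes}, then to descend along a natural $\Aut(G)$-action on both sides.

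First I would invoke \cref{prop:realisation-thm}. Since $n \ge 3$, it supplies a bijection $\Psi : \PHT(G,n) \to \Alg(G,n)$ between polarised homotopy types of finite $(G,n)$-complexes and algebraic $n$-complexes over $\Z G$. Next, because $G$ is of type $\FL$ with $\cd(G) = n$, the hypothesis $n \ge d$ of \cref{prop:syzygies-cd-finite} is satisfied with $d=n$, so parts (i) and (ii) of that proposition together give a bijection $\pi_n : \Alg(G,n) \to \Omega_{n+1}(\Z)$ and identify $\Omega_{n+1}(\Z)$ with the set of stably free $\Z G$-modules. Composing, the assignment $(X,\rho) \mapsto \pi_n(X;\Z G) = H_n(\wt X;\Z)_\rho$ is a bijection $\Phi : \PHT(G,n) \to \{\text{stably free } \Z G\text{-modules}\}$.

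The remaining task is to pass from polarised to unpolarised homotopy types. Every finite $(G,n)$-complex admits a polarisation, so the forgetful map $\PHT(G,n) \twoheadrightarrow \{\text{homotopy types of finite $(G,n)$-complexes}\}$ is surjective, and its fibres are precisely the orbits of the $\Aut(G)$-action $\theta \cdot (X,\rho) = (X, \theta \circ \rho)$. On the target side, $\Aut(G)$ acts on stably free $\Z G$-modules by the twist $M \mapsto M_\theta$ from \cref{subsection:Aut(G)}, with orbits equal to the $\Aut(G)$-isomorphism classes. The heart of the proof is thus to verify that $\Phi$ is $\Aut(G)$-equivariant; a direct unwinding of the conventions in \cref{section:module-invariants} shows $H_n(\wt X;\Z)_{\theta \circ \rho} \cong \bigl(H_n(\wt X;\Z)_\rho\bigr)_{\theta^{-1}}$, so changing polarisation by $\theta$ twists the module by $\theta^{-1}$ (the sign being immaterial for orbits).

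The only genuine obstacle is this equivariance check, which is bookkeeping but needs care because $\Psi$ is defined only up to chain homotopy equivalence of the unaugmented complex; however, the twist $(-)_\theta$ is a functor on chain complexes of $\Z G$-modules, so chain-homotopic complexes stay chain-homotopic after twisting and the action is well defined on $\Alg(G,n)$. Once equivariance is in hand, passing to quotients on both sides of $\Phi$ delivers the claimed bijection between homotopy types of finite $(G,n)$-complexes and $\Aut(G)$-isomorphism classes of stably free $\Z G$-modules, realised by $X \mapsto \pi_n(X;\Z G)$.
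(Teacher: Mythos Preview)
Your proposal is correct and follows exactly the approach the paper intends: the corollary is stated with ``The following is now clear,'' meaning it is an immediate consequence of \cref{prop:realisation-thm} and \cref{prop:syzygies-cd-finite}, combined with the $\Aut(G)$-equivariance observation from \cref{section:module-invariants}. You have simply spelled out in more detail what the paper leaves implicit, including the passage from polarised to unpolarised homotopy types via the $\Aut(G)$-action; this is precisely the content the paper records at the start of \cref{section:module-invariants}.
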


Finally, we note the following where $\rank(P)$ denotes the stably free rank of $P$.

\begin{prop} \label{prop:rank-computation}
Let $G$ be a finitely presented group of type $\FL$ with $\cd(G) = d$ and let $n \ge d-1$. Then $\chi(X) = k + \chi_{\min}(G,n)$ if and only if:
\[ \rank(\pi_n(X)) = k + \min\{ \rank(\pi_n(X_0)) : \text{$X_0$ a finite $(G,n)$-complex} \}. \]
In particular, if $n \ge \max\{3, d\}$, then $k = \rank(\pi_n(X))$. 
\end{prop}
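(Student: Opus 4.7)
The plan is to reduce the statement to the single identity
\[ \chi(X) = e(G) + (-1)^n \rank(\pi_n(X)), \]
where $e(G) = \sum_{i=0}^d (-1)^i \rank_{\Z G}(F_i)$ is computed from a finite free resolution $0 \to F_d \to \cdots \to F_0 \to \Z \to 0$ provided by \cref{prop:FP+cd}. This $e(G)$ is well-defined because $\Z G \subseteq \Q G$ is stably finite by Kaplansky's theorem, so any two such resolutions have matching alternating sums of ranks. The hypothesis $n \ge d-1$ ensures, via \cref{prop:syzygies-cd-finite}(i), that $\pi_n(X) \in \Omega_{n+1}(\Z)$ is stably free, so $\rank(\pi_n(X))$ is also well-defined.

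To prove the identity, I would start from the exact sequence of $\Z G$-modules
\[ 0 \to \pi_n(X) \to C_n(\wt X) \to \cdots \to C_0(\wt X) \to \Z \to 0 \]
arising from the fact that $X$ is a $(G,n)$-complex, and note that $\rank C_i(\wt X) = c_i(X)$. Splicing this exact sequence with the finite free resolution of $\Z$ produces a bounded acyclic complex of finitely generated stably free $\Z G$-modules, for which the alternating sum of ranks must vanish. A routine rearrangement then yields the displayed identity.

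The first equivalence is now immediate: if $\rank_{\min}$ denotes the minimum of $\rank(\pi_n(X_0))$ over finite $(G,n)$-complexes $X_0$, then the identity applied to a minimising $X_0$ gives $\chi_{\min}(G,n) = e(G) + (-1)^n \rank_{\min}$, so $\chi(X) - \chi_{\min}(G,n) = (-1)^n(\rank(\pi_n(X)) - \rank_{\min})$, and both sides equal $k$ simultaneously. For the ``in particular'' clause when $n \ge \max\{3, d\}$, it suffices to show $\rank_{\min} = 0$. Padding the length-$d$ free resolution of $\Z$ with zero modules in degrees $d+1, \ldots, n$ gives an algebraic $n$-complex with trivial top homotopy module; since $n \ge 3$, \cref{prop:realisation-thm} makes $\Psi$ bijective, so this algebraic complex is realised by a finite $(G,n)$-complex $X_0$ with $\pi_n(X_0) = 0$.

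The main subtlety is the rank-sum step: the module $\Z$ appearing in the cellular sequence is not stably free over $\Z G$, so it cannot be assigned a rank inside the original exact sequence. Splicing with the finite free resolution of $\Z$ removes $\Z$ and yields a bounded acyclic complex of stably free modules, at which point stable finiteness of $\Z G$ forces the desired rank identity.
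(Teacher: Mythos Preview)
The paper states this proposition without proof, so there is nothing to compare against directly. Your argument is correct and is the natural one: the identity $\chi(X)=e(G)+(-1)^n\rank(\pi_n(X))$ follows from the generalised Schanuel lemma applied to the two partial resolutions of $\Z$ (the cellular one with kernel $\pi_n(X)$, stably free by \cref{prop:syzygies-cd-finite}(i) since $n+1\ge d$, and the finite free one from \cref{prop:FP+cd}), and stable finiteness of $\Z G$ makes the rank arithmetic legitimate. Your treatment of the ``in particular'' clause via \cref{prop:realisation-thm} is also correct.

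One comment: your derivation produces $\chi(X)-\chi_{\min}(G,n)=(-1)^n\bigl(\rank\pi_n(X)-\rank_{\min}\bigr)$, and you then say ``both sides equal $k$ simultaneously''. This is literally true only for $n$ even; for $n$ odd the usual Euler characteristic decreases under $-\vee S^n$, so $\chi_{\min}(G,n)$ in the naive sense does not exist. This is a looseness in the paper's own statement (it tacitly uses $\chi_{\min}$ to mean the Euler characteristic at the root of the tree, equivalently normalises so that $\chi(X\vee S^n)=\chi(X)+1$), not a defect in your argument. Your identity is the correct one in all parities and is exactly what the paper needs when it invokes this proposition in \cref{section:proof-main-topological}.
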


\section{Proof of \cref{thm:main}}
\label{section:proof-main-topological}

We will now prove \cref{thm:main} separately in the two cases of non-minimal Euler characteristic ($k \ge 1$) and minimal Euler characteristic ($k =0$). 
Throughout, $T_i \cong T$ will denote the trefoil group and, for a group $G$, the group $G_{(n)}$ will be as defined in \cref{construction}.

\subsection{Finite $(G,n)$-complexes with non-minimal Euler characteristic}
\label{subsection:non-min-EC}

The aim of this section will be to prove the following. Note that, in the case $n \ge 3$, we could also take $G$ to be one of the other groups listed at the end of \cref{section:proof-main-algebra}.

\begin{thm} \label{thm:main-non-min-EC}
Let $n \ge 2$, let $k \ge 1$ and let $G = (T_1 \ast \cdots \ast T_k)_{(n-1)}$ as in \cref{construction}. Then, for all $1 \le m \le k$, there exists infinitely many finite $(G,n)$-complexes $\wh X_i$ such that:
\begin{clist}{(i)}
\item $\pi_n(\wh X_i) \cong \wh S_i$ as $\Z G$-modules (where $\wh S_i$ is as defined in \cref{thm:main-SF-detailed})
\item $\chi(\wh X_i) = m + \chi_{\min}(G,n)$
\item $\wh X_i \not \simeq Y_i \vee S^2$ for any finite $(G,n)$-complex $Y_i$.
\end{clist}
\end{thm}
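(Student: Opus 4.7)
By \cref{prop:group-construction}, $G = (T_1 \ast \cdots \ast T_k)_{(n-1)}$ is of type $\FL$ with $\cd(G) = n$, so \cref{prop:syzygies-cd-finite} gives that $\Omega_{n+1}(\Z)$ is precisely the set of stably free $\Z G$-modules and that $\pi_n : \Alg(G,n) \to \Omega_{n+1}(\Z)$ is a bijection. The overall strategy is to take the $\Aut(G)$-distinct stably free $\Z G$-modules $\wh S_i$ of rank $m$ supplied by \cref{thm:main-SF-detailed}, lift them through this bijection to algebraic $n$-complexes $E_i$ with $\pi_n(E_i) \cong \wh S_i$, and then realise the $E_i$ geometrically as finite $(G,n)$-complexes $\wh X_i$.

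The geometric realisation step is automatic for $n \ge 3$: \cref{prop:realisation-thm} asserts that $\Psi : \PHT(G,n) \to \Alg(G,n)$ is bijective, so I set $\wh X_i$ to be a representative of $\Psi^{-1}(E_i)$. For $n = 2$, $G = T_1 \ast \cdots \ast T_k$, and I would sidestep the {\normalfont D2} question entirely by constructing $\wh X_i$ directly: take the wedge of the Berridge--Dunwoody presentation $2$-complexes realising $S_i$ (as described in \cref{remark:relation-module}) for $T_1, \ldots, T_m$ together with the standard (aspherical) trefoil presentation $2$-complexes of $T_{m+1}, \ldots, T_k$. Then \cref{cor:pi_2-of-wedge} identifies $\pi_2(\wh X_i)$ with $\bigoplus_{j=1}^m {\iota_j}_\#(S_i) = \wh S_i$, the contributions of the trailing trefoil factors being zero.

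Conclusion (i) is now built in. For (ii), I would apply \cref{prop:rank-computation}: since $G$ is of type $\FL$ with $\cd(G) = n$, \cref{prop:FP+cd} produces a length-$n$ finite free resolution of $\Z$, hence an element of $\Alg(G,n)$ with $\pi_n = 0$, which by the realisation step yields a finite $(G,n)$-complex $X_0$ with $\pi_n(X_0) = 0$; so the minimum in \cref{prop:rank-computation} is $0$ and $\chi(\wh X_i) - \chi_{\min}(G,n) = \rank(\wh S_i) = m$. For (iii), a homotopy equivalence $\wh X_i \simeq Y_i \vee S^n$ would force $\pi_n(\wh X_i) \cong \pi_n(Y_i) \oplus \Z G$ via \cref{cor:pi_2-of-wedge}, contradicting the fact that $\wh S_i$ has no $\Z G$ summand (\cref{thm:main-SF-detailed}(ii)). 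Distinctness of the $\wh X_i$ up to homotopy follows because the $\Aut(G)$-isomorphism class of $\pi_n(\wh X_i) \cong \wh S_i$ is a homotopy invariant (see \cref{section:module-invariants}), combined with \cref{thm:main-SF-detailed}(iii).

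The main obstacle is the $n = 2$ case, where \cref{prop:realisation-thm} only yields injectivity of $\Psi$. The presentation-complex construction above circumvents the need to verify the {\normalfont D2} property for $T_1 \ast \cdots \ast T_k$, but its viability relies essentially on the explicit relation-module interpretation of the Berridge--Dunwoody modules in \cref{remark:relation-module} and on the bookkeeping of Euler characteristics needed to confirm that the wedge lies exactly $m$ above $\chi_{\min}(G,2)$ rather than at some higher level; this is the place where the concrete geometric input of \cref{section:proof-main-algebra} is indispensable.
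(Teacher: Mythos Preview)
Your plan matches the paper's proof almost exactly: for $n \ge 3$ you invoke \cref{prop:syzygies-cd-finite} and \cref{prop:realisation-thm} to realise each $\wh S_i$ by a finite $(G,n)$-complex, and for $n=2$ you build $\wh X_i$ by hand as a wedge; parts (ii) and (iii) then follow from \cref{prop:rank-computation} and \cref{thm:main-SF-detailed}(ii) just as you say. Your device of wedging $m$ non-aspherical complexes with $k-m$ copies of the standard aspherical trefoil presentation to handle $1 \le m < k$ is exactly right and in fact slightly more explicit than the paper's write-up.

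There is one genuine slip in the $n=2$ step. \Cref{remark:relation-module} identifies $S_i$ as the \emph{relation module} for the generating set $\{x^{2\ell_i+1}, y^{3\ell_i+1}\}$ of $T$, i.e.\ as $\Ker(\partial_1)$ in a partial resolution $\Z T^2 \to \Z T \to \Z \to 0$; it does \emph{not} exhibit $S_i$ as $\pi_2$ of any presentation $2$-complex, and there is no ``Berridge--Dunwoody presentation $2$-complex'' to wedge. Passing from a stably free relation module of $T$ to an honest finite $2$-complex with that module as $\pi_2$ is precisely the content of Harlander--Jensen's \cref{thm:HJ}, which produces explicit four-generator four-relator presentations $\mathcal{P}_{\ell_i}$ with $\pi_2(X_{\mathcal{P}_{\ell_i}}) \cong S_i$. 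Replace your appeal to \cref{remark:relation-module} with \cref{thm:HJ} and the argument goes through unchanged; without it, you are implicitly assuming the {\normalfont D2} property for $T$ (or for $\ast_{i=1}^k T$), which is exactly what you said you wanted to sidestep.
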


Since the $\Aut(G)$-isomorphism class of $\pi_n(\wh X_i)$ is a homotopy invariant, it follows that the $\wh X_i$ are homotopically distinct by \cref{thm:main-SF-detailed}. By restricting to the case $m=k$, this implies \cref{thm:main} for $k \ge 1$. 

We will begin with the case $n=2$, where $G = T_1 \ast \cdots \ast T_k$. Let $S_i$ be the stably free $\Z T$-modules from \cref{thm:BD} and, for $1 \le m \le k$, recall that:
\[ S_{i_1, \cdots, i_m} = {\iota_1}_\#(S_{i_1}) \oplus \cdots \oplus {\iota_m}_\#(S_{i_m}).\]
The case of interest will be $\wh S_i = S_{i_1, \cdots, i_m}$ where $i_j = i$ for all $j$.

The main result which we will use is the following, which is \cite[Theorem 4.5]{HJ06a}.

\begin{thm}[Harlander-Jensen] \label{thm:HJ}
The trefoil group $T$ has presentations
\[ \mathcal{P}_i = \langle x,y,a,b \mid x^2=y^3, a^2=b^3, x^{2i+1}=a^{2i+1}, y^{3i+1}=b^{3i+1} \rangle \]
for $i \ge 0$. For each $i$, there exists $\ell_i$ such that $S_i \cong \pi_2(X_{\mathcal{P}_{\ell_i}})$ as $\Z T$-modules.
\end{thm}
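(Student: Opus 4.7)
The plan is to split the statement into two parts: first verify that $\mathcal{P}_i$ presents the trefoil group $T$ for all $i \ge 0$; second, for each $j \ge 0$, identify $\pi_2(X_{\mathcal{P}_j})$ with the relation module $M_j$ from Remark \ref{remark:relation-module}. The final assertion then follows by taking $j = \ell_i$ as in Theorem \ref{thm:BD}.

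For the first part, the natural map $\phi : G(\mathcal{P}_i) \to T$ sending $x, a \mapsto x$ and $y, b \mapsto y$ is clearly a surjection, so the issue is injectivity. View $G(\mathcal{P}_i)$ as the amalgamated free product $T_1 \ast_H T_2$ of two copies of $T$ along the subgroup $H$ generated by the amalgamating elements. Since $T$ has centre $\langle z \rangle$ with $z = x^2 = y^3$, the relations $x^{2i+1} = a^{2i+1}$ and $y^{3i+1} = b^{3i+1}$ can be rewritten in the form $xz_1^i = az_2^i$ and $yz_1^i = bz_2^i$, where $z_1, z_2$ are the central generators of $T_1, T_2$ respectively. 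A Tietze/normal-form argument, exploiting the centrality of $z_1$ and $z_2$ within each factor together with the known structure of centralisers in the torus knot group $T$, should then force $a = x$ and $b = y$ in the amalgam, collapsing $G(\mathcal{P}_i)$ onto the copy of $T$ given by $\phi$.

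For the second part, compute $\chi(X_{\mathcal{P}_j}) = 1 - 4 + 4 = 1 = \chi_{\min}(T,2) + 1$; by Proposition \ref{prop:rank-computation}, $\pi_2(X_{\mathcal{P}_j})$ is a stably free $\Z T$-module of rank one. Its isomorphism type is $\ker(\partial_2)$, where $\partial_2 : \Z T^4 \to \Z T^4$ is the Fox Jacobian of $\mathcal{P}_j$ (with $\pi_1$ identified with $T$ via part one). Perform chain-level elementary operations, mirroring the Tietze simplifications of part one, to reduce $\partial_2$ to a matrix whose kernel is visibly the kernel of $\bigl(\begin{smallmatrix} x^{2j+1} - 1 \\ y^{3j+1} - 1 \end{smallmatrix}\bigr) : \Z T^2 \to \Z T$, namely $M_j$. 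Taking $j = \ell_i$ yields $\pi_2(X_{\mathcal{P}_{\ell_i}}) \cong M_{\ell_i} = S_i$.

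The main obstacle is the collapse argument in part one. The subgroup $H$ is neither cyclic nor central, and $\mathcal{P}_i$ admits no immediate Tietze simplification; one must combine the centrality of $z$ within each factor with the fact that $T/Z(T) \cong \Z/2 \ast \Z/3$ and structural properties of centralisers in $T$ to bootstrap the equalities $a = x$ and $b = y$ across the amalgam. Once part one is secured, the Fox-calculus computation in part two is essentially routine bookkeeping, but must be carried out carefully enough to recover the precise module $M_j$ rather than merely its stable class, so that the conclusion $\pi_2(X_{\mathcal{P}_{\ell_i}}) \cong S_i$ holds on the nose.
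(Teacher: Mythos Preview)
The paper does not prove this theorem: it is quoted verbatim as \cite[Theorem 4.5]{HJ06a} and no argument is supplied. There is therefore no in-paper proof to compare your proposal against.

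That said, your plan is the right one, and in fact Part~1 is easier than you fear. Once you have rewritten the amalgamating relations as $x z_1^i = a z_2^i$ and $y z_1^i = b z_2^i$ with $z_1 = x^2 = y^3$ and $z_2 = a^2 = b^3$ central in their respective factors, square the first relation and cube the second (using centrality) to obtain
\[
z_1^{2i+1} = z_2^{2i+1}, \qquad z_1^{3i+1} = z_2^{3i+1}.
\]
Since $3(2i+1) - 2(3i+1) = 1$, these two equalities alone force $z_1 = z_2$ in $G(\mathcal{P}_i)$, and hence $a = x$ and $b = y$. No structural analysis of centralisers or of $T/Z(T)$ is needed; this is a pure Tietze argument. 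Combined with your surjection $\phi$, this gives $G(\mathcal{P}_i) \cong T$ immediately.

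For Part~2, your outline (Fox Jacobian followed by elementary row/column operations tracking the Tietze moves of Part~1) is exactly how Harlander--Jensen proceed, and it does yield $\pi_2(X_{\mathcal{P}_j}) \cong M_j$ as $\Z T$-modules, not merely stably. Your concern about recovering $M_j$ on the nose is legitimate, but the reduction is clean enough that no ambiguity arises; the key is that after identifying $a = x$, $b = y$ the last two relations become $x^{2j+1} = x^{2j+1}$ and $y^{3j+1} = y^{3j+1}$, whose Fox derivatives produce precisely the column $\left(\begin{smallmatrix} x^{2j+1}-1 \\ y^{3j+1}-1 \end{smallmatrix}\right)$ after cancellation of the redundant generators.
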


\begin{remark}
Note that $\mathcal{P}_0 \simeq \langle x, y \mid x^2=y^3, 1 \rangle$ and $\mathcal{P}_1$ is homotopy equivalent to the presentation found by Dunwoody in \cite{Du76}.	
\end{remark}

Let $X_i = X_{\mathcal{P}_{\ell_i}}$ for each $i \ge 1$. For integers $i_j \ge 1$, define:
\[ X_{i_1, \cdots, i_n} = X_{i_1} \vee \cdots \vee X_{i_n}\]
which is a finite $2$-complex with $\pi_1(X_{i_1, \cdots, i_n}) \cong T_1 \ast \cdots \ast T_k$. Let $\wh X_i = X_{i_1, \cdots, i_n}$ where $i_j = i$ for all $j$.
By repeated application of \cref{cor:pi_2-of-wedge}, we have that $\pi_2(X_{i_1,\cdots,i_n}) \cong S_{i_1, \cdots, i_n}$ and so $\pi_2(\wh X_i) \cong \wh S_i$. 
Since $\rank(\wh S_i) = m$, we have that $\chi(\wh X_i) = m + \chi_{\min}(G)$ by \cref{prop:rank-computation}. Finally, if $\wh X_i \simeq Y_i \vee S^2$, then:
\[ \wh S_i \cong \pi_2(\wh X_i) \cong \pi_2(Y_i) \oplus (\Z G \otimes_{\Z} \pi_2(S^2)) \cong \pi_2(Y_i) \oplus \Z G \]
which is a contradiction since $\wh S_i$ has no summand of the form $\Z G$ by \cref{thm:main-SF-detailed}.	
This completes the proof of \cref{thm:main-non-min-EC} in the case $n=2$.

We will now consider the case $n \ge 3$. where $G = (T_1 \ast \cdots \ast T_k)_{(n-1)}$. By \cref{thm:main-SF-detailed}, there exists stably free $\Z G$-modules $\wh S_i$ of rank $m$ and which have no summand of the form $\Z G$. By \cref{prop:group-construction}, we have that $\cd(G) = n$ and so, by \cref{cor:homotopy-cd-finite}, there exists finite $(G,n)$-complexes $\wh X_i$ such that $\pi_n(\wh X_i) \cong \wh S_i$. We can now argue similarly to the case $n=2$. This completes the proof of \cref{thm:main-non-min-EC}.

\sss{Application to Syzygies}

We now discuss consequences of \cref{thm:main-non-min-EC} for syzygies.
Recall that a $\Z G$-module $M_0 \in \Omega_n(\Z)$ is \textit{minimal} if $M \in \Omega_n(\Z)$ implies that $M \oplus \Z G^i \cong M_0 \oplus \Z G^j$ for some $i \le j$. For $k \ge 0$, we say that $M \in \Omega_n(\Z)$ has \textit{level $k$} if $M \oplus \Z G^i \cong M_0 \oplus \Z G^j$ where $j-i = k$ and $M_0$ is minimal.
If $X$ is a finite $(G,n)$-complex, then $\pi_n(X) \in \Omega_{n+1}(\Z)$. If $\cd(G) = n$ and $\pi_n(X)$ is stably free $\Z G$-module of rank $k$, then $\pi_n(X)$ has level $k$. 
Hence, by \cref{thm:main-non-min-EC}, we have: 

\begin{corollary} \label{cor:syzygies}
For all $n \ge 3$ and $k \ge 1$, there exists a group $G$ and infinitely many $\Z G$-modules $M_i \in \Omega_n(\Z)$ at level $k$ which are distinct up to $\Aut(G)$-isomorphism.
\end{corollary}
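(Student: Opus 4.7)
The plan is to derive \cref{cor:syzygies} almost directly from \cref{thm:main-SF-detailed}, by choosing a group $G$ whose cohomological dimension places stably free $\Z G$-modules of rank $k$ inside $\Omega_n(\Z)$ at level exactly $k$. Concretely, for $n \ge 3$ and $k \ge 1$, I would take $G = (T_1 \ast \cdots \ast T_k)_{(n-2)}$. The trefoil group $T$ has type $\FL$ with $\cd(T) = 2$, so by iterating \cref{lemma:cd-amalg} the free product $T_1 \ast \cdots \ast T_k$ is of type $\FL$ with $\cd = 2$; then \cref{prop:group-construction} gives that $G$ is finitely presented, of type $\FL$, with $\cd(G) = (n-2) + 2 - 1 = n-1$. (For $n = 3$ this reduces to $G = T_1 \ast \cdots \ast T_k$ itself.)

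Next, I would apply \cref{thm:main-SF-detailed} with iteration index $n-2$ and rank parameter $m = k$ to produce infinitely many stably free $\Z G$-modules $M_i$ of rank $k$ that are pairwise distinct up to $\Aut(G)$-isomorphism. To interpret these as level-$k$ elements of $\Omega_n(\Z)$, I would observe that $n \ge \cd(G) = n-1$, so \cref{prop:syzygies-cd-finite}(i) identifies $\Omega_n(\Z)$ with the set of stably free $\Z G$-modules. In particular $0 \in \Omega_n(\Z)$, and it is a minimal element: for any stably free $M$ of rank $r$ one has $M \oplus \Z G^a \cong 0 \oplus \Z G^{a+r}$, so $M$ lies at level $r$. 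Hence each $M_i$ lies at level $k$; since $k \ge 1$ these are genuinely nontrivial, and there are infinitely many of them up to $\Aut(G)$-isomorphism, giving the desired branching of $\Omega_n(\Z)$ at level $k$.

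Essentially no new ideas are required beyond \cref{thm:main-SF-detailed} and the syzygy description of \cref{prop:syzygies-cd-finite}, so the main thing to watch is indexing. The iteration index $n-2$ is chosen precisely so that $\cd(G) = n-1$, one less than the syzygy index $n$; this is exactly the condition that places us in the stably-free regime of \cref{prop:syzygies-cd-finite}(i) and lets us identify the rank of $M_i$ with its level. This is the step where a miscount would be fatal; past it, the argument is purely formal.
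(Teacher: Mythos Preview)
Your proposal is correct and follows essentially the same approach as the paper: both identify $\Omega_n(\Z)$ with the set of stably free $\Z G$-modules for a group $G$ of type $\FL$ with $\cd(G)=n-1$, then invoke the infinitely many pairwise $\Aut(G)$-distinct rank-$k$ stably frees from \cref{thm:main-SF-detailed}. The only cosmetic difference is that the paper routes the conclusion through \cref{thm:main-non-min-EC} (taking $\pi_n$ of the $(G,n)$-complexes there, so that the modules land in $\Omega_{n+1}(\Z)$, and then reindexing), whereas you apply \cref{thm:main-SF-detailed} and \cref{prop:syzygies-cd-finite}(i) directly; your route is slightly cleaner since the corollary is purely algebraic, and your indexing $G=(T_1\ast\cdots\ast T_k)_{(n-2)}$ matches the paper's after this reindexing.
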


\sss{$\F$-homotopy}

For a field $\F$, an \textit{$\F$-homotopy equivalence} is a map $f: X \to Y$ such that $\pi_1(f)$ is a group isomorphism and $\pi_i(f) \otimes \F : H_i(\wt X) \otimes \F \to H_i(\wt Y) \otimes \F$ is bijective for $i \ge 2$. If $\F = \F_p$ or $\Q$ then, similarly to \cref{thm:COF-SF}, we can show:

\begin{thm} \label{thm:COF-main}
If $n \ge 2$, $k \ge 1$ and $G = (T_1 \ast \cdots \ast T_k)_{(n-1)}$, then there exists $\F$-homotopically distinct finite $(G,n)$-complexes $X_1$, $X_2$ with $\chi(X_i) = k + \chi_{\min}(G,n)$.
\end{thm}

\subsection{Finite $(G,n)$-complexes with minimal Euler characteristic}
\label{subsection:min-EC}

The following is the main result of \cite{Lu93}. Note that $T_{(n)}$ denotes the result of applying \cref{construction} to the trefoil group $T$.

\begin{thm}[Lustig] \label{thm:lustig}
Let $G = T_{(2)}$. Then there exists infinitely many homotopically distinct finite $2$-complexes $X_i$ for $i \ge 1$ such that $\pi_1(X_i) \cong G$ and $\chi(X_i) = 1$.
\end{thm}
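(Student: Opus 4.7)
The plan is to exploit the characteristic retraction $f \colon G \twoheadrightarrow T$ supplied by \cref{prop:group-construction} together with the Berridge--Dunwoody stably free modules $S_i$ from \cref{thm:BD}. First, I would pin down $\chi_{\min}(G,2) = 1$ by exhibiting the explicit $3$-generator, $3$-relator presentation
\[ \mathcal{Q} = \langle x, y, r \mid x^2 y^{-3},\; [x,r^m],\; [y,r^m] \rangle \]
of $G = T_{(2)}$ (which has $\chi(X_\mathcal{Q}) = 1$), and ruling out $\chi = 0$ by a rational homology computation: one has $b_1(G) = 2$ (since $T^{\mathrm{ab}} \cong \Z$ and $r$ contributes another $\Z$) and $b_2(G) = 0$ (from the fact that $T$ is a retract of $G$ combined with a Mayer--Vietoris calculation for the amalgam $(T \times \Z) \ast_\Z \Z$), so the deficiency inequality forces $\chi(X) \ge 1$ for any finite 2-complex with $\pi_1(X) \cong G$.

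The heart of the argument would be to produce an infinite family of presentations $\mathcal{P}_i$ of $G$, all with $3$ generators and $3$ relators, obtained from $\mathcal{Q}$ by inserting $r$-twists into the first relator. Concretely, I would try relators of the form $x^{2} y^{-3} \cdot [w_i(r^m), v_i(x,y)]$ for carefully chosen words $w_i, v_i$, so that the relator still normally generates $x^2 y^{-3}$ modulo the other two relators (hence $\mathcal{P}_i$ still presents $G$) while the Fox Jacobian is genuinely different. Each such $X_i := X_{\mathcal{P}_i}$ automatically satisfies $\chi(X_i) = 1$, and $\pi_2(X_i)$ is computed as the kernel of the boundary map specified by the Fox Jacobian of $\mathcal{P}_i$.

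To distinguish the $X_i$ I would apply $f_\#$ to the cellular chain complex of $\wt{X}_i$, obtaining a chain complex of free $\Z T$-modules whose Jacobian is the image of the Fox Jacobian under the augmentation $r \mapsto 1$. By \cref{prop:ZH-homology}, the $\Aut(T)$-isomorphism class of $H_2(X_i; \Z T) = f_\#(\pi_2(X_i))$ is a homotopy invariant. The aim is to arrange the twists $w_i, v_i$ so that, after killing $r$, the resulting $\Z T$-presentations coincide (up to Tietze transformations and stabilisation) with the Harlander--Jensen presentations $\mathcal{P}_i^T$ from \cref{thm:HJ}, so that $f_\#(\pi_2(X_i)) \cong S_{\ell_i} \oplus \Z T^{a_i}$ for some $a_i$. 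The Berridge--Dunwoody invariant (the isomorphism type of $\F_{p_i} \otimes (f')_\# S_{\ell_i}$ under the further quotient to $T/T''$) then separates the $X_i$ up to homotopy.

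The main obstacle is step two: producing explicit twist words whose Fox Jacobians simultaneously (i)~still normally generate the kernel of $F(x,y,r) \twoheadrightarrow G$, so that $\mathcal{P}_i$ really presents $G$, and (ii)~descend under $f$ to Fox Jacobians realising the Harlander--Jensen stably free modules rather than trivial ones. Since $G$ is not a free product, Bergman's theorem is unavailable, so the analysis cannot be reduced to Corollary \ref{cor:Bergman} and must be carried out directly in $\Z G$ using the amalgamated structure $(T \times \langle q \rangle) \ast_{\langle q = r^m \rangle} \langle r \rangle$ to control the commutation of $r^m$ with $T$. A secondary technical issue is ensuring that the stabilised rank $a_i$ of $f_\#(\pi_2(X_i))$ stays independent of $i$, so that detecting the $S_{\ell_i}$-summand genuinely distinguishes the homotopy types.
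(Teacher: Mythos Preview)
Your overall strategy---use the characteristic retraction $f\colon T_{(2)}\twoheadrightarrow T$ from \cref{prop:group-construction}, compute the invariant $H_2(X_i;\Z T)$ via \cref{prop:ZH-homology}, and distinguish the resulting $\Z T$-modules using the Berridge--Dunwoody stably free modules---is exactly the strategy of the paper (and of Lustig). The gap is in \emph{where} you place the $i$-dependence.

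You propose to twist the \emph{first} relator, replacing $x^2y^{-3}$ by $x^2y^{-3}\cdot[w_i(r^m),v_i(x,y)]$. But under $r\mapsto 1$ the Fox derivatives of the commutator factor contribute only to the third column of $f_\#(\partial_2)$, while the first two columns of that row remain $(x+1,\,-(y^2+y+1))$. Since $\partial_2=\cdot(x+1,\,-(y^2+y+1))$ is injective on $\Z T$ (the trefoil complement is aspherical), any $(v_1,v_2,v_3)\in\Ker(f_\#(\partial_2))$ is forced to have $v_1=0$. The remaining kernel condition then reads $v_2(x-1)+v_3(y-1)=0$, coming solely from the \emph{unchanged} relators $[x,r^m]$ and $[y,r^m]$. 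Thus $H_2(X_i;\Z T)\cong\Ker\bigl(\cdot\left(\begin{smallmatrix}x-1\\ y-1\end{smallmatrix}\right)\bigr)=M_0$ for every $i$, and your invariant cannot separate the $X_i$.

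Lustig's presentations (and the paper's algebraic $n$-complexes $E_n^{(i)}$) instead modify the \emph{commutator} relators, replacing $[r^m,x]$, $[r^m,y]$ by $[r^m,x^{2i+1}]$, $[r^m,y^{3i+1}]$. Under $f$ these contribute $(1-x^{2i+1})$ and $(1-y^{3i+1})$ to the third column of rows $2$ and $3$, so after the forced $v_1=0$ one obtains $H_2(X_i;\Z T)\cong\Ker\bigl(\cdot\left(\begin{smallmatrix}x^{2i+1}-1\\ y^{3i+1}-1\end{smallmatrix}\right)\bigr)$, the Berridge--Dunwoody relation module $M_i$. The Harlander--Jensen presentations play no role here; they appear only in the separate argument for $G=\ast_{i=1}^k T$.

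Two smaller points. First, $b_2(T_{(2)})=1$, not $0$: the Mayer--Vietoris sequence for $(T\times\Z)\ast_\Z\Z$ gives $H_2(G)\cong H_2(T\times\Z)\cong\Z$ via K\"unneth. Your Betti-number bound therefore yields only $\chi\ge 0$. Fortunately the theorem as stated asks only for $\chi(X_i)=1$, which is immediate from the $3$-generator $3$-relator form; the stronger claim $\chi_{\min}=1$ requires the cohomological-dimension argument (\cref{prop:syzygies-cd-finite}) used in the paper. Second, the invariant you want is $H_2(f_\#(C_*(\wt X_i)))$, not $f_\#(\pi_2(X_i))$; these differ in general, and it is the former that \cref{prop:ZH-homology} makes into a homotopy invariant.
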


The aim of this section will be to give the following generalisation of this result:

\begin{thm} \label{thm:main-min-EC}
Let $n \ge 2$ and let $G = T_{(n)}$. Then there exists infinitely many finite $(G,n)$-complexes $\wh X_i$ for $i \ge 1$ such that:
\begin{clist}{(i)}
\item $H_n(\wh X_i; \Z T) \cong S_i$ as $\Z T$-modules (where $S_i$ is as defined in \cref{thm:BD})
\item $\chi(\wh X_i) = \chi_{\min}(G,n)$
\end{clist}
\end{thm}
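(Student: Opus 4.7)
The plan is to treat the case $n=2$ directly via Lustig's construction (\cref{thm:lustig}), then extend to $n \ge 3$ using the algebraic realisation machinery of \cref{section:Gn-complexes}.

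For $n=2$, I would construct $\wh X_i$ by adapting Lustig's construction to start from the Harlander-Jensen 2-complex $X_{\mathcal{P}_{\ell_i}}$ (which has $\pi_1 \cong T$ and $\pi_2 \cong S_i$ by \cref{thm:HJ}). Concretely, $\wh X_i$ is obtained from $X_{\mathcal{P}_{\ell_i}}$ by adjoining a 1-cell $r$ and 2-cells enforcing $[r^2,x]=[r^2,y]=1$, together with Tietze moves to arrange $\pi_1(\wh X_i) \cong T_{(2)}$ and $\chi(\wh X_i) = 1$; that this matches $\chi_{\min}(T_{(2)},2)$ is part of Lustig's analysis. For property (i): since $f: T_{(2)} \twoheadrightarrow T$ is a characteristic retraction (\cref{prop:group-construction}), \cref{prop:ZH-homology} makes $H_2(\wh X_i; \Z T)$ a homotopy invariant up to $\Aut(T)$-isomorphism, identified with $H_2(\wh X_i^N; \Z)$ where $N = \Ker(f)$. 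Applying the Scott-Wall graph-of-spaces model (as in the proof of \cref{prop:CW-of-a-wedge}) to the decomposition $T_{(2)} = (T \times \Z) \ast_\Z \Z$, the cover $\wh X_i^N$ decomposes as a tree of copies of $X_{\mathcal{P}_{\ell_i}}$ joined along $r^2$-loops. A Mayer--Vietoris computation then identifies $H_2(\wh X_i^N; \Z) \cong \pi_2(X_{\mathcal{P}_{\ell_i}}) \cong S_i$ as $\Z T$-modules.

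For $n \ge 3$, I would use \cref{prop:realisation-thm}: since $T_{(n)}$ is of type $\FL$ with $\cd(T_{(n)}) = n+1$ (\cref{prop:group-construction}) and $n \ge 3$, the map $\Psi: \PHT(T_{(n)}, n) \to \Alg(T_{(n)}, n)$ is bijective, so it suffices to produce algebraic $n$-complexes $E_i \in \Alg(T_{(n)}, n)$ with the required invariants. The natural route is inductive: start from the algebraic 2-complex associated to the $n=2$ construction, regarded over $\Z T_{(n)}$ via the composition of characteristic retractions $T_{(n)} \twoheadrightarrow T_{(n-1)} \twoheadrightarrow \cdots \twoheadrightarrow T_{(2)}$, and pad with free $\Z T_{(n)}$-modules in dimensions $3, \ldots, n$ to achieve minimal Euler characteristic. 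Then $\wh X_i := \Psi^{-1}(E_i)$ is the desired $(T_{(n)}, n)$-complex, and $H_n(\wh X_i; \Z T) \cong S_i$ follows from compatibility of $f_\#$ with the padding, since the extra free summands contribute trivially after passage through $f_\#$.

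The main obstacle is the explicit computation of $H_2(\wh X_i; \Z T) \cong S_i$ in the $n=2$ case. This demands a careful analysis of Lustig's cellular construction alongside the graph-of-spaces decomposition of the cover $\wh X_i^N$, verifying in particular that the $r$-related cells contribute only lower-dimensional gluing data and do not perturb $H_2$. The extension to $n \ge 3$ is more formal, but still requires ensuring that the algebraic padding preserves the $H_n$ invariant through $f_\#$ and that the resulting Euler characteristic is minimal (so one must also establish the lower bound $\chi_{\min}(T_{(n)}, n) = \chi(\wh X_i)$, which should follow from \cref{prop:rank-computation} together with the fact that $\cd(T_{(n)}) = n+1 > n$ obstructs asphericity).
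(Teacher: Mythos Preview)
Your overall architecture (build explicit complexes, compute $H_n(\,\cdot\,;\Z T)$ via the characteristic quotient $f:T_{(n)}\twoheadrightarrow T$, then verify minimality of $\chi$ using $\cd(T_{(n)})=n+1$) matches the paper's, but both of your concrete constructions have real problems.

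\textbf{The case $n=2$.} Starting from the Harlander--Jensen complex $X_{\mathcal P_{\ell_i}}$ (four generators, four relators, $\chi=1$) and adjoining the $1$-cell $r$ together with the two $2$-cells $[r^2,x]$, $[r^2,y]$ yields a complex with $\chi=2$, whereas $\chi_{\min}(T_{(2)})=1$. You cannot repair this with ``Tietze moves'': any Tietze transformation that lowers $\chi$ changes the homotopy type, and there is no reason the resulting complex should still have $H_2(\,\cdot\,;\Z T)\cong S_i$. The paper does not pass through the Harlander--Jensen complexes at all here; it uses Lustig's three-generator, three-relator presentations
\[
\mathcal P_i=\langle x,y,r\mid x^2=y^3,\ [r^2,x^{2i+1}],\ [r^2,y^{3i+1}]\rangle
\]
directly, and the whole point of the argument is the non-obvious fact that replacing $x,y$ by $x^{2i+1},y^{3i+1}$ in the commutator relators does not change the image of the top boundary map (this is \cref{lemma:identity} and \cref{prop:alg-alterations}(i)). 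Your covering-space picture is also off: the cover of $\wh X_i$ corresponding to $\Ker(f)$ is not a tree of copies of $X_{\mathcal P_{\ell_i}}$; the vertex spaces are themselves covers.

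\textbf{The case $n\ge 3$.} ``Regarding'' a $\Z T_{(2)}$-complex as a $\Z T_{(n)}$-complex via the retractions $T_{(n)}\twoheadrightarrow T_{(2)}$ is restriction of scalars, which destroys freeness of the modules, so you do not land in $\Alg(T_{(n)},n)$. Even if you instead induce along the inclusion $T_{(2)}\hookrightarrow T_{(n)}$, you get an algebraic $2$-complex over $\Z T_{(n)}$ and must then extend it exactly through degrees $3,\dots,n$; ``padding with free modules'' is not a construction, since the boundary maps must be specified and exactness verified. The paper avoids this entirely: it builds $E_n\in\Alg(T_{(n)},n)$ from scratch by an explicit inductive recipe (\cref{lemma:alg(n)-alg(n+1)}) that writes down the new boundary maps $\partial_i^+$ as block matrices, then modifies only the two bottom rows of the top boundary $\Delta_n$ to obtain $E_n^{(i)}$. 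The computation $H_n(E_n^{(i)};\Z T)\cong\Ker\!\left(\cdot\bigl(\begin{smallmatrix}x^{2i+1}-1\\ y^{3i+1}-1\end{smallmatrix}\bigr)\right)$ is then a direct block-triangular calculation after applying $f_\#$ (which kills all the $r_j^2-1$ entries), with no covering-space input needed.
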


\begin{remark}
This corrects a statement made in \cite[Section 5]{HJ06b} where it was suggested that, in the case $n=2$ where the complexes $\wh X_i$ coincide with the $X_i$ from \cref{thm:lustig}, we have $\chi(\wh X_i) = 1 + \chi_{\min}(T_{(2)})$. 
In fact, for each $n \ge 2$, we have $\chi(\wh X_i) = \chi_{\min}(T_{(n)},n) = 1-n$.
\end{remark}

By \cref{prop:ZH-homology}, the $\Aut(T)$-isomorphism class of $H_n(\wh X_i; \Z T)$ is a homotopy invariant and so the $\wh X_i$ are homotopically distinct by \cref{thm:BD}. Hence this implies \cref{thm:main} in the case $k = 0$.

We will begin with the following lemma, which can be verified directly. For a group $G$, the group $G_+$ will be as defined in \cref{construction}.

\begin{lemma} \label{lemma:alg(n)-alg(n+1)}
Let $n \ge 2$, let $G$ be a group and let $E = (\Z G^{d_i}, \partial_i)_{i=0}^n \in \Alg(G,n)$ be an algebraic $n$-complex over $\Z G$ such that $d_0=1$.
If $G_{+} = (G \times \langle q \mid \hspace{-.8mm}-\rangle) \ast_{\langle q = r^2 \rangle} \langle r \mid \hspace{-.8mm}-\rangle$, then:
\[ E_+ = (\Z G_+^{d_{n+1}+d_n} \xrightarrow[]{\partial_{n+1}^+} \Z G_+^{d_n+d_{n-1}} \xrightarrow[]{\partial_{n}^+} \cdots \xrightarrow[]{\partial_2^+} \Z G_+^{d_1+d_0} \xrightarrow[]{\partial_1^+} \Z G_+^{d_0} \xrightarrow[]{\varepsilon_{G_+}} \Z \to 0) \in \Alg(G_+,n+1)\]
where $\partial_1^+ = \cdot\left(\begin{smallmatrix}\partial_1 \\ r-1 \end{smallmatrix}\right)$, $\partial_2^+ = \cdot\left(\begin{smallmatrix}\partial_2 & 0 \\ r^2-1 & -\partial_1 \cdot (r+1) \end{smallmatrix}\right)$ and $\partial_i^+ = \cdot\left(\begin{smallmatrix}\partial_i & 0 \\ r^2-1 & -\partial_{i-1} \end{smallmatrix}\right)$ for $i \ge 3$. The $\partial_*$ are the induced maps and we take $d_{n+1}=0$, $\partial_{n+1}=0$. Since $d_0=1$, $\varepsilon_{G_+}$ is just the augmentation map.
\end{lemma}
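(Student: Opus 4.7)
The proof splits into verifying that $E_+$ is a chain complex of free $\Z G_+$-modules and verifying that it is exact.

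For the chain-complex condition, I would multiply the relevant block matrices $\partial_i^+ \partial_{i+1}^+$ directly. The essential input is that $r^2 = q$ is central in $A = G \times \langle q \rangle$, hence $(r^2 - 1)$ commutes with every element of $\Z G$ inside $\Z G_+$. Consequently $(r^2 - 1)\partial_j = \partial_j (r^2 - 1)$, which kills all off-diagonal entries of $\partial_i^+ \partial_{i+1}^+$, while the diagonal entries collapse to linear combinations of $\partial_j \partial_{j+1} = 0$ coming from $E$ being a chain complex. The case $i = 1$ is slightly special: the $(r+1)$ factor in $\partial_2^+$ is forced by the identity $(r+1)(r-1) = r^2 - 1$, which makes the bottom entry of $\partial_1^+ \partial_2^+$ equal to $(r^2 - 1)\partial_1 - \partial_1 (r+1)(r-1) = 0$.

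For exactness, I would invoke the Mayer--Vietoris short exact sequence for the amalgamated product decomposition $G_+ = A \ast_C B$, with $A = G \times \langle q \rangle$, $B = \langle r \rangle$, and $C = \langle q \rangle$ identified with $\langle r^2 \rangle \subseteq B$:
\[ 0 \to \Z G_+ \otimes_{\Z C} \Z \xrightarrow{(\alpha,\, -\beta)} \Z G_+ \otimes_{\Z A} \Z \,\oplus\, \Z G_+ \otimes_{\Z B} \Z \to \Z \to 0. \]
Free resolutions are available for each term: the standard length-one resolutions $\Z[q] \xrightarrow{q-1} \Z[q]$ over $\Z C$ and $\Z[r] \xrightarrow{r-1} \Z[r]$ over $\Z B$, and the tensor product of $E$ with $\Z[q] \xrightarrow{q-1} \Z[q]$ over $\Z A = \Z G \otimes_\Z \Z[q]$. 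Bass--Serre theory guarantees $\Z G_+$ is right-free over each of $\Z A$, $\Z B$, $\Z C$, so tensoring these resolutions up preserves exactness. Lifting $(\alpha, -\beta)$ to a chain map of the tensored-up resolutions and taking the mapping cone yields a free $\Z G_+$-resolution of $\Z$. The lift $R^C \to R^B$ is multiplication by $1$ in degree $0$ and by $r + 1$ in degree $1$, forced by $(r-1)(r+1) = r^2 - 1$; this is precisely where the $(r+1)$ factor in $\partial_2^+$ originates.

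The mapping cone has ranks exceeding those of $E_+$ by $(1, 2, 1, 0, 0, \dots)$ in degrees $(0, 1, 2, \dots)$. Two rounds of Gaussian elimination then reduce it to $E_+$: first cancel the $R^C$-generator in degree $1$ against the $(1,-1)$-summand in degree $0$; then cancel the $R^C$-generator in degree $2$ against the free summand left in degree $1$ whose boundary is $r^2 - 1$. The main obstacle is keeping track of signs and bases through the second Gaussian elimination and confirming that the residual boundaries are exactly $\partial_1^+$, $\partial_2^+$, and $\partial_i^+$ for $i \ge 3$; beyond this bookkeeping, the argument is mechanical, which is why the author can say it is verified directly.
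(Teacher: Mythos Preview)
The paper offers no proof beyond the sentence ``can be verified directly,'' so there is nothing substantive to compare against. Your proposal is correct. Your chain-complex verification is precisely the kind of direct block-matrix computation the author has in mind, and the key observation---that $r^2=q$ is central in $G_+$ so that $(r^2-1)$ commutes with the induced $\partial_j$---is exactly what makes it work.

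For exactness you take a genuinely different route from the bare-hands check the paper alludes to: you derive $E_+$ as the mapping cone of the Mayer--Vietoris sequence for $G_+ = A \ast_C B$, then trim it by two elementary collapses. This is more conceptual and has the advantage of explaining \emph{why} the differentials take the stated block form (in particular, why the factor $r+1$ appears in $\partial_2^+$), whereas a direct verification merely confirms that they work. Your rank count $(1,2,1,0,\dots)$ for the excess of the cone over $E_+$ is correct assuming $d_0=1$, which the lemma tacitly does. The only caveat is the one you already flag: the second Gaussian elimination requires care with bases and signs to land on the exact matrices stated, but this is bookkeeping rather than a gap.
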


\begin{remark}
A similar statement can be made when $G_{+} = (G \times \langle q \mid \hspace{-.8mm}-\rangle) \ast_{\langle q = r^m \rangle} \langle r \mid \hspace{-.8mm}-\rangle$ for $m \ge 2$. In this case, $r^2-1$ should be replaced by $r^m-1$ in the definition of $\partial_i^+$ for $i \ge 2$, and $-\partial_1 \cdot (r+1)$ should be replaced by $-\partial_1 \cdot (r^{m-1}+\cdots+r+1)$ in the definition of $\partial_2^+$.
\end{remark}

Let $\mathcal{P} = \langle x, y \mid x^2 = y^3 \rangle$ be the standard presentation for $T$ and note that:
\[ C_*(\wt X_{\mathcal{P}}) \cong (\Z T \xrightarrow[]{\partial_2} \Z T^2 \xrightarrow[]{\partial_1} \Z T \xrightarrow[]{\varepsilon_T} \Z \to 0) \in \Alg(T,2)\]
where $\partial_1 = \cdot\left(\begin{smallmatrix} x-1 \\ y-1 \end{smallmatrix}\right)$ and $\partial_2 = \cdot\left(\begin{smallmatrix} x+1 & -(y^2+y+1) \end{smallmatrix}\right)$.
This has $\pi_2(C_*(\wt X_{\mathcal{P}})) = 0$.

For each $n \ge 1$, define $\wt E_n \in \Alg(T_{(n)},n+1)$ by $\wt E_1 = C_*(\wt X_{\mathcal{P}})$ and $\wt E_{n} = (\wt E_{n-1})_+$ for $n \ge 2$ using  \cref{lemma:alg(n)-alg(n+1)}.
Let $E_n \in \Alg(T_{(n)},n)$ denote the restriction to the first $n+1$ terms in $\wt E_n$. 
Note that $\pi_{n+1}(\wt E_n) = 0$ and so $\pi_{n}(E_n) = \IM(\partial_{n+1}^{\wt E_n}) \cong \Z T_{(n)}$.

For $n \ge 2$, let $\Delta_n = \partial_{n}^{\wt E_n}$ denote the final boundary map in $E_n$, so that:
\[ 
\Delta_1 = \partial_1 \cdot (r_1+1), \quad 
\Delta_{n} = \cdot\left(\begin{smallmatrix} v_{n}  & 0  \\ r_{n-1}^2-1 & -\Delta_{n-1} \end{smallmatrix}\right) : \Z T_{(n)}^{n+1} \to \Z T_{(n)}^{\frac{n(n+1)}{2}}
\]
where $v_{n} = (r_{n-2}^2-1, (-1)(r_{n-3}^2-1), \cdots, (-1)^{n-3}(r_1^2-1), (-1)^{n-2}\partial_2)$. Here $\Delta_1$ is defined for the purposes of this definition and 
does not coincide with $\partial_1^{E_1} = \partial_1$.

Let $\alpha_n, \beta_n$ denote the last two row vectors in $\Delta_n$, which are defined by: 
\[ \alpha_1 = (x-1)(r_1+1), \quad \beta_1 = (y-1)(r_1+1)\]
\[ \alpha_n = (\underbrace{0, \cdots, 0}_{n-2} , r_{n-1}^2-1, 0, -\alpha_{n-1}), \quad \beta_n = (\underbrace{0, \cdots, 0}_{n-1} , r_{n-1}^2-1, -\beta_{n-1}).\]
For each $i \ge 0$, let $\alpha_n^{(i)} = \Sigma_x \alpha_n$, $\beta_n^{(i)} = \Sigma_y \beta_n$ where $\Sigma_x = \sum_{j=0}^{2i} x^j$, $\Sigma_y = \sum_{j=0}^{3i} y^j$. 

We will now show that following, where we adopt the notation of \cref{subsection:change-of-group}.

\begin{prop} \label{prop:alg-alterations}
For $n \ge 2$, let $\Delta_n^{(i)}$ be the matrix $\Delta_n$ but with $\alpha_n, \beta_n$ replaced by $\alpha_n^{(i)}, \beta_n^{(i)}$, and let $E_{n}^{(i)}$ be the resolution $E_n$ but with $\Delta_n$ replaced by $\Delta_n^{(i)}$. Then:
\begin{clist}{(i)}
\item $E_n^{(i)} \in \Alg(T_{(n)},n)$
\item If $f : T_{(n)} \twoheadrightarrow T$, then $H_n(E_n^{(i)};\Z T) \cong \Ker(\cdot\left(\begin{smallmatrix}x^{2i+1}-1 \\ y^{3i+1}-1 \end{smallmatrix}\right))$ as $\Z T$-modules.
\end{clist}
\end{prop}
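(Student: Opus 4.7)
The plan is that both parts of the proposition hinge on the identities $\Sigma_x(x-1) = x^{2i+1}-1$ and $\Sigma_y(y-1) = y^{3i+1}-1$ together with the recursive block structure of $\Delta_n$ and the inductive form of $\alpha_n, \beta_n$.

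For part (i), since $E_n^{(i)}$ modifies $E_n$ only at the top boundary $\Delta_n$, exactness at positions $0, 1, \ldots, n-2$ is inherited directly from $E_n$. The substantive content is exactness at position $n-1$, namely $\IM(\Delta_n^{(i)}) = \Ker(\partial_{n-1}^{E_n}) = \IM(\Delta_n)$. The inclusion $\IM(\Delta_n^{(i)}) \subseteq \IM(\Delta_n)$ is immediate since $\Sigma_x \alpha_n$ and $\Sigma_y \beta_n$ are $\Z T_{(n)}$-multiples of rows of $\Delta_n$. For the reverse inclusion, I would use the explicit generator of $\Ker(\Delta_n) \cong \Z T_{(n)}$ arising from $\partial_{n+1}^{\wt E_n}$: for $n=2$ this generator is $N = (r_1^2-1, -(x+1), y^2+y+1)$, and in higher dimensions it is obtained recursively via the $(+)$-construction top-boundary formula $(r_k^2-1, -\partial_k)$. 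The non-trivial relation $N \cdot \Delta_n = 0$ among the rows of $\Delta_n$, combined with the $\Sigma_x, \Sigma_y$ identities above, allows one to express $\alpha_n, \beta_n$ as $\Z T_{(n)}$-linear combinations of $\Sigma_x \alpha_n$, $\Sigma_y \beta_n$, and the unchanged rows of $\Delta_n$.

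For part (ii), apply $f_\#$ with $f(r_k) = 1$, so that $r_k^2 - 1 \mapsto 0$ and $r_1 + 1 \mapsto 2$. A recursive analysis using $\Delta_n = \left(\begin{smallmatrix} v_n & 0 \\ (r_{n-1}^2-1)I & -\Delta_{n-1} \end{smallmatrix}\right)$ shows that $f_\#(\Delta_n^{(i)})$ becomes very sparse: the first row reduces to $(0, \ldots, 0, \pm(x+1), \mp(y^2+y+1), 0, \ldots, 0)$; each of the next $n-2$ rows contains the pattern $(\pm(x+1), \mp(y^2+y+1))$ at a different pair of columns, inherited from the recursively appearing $v_k$'s once all $r_k^2-1$ are killed; and the last two rows, after applying $\Sigma_x, \Sigma_y$ before $f_\#$, reduce to $(0, \ldots, 0, \pm 2(x^{2i+1}-1))$ and $(0, \ldots, 0, \pm 2(y^{3i+1}-1))$, supported only in the final column. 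Computing $\Ker(f_\#(\Delta_n^{(i)}))$: the $n-1$ columns containing $\pm(x+1)$ each force one of the first $n-1$ source coordinates to vanish, using that $\Z T$ is a domain (the trefoil group is torsion-free and left-orderable, being isomorphic to the braid group $B_3$); the final column yields the single equation $v(x^{2i+1}-1) + w(y^{3i+1}-1) = 0$ on the last two coordinates, with the factor of $2$ being harmless since $\Z T$ has no additive torsion. This produces the claimed isomorphism $H_n(E_n^{(i)}; \Z T) \cong \Ker(\cdot\left(\begin{smallmatrix}x^{2i+1}-1 \\ y^{3i+1}-1 \end{smallmatrix}\right))$.

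The main obstacle is the reverse inclusion in part (i). Writing the row modification as $\Delta_n^{(i)} = \Delta_n \circ \phi$ for the endomorphism $\phi$ of $F_n$ that multiplies the last two basis vectors by $\Sigma_x, \Sigma_y$ respectively, the equality $\IM(\Delta_n^{(i)}) = \IM(\Delta_n)$ is equivalent to the decomposition $F_n = \phi(F_n) + \Ker(\Delta_n)$. Modulo the unchanged rows, this reduces to showing that the cyclic submodule $\Z T_{(n)} \cdot N$ projects surjectively onto the quotient $F_n/\phi(F_n) \cong \Z T_{(n)}/(\Sigma_x) \oplus \Z T_{(n)}/(\Sigma_y)$. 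This is a Chinese-remainder-style compatibility between $(x+1) \bmod \Sigma_x$ and $(y^2+y+1) \bmod \Sigma_y$ arising from the trefoil relation $x^2 = y^3$, and this algebraic verification constitutes the bulk of the detailed work.
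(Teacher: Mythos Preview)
Your proposal is correct and follows essentially the same route as the paper. For part (i), your decomposition $F_n = \phi(F_n) + \Ker(\Delta_n)$ is exactly equivalent to the paper's requirement that $\alpha_n,\beta_n$ lie in the $\Z T_{(n)}$-span of $\alpha_n^{(i)},\beta_n^{(i)}$ and the unchanged rows $\gamma_j$; the paper isolates this as a separate lemma (Lemma~\ref{lemma:identity}) giving explicit coefficients $\lambda_j,\mu_j\in\Z T$, which is precisely the ``Chinese-remainder-style'' verification you describe and likewise defer. Your observation that the last two coordinates of the kernel generator $N_n$ are always $\pm(x+1),\mp(y^2+y+1)$ is the same mechanism by which the paper reduces the general $n$ case to $n=2$. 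One small point of phrasing: ``modulo the unchanged rows'' conflates domain and codomain; what you mean is that $\phi(F_n)$ already contains the first $n-1$ coordinate directions of $F_n$, so the surjectivity question lives only in the last two coordinates. For part (ii), your direct sparse-matrix analysis is an unrolled version of the paper's cleaner inductive step, which uses the block form $f_\#(\Delta_n^{(i)}) = \left(\begin{smallmatrix} f_\#(v_n) & 0 \\ 0 & -f_\#(\Delta_{n-1}^{(i)}) \end{smallmatrix}\right)$ together with injectivity of $f_\#(v_n)=(0,\ldots,0,\pm\partial_2)$ to reduce $\Ker(f_\#(\Delta_n^{(i)}))$ to $\Ker(f_\#(\Delta_{n-1}^{(i)}))$ and hence to $\Ker(f_\#(\Delta_1^{(i)}))$.
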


For the convenience of the reader, we will write this explicitly in the case $n=2$:

\newcommand{\matrixa}{\cdot\left(\begin{smallmatrix}
x+1 & -(y^2+y+1) & 0 \\ 
(r_1^2-1)\Sigma_x & 0 & (1-x^{2i+1})(r_1+1) \\ 
0 & (r_1^2-1)\Sigma_y & (1-y^{3i+1})(r_1+1)  
\end{smallmatrix}\right)}

\newcommand{\columna}{\cdot\left(\begin{smallmatrix}x-1 \\ y-1 \\ r_1 -1 \end{smallmatrix}\right)}

\[ E_2^{(i)} = (\Z T_{(2)}^3 \xrightarrow[]{\matrixa} \Z T_{(2)}^3 \xrightarrow[]{\columna} \Z T_{(2)} \xrightarrow[]{\varepsilon} \Z \to 0). \]

In order to prove this, we will first need the following technical lemma.

\begin{lemma} \label{lemma:identity}
Let $G$ be a group with $T \subseteq G$. For $i = 1, 2, 3$, there exists $\lambda_i, \mu_i \in \Z T \subseteq \Z G$ such that, for all $r \in G$, we have:
\[(r-1,0,1-x) = \lambda_1 \cdot (\Sigma_x(r-1),0,1-x^{2i+1}) + \lambda_2 \cdot (0,\Sigma_y(r-1),1-y^{3i+1}) + \lambda_3 \cdot (\partial_2,0) \cdot (r-1)\]
\[(0,r-1,1-y) = \mu_1 \cdot (\Sigma_x(r-1),0,1-x^{2i+1})
+ \mu_2 \cdot (0,\Sigma_y(r-1),1-y^{3i+1}) + \mu_3 \cdot (\partial_2,0) \cdot (r-1)\]
\end{lemma}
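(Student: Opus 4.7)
The plan is to first expand the two claimed vector identities component by component and exploit the fact that $\lambda_1,\lambda_2,\lambda_3,\mu_1,\mu_2,\mu_3 \in \Z T$ are fixed while $r \in G$ is arbitrary. In the first two components each summand is a left-$\Z T$-multiple of $(r-1)$, so matching the coefficient of $r$ with the constant term collapses the $r$-dependence. The third component contains no $r$ at all. In this way the lemma is equivalent to solving in $\Z T$ the systems
\begin{align*}
\lambda_1 \Sigma_x + \lambda_3(x+1) &= 1, & \mu_1 \Sigma_x + \mu_3(x+1) &= 0, \\
\lambda_2 \Sigma_y - \lambda_3(y^2+y+1) &= 0, & \mu_2 \Sigma_y - \mu_3(y^2+y+1) &= 1, \\
\lambda_1(x^{2i+1}-1) + \lambda_2(y^{3i+1}-1) &= x-1, & \mu_1(x^{2i+1}-1) + \mu_2(y^{3i+1}-1) &= y-1.
\end{align*}

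Next I would show the third equation in each column is a consequence of the first two. Multiplying the first equation on the right by $x-1$, using that $\Sigma_x, x-1, x+1$ commute pairwise in $\Z[x]$ and likewise $\Sigma_y, y-1, y^2+y+1$ in $\Z[y]$, and invoking the fundamental identity $(x-1)(x+1) = x^2-1 = y^3-1 = (y-1)(y^2+y+1)$ that holds in $\Z T$, lets one substitute the second equation to eliminate $\lambda_3$ (respectively $\mu_3$) and recover exactly the third. Thus only the first two equations remain.

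For the first equation, polynomial division in $\Z[x]$ (pairing consecutive terms of $\Sigma_x$) yields $\Sigma_x = 1 + \sigma(x+1)$ with $\sigma = x + x^3 + \cdots + x^{2i-1}$, giving the particular solution $\lambda_1 = 1$, $\lambda_3 = -\sigma$, with general solution $\lambda_1 = 1 + \nu(x+1)$, $\lambda_3 = -\sigma - \nu\Sigma_x$ for any $\nu \in \Z T$. For the second equation, writing $\sigma = x\tau$ with $\tau = 1 + x^2 + \cdots + x^{2i-2} = 1 + y^3 + \cdots + y^{3(i-1)}$ (using $x^2 = y^3$) and computing $\tau(y^2+y+1) = \Sigma_y - y^{3i}$ in $\Z[y]$ collapses the right-hand side so that the naive choice $\nu = 0$ forces $(\lambda_2 + x)\Sigma_y = x^{2i+1}$, which is obstructed by the augmentation $\varepsilon: \Z T \to \Z$. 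Adjusting by any $\nu$ with $\varepsilon(\nu) \equiv 1 \pmod{3i+1}$ (e.g.\ $\nu = 1$) clears this obstruction, and one can then use the identity $(1+x)^2 = 1 + 2x + y^3$ to assemble an explicit $\lambda_2 \in \Z T$. The $\mu$ system is handled in exactly the same fashion with the roles of $(x, \Sigma_x, x+1)$ and $(y, \Sigma_y, y^2+y+1)$ interchanged.

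The main obstacle is the second equation of the $\lambda$ system: the natural solution to the first equation is incompatible with the second at the level of augmentation, and the remedy requires recognising the correct kernel element of the first equation and using the central relation $x^2 = y^3$ to rewrite $\sigma(y^2+y+1)$ in a form that is visibly a left-multiple of $\Sigma_y$. Once this is in place, all remaining verifications are direct manipulations in the group ring $\Z T$.
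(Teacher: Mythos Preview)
The paper states this lemma without proof, so there is no argument to compare against directly; what follows assesses your approach on its own terms and against what the surrounding text suggests.

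Your reduction to the three scalar equations in $\Z T$ is correct, as is your derivation that the third equation follows from the first two via $(x+1)(x-1)=x^2-1=y^3-1=(y^2+y+1)(y-1)$. The gap is in the final step. You solve equation~1 with a free parameter $\nu$, correctly observe that $\nu=0$ is obstructed at the augmentation, and then assert that a suitable $\nu$ ``clears this obstruction'' so that ``one can then use the identity $(1+x)^2=1+2x+y^3$ to assemble an explicit $\lambda_2$''. But passing the augmentation test is only necessary, not sufficient: you have not shown that $\lambda_3(y^2+y+1)$ actually lies in $\Z T\cdot\Sigma_y$ for your chosen $\lambda_3$, and in the noncommutative domain $\Z T$ this is not automatic. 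The reference to $(1+x)^2$ is not a verification.

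A cleaner route --- and the one the paper's own use of the lemma (``$\Z T\cdot\{x-1,y-1\}=\Z T\cdot\{x^{2i+1}-1,y^{3i+1}-1\}$'') points toward --- is to reverse your logic. First solve the \emph{third} equation: since $\gcd(2i+1,3i+1)=1$, the elements $x^{2i+1}$ and $y^{3i+1}$ generate $T$, so the left ideal $\Z T(x^{2i+1}-1)+\Z T(y^{3i+1}-1)$ is the full augmentation ideal and in particular contains $x-1$; this produces $\lambda_1,\lambda_2$. Then
\[
(\lambda_1\Sigma_x-1,\ \lambda_2\Sigma_y)\cdot\partial_1
=\lambda_1(x^{2i+1}-1)+\lambda_2(y^{3i+1}-1)-(x-1)=0,
\]
so $(\lambda_1\Sigma_x-1,\lambda_2\Sigma_y)\in\Ker(\partial_1)=\IM(\partial_2)$ by exactness of the standard aspherical resolution of $T$. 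Hence there exists $\lambda_3\in\Z T$ with $(\lambda_1\Sigma_x-1,\lambda_2\Sigma_y)=-\lambda_3\cdot\partial_2$, which is exactly your equations~1 and~2. The $\mu$-system is handled identically with $y-1$ in place of $x-1$. This replaces the delicate divisibility computation you were attempting by a single appeal to $\Ker(\partial_1)=\IM(\partial_2)$.
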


\begin{proof}
Note that $x = (x^{2i+1})^{1-6i} (y^{3i+1})^{6i}$, $y = (y^{3i+1})^{6i+1}(x^{2i+1})^{-6i}$, which implies that $\langle x, y \rangle = \langle x^{2i+1}, y^{3i+1} \rangle$ and so $\Z T \cdot \{ x-1, y-1 \} = \Z T \cdot \{ x^{2i+1}-1,y^{3i+1}-1 \}$. 
We can use this to show that:
\begin{align*}
\lambda_1 &= -(x^{2i+1})^{1-6i}(1+x^{2i+1}+\cdots+(x^{2i+1})^{6i-2})\\ 
\lambda_2 &= (x^{2i+1})^{1-6i}(1+y^{3i+1}+\cdots+(y^{3i+1})^{6i-1})
\end{align*}
are such that $x-1 = \lambda_1(x^{2i+1}-1)+\lambda_2(y^{3i+1}-1)$. By comparing with the other terms in $(r-1,0,x-1)$, we can then show that
\begin{align*}
\lambda_3 &= (x^{2i+1})^{1-6i}(1+x^2+\cdots+(x^2)^{6i^2+2i-1}) \hspace{6mm}
\end{align*}
has the desired property. The $\mu_i$ can be found similarly and are omitted.
\end{proof} 

\begin{proof}[Proof of \cref{prop:alg-alterations}]
To prove (i), it suffices to show that $\IM(\cdot\Delta_n^{(i)}) = \IM(\cdot\Delta_n)$ for $i \ge 1$. We have $\IM(\cdot\Delta_n^{(i)}) \subseteq \IM(\cdot\Delta_n)$, so it remains to show $\alpha_n, \beta_n \in \IM(\cdot\Delta_n^{(i)})$.

By the proof of \cref{lemma:identity}, we have $\Z T \cdot \{ x-1, y-1 \} = \Z T \cdot \{ x^{2i+1}-1,y^{3i+1}-1 \}$. It follows that $\Z T \cdot \{ \alpha_1, \alpha_2\} = \Z T \cdot \{\alpha_1^{(i)}, \beta_1^{(i)}\}$ which implies that $\alpha_1, \beta_1 \in \IM(\cdot\Delta_1^{(i)})$.
The case $n = 2$ is done in \cref{lemma:identity}, which provides $\lambda_i$ such that:
\[ \alpha_2 = \lambda_1 \cdot \alpha_2^{(i)} + \lambda_2 \cdot \beta_2^{(i)} + \lambda_3(r_1^2-1) \cdot (\partial_2,0)  \]
and similarly for $\mu_i$ and $\beta_2$.
Let $\gamma_1, \cdots , \gamma_{n-1}$ denote the first $n-1$ rows of $\Delta_n$, the remaining two rows being $\alpha_n, \beta_n$. It is now straightforward to see that:
\[ \alpha_n = \lambda_1 \cdot \alpha_n^{(i)} + \lambda_2 \cdot \beta_n^{(i)} + \lambda_3(-1)^n((r_{n-1}^2-1) \cdot \gamma_1 + \sum_{i=2}^{n-1} (-1)^{i}(r_{n-i}^2-1) \cdot \gamma_{i}) \]
for $n \ge 2$, and similarly for $\beta_n$. Hence $\alpha_n, \beta_n \in \IM(\cdot\Delta_n^{(i)})$ for all $n \ge 2$.

To prove (ii), note that $H_n(E_n^{(i)}; \Z T) = \Ker(f_\#(\Delta_n^{(i)}))$. For each $n \ge 2$, we have:
\[ f_\#(\Delta_n^{(i)}) = \cdot \left(\begin{smallmatrix} f_\#(v_n) & 0 \\ 0 & - f_\#(\Delta_{n-1}^{(i)})  \end{smallmatrix}\right). \]
Since $f_\#(v_n) = (0, \cdots, 0, (-1)^{n-2}\partial_2)$ is injective, this implies that $\Ker(f_\#(\Delta_n^{(i)})) = \Ker(-f_\#(\Delta_{n-1}^{(i)}))$ and so, by induction:
\[ \Ker(f_\#(\Delta_n^{(i)})) \cong \Ker(f_\#(\Delta_1^{(i)})) = \Ker(\cdot\left(\begin{smallmatrix} 2(x^{2i+1}-1) \\ 2(y^{3i+1}-1) \end{smallmatrix}\right)) = \Ker(\cdot\left(\begin{smallmatrix} x^{2i+1}-1 \\ y^{3i+1}-1 \end{smallmatrix}\right)). \qedhere \]
\end{proof}
\vspace{-1mm}

Let $G = T_{(n)}$. For each $i \ge 1$, there exists $\ell_i$ such that $\Ker(\cdot\left(\begin{smallmatrix} x^{2\ell_i+1}-1 \\ y^{3\ell_i+1}-1 \end{smallmatrix}\right)) \cong S_i$ where the $S_i$ are as defined in the discussion following \cref{thm:BD}

If $n \ge 3$, then \cref{prop:realisation-thm} implies that there exists finite $(G,n)$-complexes $\wh X_i$ such that $C_*(X) \simeq E_n^{(\ell_i)}$ are chain homotopy equivalent where $X$ is the universal cover of $\wh X_i$. This is also true when $n=2$ by taking $\wh X_i = X_i = \mathcal{P}_{\ell_i}$ where:
\[ \mathcal{P}_i = \langle a, b, c \mid a^2=b^3, [a^2,b^{2i+1}], [a^2,c^{3i+1}] \rangle \]
are the presentations given by Lustig in \cite{Lu93}.

By \cref{prop:alg-alterations}, $H_n(\wh X_i; \Z T) \cong S_i$ as $\Z T$-modules. It is straightforward to see that 
\[\rank(\pi_n(E_n^{(\ell_i)})) = \rank(\pi_n(E_n)) = 1.\] 
By \cref{prop:modules-over-quotients}, $\cd(G) = n+1$ and so $0 \not \in \IM(\pi_n : \PHT(G,n) \to \Omega_{n+1}(\Z))$ by \cref{prop:syzygies-cd-finite}. Hence, by \cref{prop:rank-computation}, we have $\chi(X_i) = \chi_{\min}(G,n)$. This completes the proof of \cref{thm:main-min-EC}. By combining with \cref{thm:main-non-min-EC}, this completes the proof of \cref{thm:main}.

\section{Proof of Theorem \ref{thm:main-SF-further}}
\label{section:proof-main-further}

The aim of this section will be to prove the following theorem which its Theorems \ref{thm:main-SF-further}. We will also give an application of this to the construction of non-finite $(G,n)$-complexes.
The proofs are similar to that of Theorems \ref{thm:main-SF} and \ref{thm:main} and so many of the details will be omitted.

We will let $T$ denote the trefoil group and $T_{(d-1)}$ be the result of applying \cref{construction}.

\begin{thm} \label{thm:main-SF-further-detailed}
Let $d \ge 2$ and let $G = \ast_{i=1}^\infty T_{(d-1)}$. Then $\cd(G) = d$ and, for all $k \ge 1$, there exists infinitely many stably free $\Z G$-modules of rank $k$ which are distinct up to $\Aut(G)$-isomorphism.
\end{thm}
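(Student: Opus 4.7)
The plan is to combine \cref{prop:SF-trefoil} with the observation that the characteristic-quotient arguments of \cref{section:proof-main-algebra} extend to the infinite free product $G$, since the Kurosh subgroup theorem imposes no cardinality restrictions. For the cohomological dimension, each factor $T_{(d-1)} \subseteq G$ has $\cd(T_{(d-1)}) = d$ by \cref{prop:group-construction}, so $\cd(G) \ge d$; for the upper bound, the action of $G$ on its Bass--Serre tree yields a short exact sequence of $\Z G$-modules of the form $0 \to \bigoplus \Z G \to \Z G \oplus \bigoplus \Ind_{T_{(d-1)}}^G \Z \to \Z \to 0$, and Shapiro's lemma combined with \cref{prop:FP+cd} endows each $\Ind_{T_{(d-1)}}^G \Z$ with a length-$d$ free resolution over $\Z G$, so splicing produces a length-$d$ projective resolution of $\Z$.

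Fix $k \ge 1$, let $H_k = \ast_{a=1}^k T_a$ with each $T_a \cong T$, and let $\iota: H_k \hookrightarrow G$ send $T_a$ into the $a$th factor $T_{(d-1),a}$ of $G$ via the standard inclusion $T \hookrightarrow T_{(d-1)}$. Setting $\wh S_i = \iota_\#(S_{i, \ldots, i})$ with $S_{i, \ldots, i}$ as in \cref{prop:SF-trefoil}, the relation $S_{i, \ldots, i} \oplus \Z H_k \cong \Z H_k^{k+1}$ immediately induces $\wh S_i \oplus \Z G \cong \Z G^{k+1}$, so each $\wh S_i$ is stably free of rank $k$. To distinguish these up to $\Aut(G)$-isomorphism, define $\Phi: G \twoheadrightarrow \bar G := \ast_{a=1}^\infty T_a/T_a''$ by composing $T_{(d-1),a} \twoheadrightarrow T_a \twoheadrightarrow T_a/T_a''$ on each factor; each component is characteristic by \cref{prop:group-construction} together with the fact that $T''$ is characteristic in $T$, and since all factors are mutually isomorphic via the common map, the proof of \cref{prop:free-product-char} extends verbatim to infinite free products, showing $\Phi$ is characteristic.

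Assume $\wh S_i \cong (\wh S_j)_\theta$ for some $\theta \in \Aut(G)$. Then \cref{prop:modules-over-quotients} yields $\F_{p_i} \otimes \Phi_\#(\wh S_i) \cong (\F_{p_i} \otimes \Phi_\#(\wh S_j))_{\bar \theta}$ for the prime $p_i$ from \cref{thm:BD}, and the $\Aut$-invariance of the free module $\F_{p_i} \bar G$ reduces matters to showing $\F_{p_i} \otimes \Phi_\#(\wh S_j) \not\cong \F_{p_i} \bar G^k$ whenever $i \ne j$. For this, apply the retraction $\bar r_L: \bar G \twoheadrightarrow \bar G_L := \ast_{a=1}^L T_a/T_a''$ for any $L \ge k$: transitivity of induction identifies $(\bar r_L)_\#(\F_{p_i} \otimes \Phi_\#(\wh S_j))$ with the induced $\F_{p_i} \bar G_L$-module $\bigoplus_{a=1}^k (\bar \iota_a^L)_\#(\F_{p_i} \otimes f_\#(S_j))$ (where $f: T \to T/T''$), and each factor $\F_{p_i} \otimes f_\#(S_j)$ has no $\F_{p_i}[T/T'']$-summand by the stable-finiteness argument from the proof of \cref{prop:SF-trefoil}. \cref{cor:Bergman}(ii) applied to the finite free product $\bar G_L$ then forces $(\bar r_L)_\#(\F_{p_i} \otimes \Phi_\#(\wh S_j))$ to have no $\F_{p_i} \bar G_L$-summand, contradicting $(\bar r_L)_\#(\F_{p_i} \bar G^k) = \F_{p_i} \bar G_L^k$.

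The main obstacle is the extension of \cref{prop:free-product-char} to the infinite free product $G$, but this is routine because the Kurosh subgroup theorem works for arbitrary free products; the reduction via $\bar r_L$ likewise sidesteps any question about Bergman's theorem for infinite coproducts.
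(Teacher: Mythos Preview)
Your argument is correct and follows the same overall strategy as the paper: construct $\wh S_i$ by inducing the Berridge--Dunwoody modules $S_i$ from $k$ trefoil factors, pass to the characteristic quotient $\Phi: G \twoheadrightarrow \ast_{a=1}^\infty T_a/T_a''$ (via the infinite-free-product extension of \cref{prop:free-product-char}), and distinguish over $\F_{p_i}$ using the Berridge--Dunwoody primes. There are two minor differences worth noting. First, the paper does not address $\cd(G)=d$ in its proof at all, whereas you supply a clean argument via the Bass--Serre sequence and Shapiro's lemma. Second, and more interestingly, the paper simply asserts that \cref{thm:Bergman} and \cref{cor:Bergman} hold for infinite coproducts (citing Bergman's original paper), while you instead retract $\bar G$ onto a finite truncation $\bar G_L$ before invoking \cref{cor:Bergman}(ii); this keeps the argument strictly within the finite-coproduct statements proved in the paper and is arguably cleaner, at the modest cost of the extra retraction step.
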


Let $S_i$ denote the stably free $\Z T$-modules of \cref{thm:BD} and let $\iota_j : T_j \hookrightarrow G$.

\begin{proof}
Let $k \ge 1$ and let $\wh S_i^{(k)} = \bigoplus_{j=1}^k {\iota_j}_\#(S_i)$ for $i \ge 1$. Since $\wh S_i^{(k)} \oplus \Z G \cong \Z G^{k+1}$, the $\wh S_i^{(k)}$ are stably free $\Z G$-modules of rank $k$.
Let $f: G \twoheadrightarrow \ast_{j=1}^\infty T_j/T_j''$ be induced by the characteristic quotients $f_j : (T_j)_{(d-1)} \twoheadrightarrow T_j/T_j''$.
This is characteristic by a mild generalisation of \cref{prop:free-product-char} which applies since $T_j$ is finitely generated.

For $p$ prime, we have that $\F_p \otimes f_\#(\wh S_i^{(k)}) \cong \oplus_{j=1}^k \text{$\bar{\iota}_j$}_\#(\F_p \otimes {f_j}_\#(S_i))$ where $\text{$\bar{\iota}_j$} : T_j/T_j'' \hookrightarrow \ast_{j=1}^\infty T_j/T_j''$ is the inclusion map.
Similarly to the proof of \cref{thm:main-SF-detailed}, there exists primes $p_i$ for $i \ge 1$ such that $\F_{p_i} \otimes {f_j}_\#(S_i) \cong \F_{p_i} [T_j/T_j'']$ if and only if $i = j$. Since \cref{thm:Bergman} and \cref{cor:Bergman} also holds for infinite free products (see \cite{Be74}), we get the $\F_p \otimes f_\#(\wh S_i^{(k)})$ are distinct up to $\Aut(G)$-isomorphism. Since $f$ is characteristic, the $\wh S_i^{(k)}$ are distinct up to $\Aut(G)$-isomorphism also.
\end{proof}

\begin{thm} \label{thm:main-further-detailed}
Let $n \ge 2$ and let $G = \ast_{i=1}^\infty T_{(n-1)}$. 
Then there exists an aspherical $(G,n)$-complex $Y$ such that, for all $k \ge 1$, there are infinitely many homotopically distinct $(G,n)$-complexes $X_i$ with $X_i \vee S^n \simeq Y \vee (k+1)S^n$.
\end{thm}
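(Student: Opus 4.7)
The plan is to realise each stably free $\Z G$-module $\wh S_i^{(k)}$ from \cref{thm:main-SF-further-detailed} as $\pi_n(X_i)$ of an explicitly constructed $(G,n)$-complex $X_i$. For the aspherical backbone, for each $j\ge 1$ let $K_j$ be a finite aspherical $(T_{(n-1),j},n)$-complex, which exists since $T_{(n-1),j}$ is finitely presented, of type $\FL$, with $\cd = n$. Then $Y = \bigvee_{j=1}^\infty K_j$ is an aspherical $(G,n)$-complex. For each $i \ge 1$ and $j = 1,\dots,k$, I would realise the rank-one stably free $\Z T_{(n-1),j}$-module ${\iota'_j}_\#(S_i)$ (with $\iota'_j\colon T_j\hookrightarrow T_{(n-1),j}$ the trefoil inclusion) as the top homotopy of a finite $(T_{(n-1),j},n)$-complex $Z^{(i)}_j$, using \cref{cor:homotopy-cd-finite} when $n \ge 3$ and the Harlander--Jensen complex $X_{\mathcal{P}_{\ell_i}}$ of \cref{thm:HJ} when $n = 2$. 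Set
\[ X_i = \bigvee_{j=1}^k Z^{(i)}_j \;\vee\; \bigvee_{j=k+1}^\infty K_j. \]

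Iterating \cref{cor:pi_2-of-wedge} (whose derivation via \cref{prop:CW-of-a-wedge} extends to infinite wedges) and using asphericity of the $K_j$ identifies $\pi_n(X_i)$ with $\wh S_i^{(k)}$. The $\Aut(G)$-isomorphism invariance of $\pi_n$ from \cref{section:module-invariants}, together with \cref{thm:main-SF-further-detailed}, then forces the $X_i$ to be homotopically distinct. For the equivalence $X_i\vee S^n\simeq Y\vee (k+1)S^n$, I would wedge each side with $\bigvee_{j=k+1}^\infty K_j$ and reduce to the \emph{finite} statement
\[ \bigvee_{j=1}^k Z^{(i)}_j \vee S^n \;\simeq\; \bigvee_{j=1}^k K_j \vee (k+1)S^n \]
of $(H,n)$-complexes, where $H = \ast_{j=1}^k T_{(n-1),j}$ is finitely presented of type $\FL$ with $\cd(H) = n$. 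Both sides have $\pi_n$ free of rank $k+1$ --- the right by inspection, the left by the Bergman free-transfer computation from the proof of \cref{thm:main-SF-further-detailed} specialised to $H$, which cascades a single $\Z H$ summand through $k$ steps to absorb all of $\bigoplus_j {\iota''_j}_\#(S_i)$. Since $\pi_n\colon\Alg(H,n)\to\Omega_{n+1}(\Z)$ is bijective by \cref{prop:syzygies-cd-finite}(ii), the cellular chain complexes over $\Z H$ are chain homotopy equivalent, and the injectivity of $\Psi$ in \cref{prop:realisation-thm} promotes this to a homotopy equivalence.

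The delicate point is the $n = 2$ case of the final step, where $\Psi\colon\PHT(H,2)\to\Alg(H,2)$ is only known to be injective in general. The strategy is designed precisely to get by with this injectivity combined with the $\pi_n$-bijection on $\Alg(H,2)$, so it avoids having to assume a D2 property either for the finitely presented group $H$ or, a fortiori, for the infinite group $G$.
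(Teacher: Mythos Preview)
Your proposal is correct and mirrors the paper's proof: take $Y$ as an infinite wedge of finite aspherical $(T_{(n-1)},n)$-complexes, replace the first $k$ factors to realise $\wh S_i^{(k)}$ as $\pi_n(X_i)$, distinguish the $X_i$ via the $\Aut(G)$-class of $\pi_n$, and establish $X_i \vee S^n \simeq Y \vee (k+1)S^n$ by reducing to a statement about finite complexes and applying bijectivity of $\pi_n$ on $\Alg$ together with injectivity of $\Psi$. The only differences are cosmetic: the paper carries out the last reduction factor-by-factor over $T_{(n-1)}$ (showing $\wh X_i \vee S^n \simeq Y_0 \vee 2S^n$ and then cascading) rather than in one shot over your $H$, and for $n=2$ it obtains the aspherical $K_j$ from the explicit presentation $\langle x,y \mid x^2=y^3\rangle$ rather than from ``$\cd(T)=2$ and type $\FL$'', which as stated would appeal to the open Eilenberg--Ganea conjecture.
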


\begin{proof}
By \cref{lemma:alg(n)-alg(n+1)}, there exists $\wt E_{n-1} \in \Alg(T_{(n-1)},n)$ with $\pi_n(\wt E_{n-1}) = 0$.  If $n \ge 3$, then \cref{prop:realisation-thm} implies that there exists a finite $(G,n)$-complex $Y_0$ such that $C_*(\wt Y_0) \simeq \wt E_{n-1}$ are chain homotopy equivalent. This is also true when $n=2$ by taking $Y_0 = X_{\mathcal{P}}$ where $\mathcal{P} = \langle x, y \mid x^2=y^3 \rangle$ is the standard presentation for $T$. Hence, for all $n \ge 2$, $Y = \vee_{i=1}^\infty Y_0$ is an aspherical $(G,n)$-complex.

For all $i \ge 1$, let $X_i = \bigvee_{j=1}^k \wh X_i \vee \bigvee_{j=k+1}^\infty Y_0$ where the $\wh X_i$ are the finite $(T_{(n-1)},n)$-complexes such that $\pi_n(\wh X_i) \cong S_i$ which were constructed in \cref{thm:main-non-min-EC}.
Then $X_i$ is a $(G,n)$-complex such that:
\[ \pi_n(X_i) \cong \bigoplus_{j=1}^k {\iota_j}_\#(\pi_n(\wh X_i)) \oplus \bigoplus_{j=k+1}^\infty {\iota_j}_\#(\pi_n(Y)) \cong \bigoplus_{j=1}^k {\iota_j}_\#(S_i) = \wh S_i^{(k)}.  \]
Since the $\wh S_i^{(k)}$ are distinct up to $\Aut(G)$-isomorphism, this implies that the $X_i$ are homotopically distinct.
By \cref{thm:BD} and \cref{prop:syzygies-cd-finite}, we have that $\wh X_i \vee S^n \simeq Y_0 \vee 2S^n$. It follows that $X_i \vee S^n \simeq Y \vee (k+1)S^n$, as required.
\end{proof}

\section{Some remarks on induced module decompositions}
\label{section:induced}

Recall that Theorems \ref{thm:main-SF} and \ref{thm:main} concerned stably free $\Z G$-modules and finite $2$-complexes $X$ with $\pi_1(X) \cong G$ where $G = \ast_{i=1}^k G_i$.
In our example, $\pi_2(X) \otimes \F_p$ was an induced $\F_p G$-module whose component $\F_p T$-modules $M_i$ were unique up to $\F_p T$-isomorphism where $G_i = T$ is the trefoil group.

The aim of this section will be to investigate the extent to which this applies to all groups of the form $G = \ast_{i=1}^k G_i$ and to $\pi_2(X)$ rather than just $\pi_2(X) \otimes \F$.
For simplicity, we will restrict to the case of $2$-complexes. However, all results have analogues for $(G,n)$-complexes for $n \ge 3$.
The main result is \cref{thm:main-free-product}, which was stated in the introduction. Part (i) will be proven as \cref{thm:non-existence} and part (ii) will be proven as \cref{thm:non-uniqueness}.

\subsection{Existence of induced module decompositions}
\label{subsection:existence}

We will begin by considering the question of existence. From now on, we will take $\F$ to be a field.

\begin{prop}[Existence over \text{$\F[G_1 \ast \cdots \ast G_k]$}] \label{prop:existence}
Let $X$ be a finite $2$-complex with $\pi_1(X) \cong G_1 \ast \cdots \ast G_k$. Then $\pi_2(X) \otimes \F$ is an induced $\F[G_1 \ast \cdots \ast G_k]$-module.
\end{prop}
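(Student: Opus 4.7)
The plan is to realize $\pi_2(X) \otimes \F$ as a submodule of a finitely generated free $\F G$-module, and then apply Bergman's theorem (\cref{thm:Bergman}(i)) to conclude that it must itself be induced. Writing $G = G_1 \ast \cdots \ast G_k$, the key observation is that any finitely generated free $\F G$-module is trivially induced: namely, $\F G \cong {\iota_1}_\#(\F G_1)$ so that $\F G^n \cong {\iota_1}_\#(\F G_1^n) \oplus {\iota_2}_\#(0) \oplus \cdots \oplus {\iota_k}_\#(0)$ is an induced $\F G$-module in the sense of \cref{subsection:Bergman}.

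The first step is to identify $\pi_2(X) \otimes \F$ with a concrete submodule of the chain module $C_2(\wt X) \otimes \F$. Since $X$ is a finite $2$-complex, $C_*(\wt X)$ is a chain complex of finitely generated free $\Z G$-modules with $C_i(\wt X) = 0$ for $i \ge 3$, and tensoring with $\F$ yields a chain complex of finitely generated free $\F G$-modules whose degree-$2$ homology is $\Ker(\partial_2 \otimes \id_\F)$. Since $\wt X$ is simply connected, $H_1(\wt X; \Z) = 0$, so the universal coefficient theorem gives an $\F G$-module isomorphism
\[ \pi_2(X) \otimes \F \cong H_2(\wt X; \Z) \otimes \F \cong H_2(\wt X; \F) \cong \Ker(\partial_2 \otimes \id_\F) \subseteq C_2(\wt X) \otimes \F. \]
(Equivalently, one can observe directly that $\IM(\partial_2)$ is a subgroup of the free abelian group $C_1(\wt X)$ and hence $\Z$-free, so $\Tor_1^\Z(\IM(\partial_2), \F) = 0$ and tensoring the short exact sequence $0 \to \pi_2(X) \to C_2(\wt X) \to \IM(\partial_2) \to 0$ with $\F$ remains exact.)

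The second step is to apply \cref{thm:Bergman}(i): since $C_2(\wt X) \otimes \F$ is a finitely generated free, hence induced, $\F G$-module, every submodule of it is an induced $\F G$-module. In particular, $\pi_2(X) \otimes \F$ is induced, which is exactly what we wanted to show. No step here is delicate; the content of the argument is entirely encapsulated in Bergman's theorem, and the role of this proof is simply to exhibit $\pi_2(X) \otimes \F$ as a submodule of a free $\F G$-module via the cellular chain complex.
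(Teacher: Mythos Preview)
Your argument is correct and in fact more direct than the paper's. Both proofs finish the same way, by applying \cref{thm:Bergman}(i) to conclude that a submodule of a finitely generated induced $\F G$-module is induced; the difference lies in which ambient induced module is used. The paper chooses finite $2$-complexes $X_i$ with $\pi_1(X_i)\cong G_i$, uses the stable equivalence $X\vee aS^2\simeq\bigvee_{i=1}^k X_i\vee bS^2$, and then invokes \cref{cor:pi_2-of-wedge} to exhibit $(\pi_2(X)\otimes\F)\oplus\F G^a$ as an induced module built from the $\pi_2(X_i)\otimes\F$. You bypass all of this by observing that $C_2(\wt X)\otimes\F$ is already finitely generated free, hence induced via $\F G^n\cong{\iota_1}_\#(\F G_1^n)$, and that $\pi_2(X)\otimes\F$ embeds in it since $\IM(\partial_2)$ is $\Z$-free. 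Your route avoids both the wedge formula and the stable equivalence, at the cost of giving a less ``structured'' ambient module: the paper's embedding makes the connection to the individual $\pi_2(X_i)$ visible, which is thematically closer to the uniqueness discussion that follows, but for the bare existence statement your argument is cleaner.
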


\begin{proof}
Let $X_i$ be a finite $2$-complex with $\pi_1(X_i) \cong G_i$. Then $\pi_1(\vee_{i=1}^k X_i) \cong \ast_{i=1}^k G_i$ and so there exists $a, b \ge 0$ such that $X \vee aS^2 \simeq \vee_{i=1}^k X_i \vee bS^2$. This implies that
\[ (\pi_2(X) \otimes \F) \oplus \F G^a \cong {\iota_1}_\#((\pi_2(X_1) \otimes \F) \oplus \F G_1^b) \oplus  \bigoplus_{j = 2}^k {\iota_j}_\#(\pi_2(X_j) \otimes \F) \]
and so $\pi_2(X) \oplus \F$ is a submodule of an induced $\F[G_1 \ast \cdots \ast G_k]$-module. Hence, by \cref{thm:Bergman}, $\pi_2(X) \oplus \F$ is an induced $\F[G_1 \ast \cdots \ast G_k]$-module.
\end{proof}

\begin{thm}[Non-existence over \text{$\Z[G_1 \ast \cdots \ast G_k]$}] \label{thm:non-existence}
For all $k \ge 2$, there exists a finite $2$-complex $X$ with $\pi_1(X) \cong G_1 \ast \cdots \ast G_k$ such that $\pi_2(X)$ is not an induced $\Z[G_1 \ast \cdots \ast G_k]$-module.
\end{thm}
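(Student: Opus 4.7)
My plan is to split the proof into a reduction step and a core construction for $k = 2$.

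\emph{Reduction to $k = 2$.} Suppose we have produced a finite 2-complex $X'$ with $\pi_1(X') \cong G_1 \ast G_2$ whose $\pi_2(X')$ is not an induced $\Z[G_1 \ast G_2]$-module. For $k > 2$, I would choose non-trivial free groups $F_3, \ldots, F_k$ together with $X_{F_i}$ a wedge of circles realising $F_i$, and set $X = X' \vee X_{F_3} \vee \cdots \vee X_{F_k}$. Then $\pi_1(X) \cong G := G_1 \ast G_2 \ast F_3 \ast \cdots \ast F_k$, and iterating \cref{cor:pi_2-of-wedge} gives $\pi_2(X) \cong \iota_\#(\pi_2(X'))$, where $\iota : G_1 \ast G_2 \hookrightarrow G$. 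The projection $q : G \twoheadrightarrow G_1 \ast G_2$ killing the $F_i$ is characteristic by \cref{prop:free-product-char} and satisfies $q \circ \iota = \id$. If $\pi_2(X) \cong \bigoplus_{i=1}^k {\iota_i^G}_\#(N_i)$ were induced, then applying $q_\#$ and noting that projective $\Z F_i$-modules are free (so that $N_i$, being projective over $\Z F_i$ for $i \ge 3$, contributes a free $\Z[G_1 \ast G_2]$-summand), would exhibit $\pi_2(X')$ as ${\iota_1}_\#(N_1 \oplus \Z G_1^s) \oplus {\iota_2}_\#(N_2)$, contradicting our assumption on $X'$.

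\emph{Construction for $k = 2$.} With $G_1, G_2$ to be chosen suitably---natural candidates being trefoil groups or finite groups whose integral group rings have non-trivial class groups---my plan is to construct $X$ via the algebraic-to-topological correspondence of \cref{section:Gn-complexes}. I would first exhibit a stably free $\Z G$-module $M$ with $G = G_1 \ast G_2$ which is not of the form $\bigoplus_i {\iota_i}_\#(M_i)$, build an algebraic 2-complex $E$ over $\Z G$ with $\pi_2(E) \cong M$, and then realise $E$ as a finite 2-complex via \cref{prop:realisation-thm}, provided $G$ has the D2 property.

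\emph{Producing the non-induced $M$.} This is the crux. By \cref{prop:existence} and \cref{thm:Bergman}, $M \otimes \F$ is automatically induced over $\F G$ for every field $\F$, so any candidate $M$ must carry a strictly integral obstruction. My plan is to construct $M$ either as an extension of induced $\Z G$-modules which splits over every $\F G$ but fails integrally---exploiting an obstruction in $K_0(\Z G)$ or in the class groups of $\Z G_i$---or, more concretely, as the kernel of a ``mixed'' matrix over $\Z G$ whose entries couple generators of $G_1$ and $G_2$ in a way that precludes any direct-sum decomposition into induced summands. To verify non-inducedness I would assume $M \cong \bigoplus_i {\iota_i}_\#(M_i)$, apply $q_\#$ for suitable characteristic quotients $q : G \twoheadrightarrow \bar{G}_1 \ast \bar{G}_2$ (via \cref{prop:modules-over-quotients}), and compare invariants of both sides to derive a contradiction.

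\emph{Main obstacle.} The principal difficulty will be exhibiting and verifying the non-induced $M$. Because Bergman's theorem forecloses any field-level failure of inducedness, the required obstruction is genuinely integral and invisible at each prime separately---analogous to classical failures of cancellation in the genus of a module. A successful construction will likely demand delicate $K$-theoretic or class-group computations tailored to the specific choice of $G_1, G_2$, combined with explicit matrix-level work in $\Z G$ to produce $M$ concretely as the kernel of a presentation-style map.
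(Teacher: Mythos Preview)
Your proposal does not constitute a proof: the decisive step---constructing a stably free $\Z[G_1 \ast G_2]$-module $M$ that is not induced---is never carried out. You correctly observe that any such $M$ must carry an obstruction invisible over every field, but you only list two vague strategies without executing either. Even granting such an $M$, your realisation step via \cref{prop:realisation-thm} requires $G$ to have the D2 property, which is not known for most candidate groups, and realising an arbitrary stably free module as $\pi_2$ via \cref{prop:syzygies-cd-finite} requires $\cd(G) \le 2$, ruling out finite factors. Your reduction step also contains errors: \cref{prop:free-product-char} requires each free factor to be indecomposable and not infinite cyclic, but a non-trivial free group is either $\Z$ or a free product of copies of $\Z$, so the proposition does not apply; and there is no reason the $N_i$ for $i \ge 3$ in a hypothetical induced decomposition should be projective over $\Z F_i$.

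The paper's argument is entirely different and sidesteps all of this. It takes $G_i = (\Z/p_i)^2$ for distinct primes $p_i$ and works directly for every $k \ge 2$ without reduction. The key input is \cref{prop:def(G*H)}: the free product $G = \ast_{i=1}^k G_i$ admits a presentation $\mathcal{P}$ of deficiency $-1$, one relation fewer than the combined deficiency $-k$ of the standard presentations $\mathcal{P}_i$, and moreover $X_{\mathcal{P}} \vee (k-1)S^2 \simeq X_{\mathcal{P}_1} \vee \cdots \vee X_{\mathcal{P}_k}$. If $\pi_2(X_{\mathcal{P}})$ were induced as $\bigoplus_i {\iota_i}_\#(M_i)$, then \cref{lemma:stable-uniqueness} (via the projections $G \twoheadrightarrow G_i$) forces $M_i \cong_s \pi_2(X_{\mathcal{P}_i})$; since each $\pi_2(X_{\mathcal{P}_i})$ is minimal in $\Omega_3^{G_i}(\Z)$ by Swan, a straightforward rank comparison yields the contradiction $k-1 \le 0$. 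No abstract module is constructed, no D2 property is invoked, and the obstruction is a simple Euler-characteristic mismatch rather than anything $K$-theoretic.
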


In order to prove this, we will need the following method of proving that presentation complexes are homotopy equivalent.
If $\mathcal{P} = \langle x_1, \dots, x_n \mid r_1, \dots, r_m \rangle$, then an \textit{elementary transformation} on $\mathcal{P}$ is an operation that replaces a relator $r_i$ with:
\begin{clist}{(i)}
\item $\omega r_i \omega^{-1}$ for a word $\omega \in F(x_1 \cdots, x_n)$ (\textit{conjugation})
\item $r_i^{-1}$ (\textit{inversion})
\item $r_i r_j$ or $r_j r_i$ for some $j \ne i$ (\textit{left or right multiplication}).
\end{clist}
We say that two group presentations $\mathcal{P}$ and $\mathcal{Q}$ are \textit{$Q$-equivalent} if they are related by a sequence of elementary transformations. If $\mathcal{P}$ and $\mathcal{Q}$ are $Q$-equivalent, then $X_{\mathcal{P}}$ and $X_{\mathcal{Q}}$ are (simple) homotopy equivalent \cite[p20-29]{HMS93}.

We begin by noting the following, which is a generalisation of \cite[Theorem 3]{HLM85}.

\begin{prop} \label{prop:def(G*H)}
Let $k \ge 1$ and let $m_i, n_i \ge 1$ for $i =1, \cdots, k$. Suppose there exists integers $r_i$, $q_i$ such that $(q_i,q_j)=1$ for all $i \ne j$ and, for all $i$, we have: 
\[ r_i^{m_i}-1=n_iq_i, \qquad r_i \equiv 1 \mod n_i , \qquad (m_i,n_i) \ne 1. \]
Then $G = \ast_{i=1}^k (\Z/m_i \times \Z/n_i)$ has a presentation
\[ \mathcal{P} = \langle a_1, b_1, \dots, a_k, b_k \mid a_1^{m_1}, \dots, a_k^{m_k}, a_1b_1a_1^{-1}b_1^{-r_1}, \dots, a_kb_ka_k^{-1}b_k^{-r_k}, b_1^{n_1} \cdots b_k^{n_k} \rangle  \]
of deficiency $-1$. Furthermore, if $\mathcal{P}_i = \langle a,b \mid a^{n_i}, b^{m_i}, [a,b] \rangle$ is the standard presentation for $\Z/m_i \times \Z/n_i$, then $X_{\mathcal{P}} \vee (k-1)S^2 \simeq X_{\mathcal{P}_1} \vee \cdots \vee X_{\mathcal{P}_k}$.
\end{prop}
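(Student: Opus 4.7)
My plan is to prove both claims simultaneously by constructing an explicit $Q$-equivalence between $\mathcal{P}$, augmented by $k-1$ trivial relators, and the disjoint union $\mathcal{P}_1 \sqcup \cdots \sqcup \mathcal{P}_k$. Since $Q$-transformations preserve the presented group and induce (simple) homotopy equivalences of presentation complexes \cite{HMS93}, this will simultaneously identify $\langle \mathcal{P} \rangle$ with $G$ and yield the desired equivalence $X_\mathcal{P} \vee (k-1)S^2 \simeq X_{\mathcal{P}_1} \vee \cdots \vee X_{\mathcal{P}_k}$. The deficiency claim is immediate from counting $2k$ generators against $2k+1$ relators.

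The steps would proceed as follows. First, direct substitution using $r_i \equiv 1 \pmod{n_i}$ and commutativity in $\Z/m_i \times \Z/n_i$ verifies that every relator of $\mathcal{P}$ vanishes in $G$, giving a surjection $\langle \mathcal{P} \rangle \twoheadrightarrow G$. Second, I would derive $b_i^{n_i} = 1$ in $\langle \mathcal{P} \rangle$: iterating $a_i b_i a_i^{-1} = b_i^{r_i}$ and invoking $a_i^{m_i}=1$ gives $b_i = b_i^{r_i^{m_i}}$, so writing $B_i := b_i^{n_i}$ yields $B_i^{q_i} = 1$; combined with the remaining relator $B_1 B_2 \cdots B_k = 1$ and the pairwise coprimality of the $q_i$, this should force each $B_i=1$. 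Third, with each $b_i^{n_i}$ now known to lie in the normal closure of the relators of $\mathcal{P}$, I would carry out the $Q$-transformations explicitly: transform the $k-1$ trivial relators into $b_1^{n_1}, \ldots, b_{k-1}^{n_{k-1}}$; convert $b_1^{n_1} \cdots b_k^{n_k}$ into $b_k^{n_k}$ by left-multiplying by the inverses of the previous relators; and use the free-group identity $\rho_i = [a_i, b_i] \cdot (b_i^{n_i})^{-s_i}$, valid because $r_i = 1 + n_i s_i$, to convert each $\rho_i$ into $[a_i, b_i]$ by multiplication by a power of the relator $b_i^{n_i}$. The terminal presentation is then $\mathcal{P}_1 \sqcup \cdots \sqcup \mathcal{P}_k$.

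The main obstacle is the deduction $B_i=1$ for general $k$. For $k=2$ this is an immediate Bezout argument, generalising \cite[Theorem 3]{HLM85}: $B_2 = B_1^{-1}$ forces $B_1^{q_2} = B_2^{-q_2} = 1$, which combined with $B_1^{q_1} = 1$ and $\gcd(q_1, q_2)=1$ gives $B_1=1$ and hence $B_2=1$. For $k \geq 3$ the $B_i$ need not commute in $\langle \mathcal{P} \rangle$, so direct iteration of Bezout fails; my approach would be to apply the five-term exact sequence to $1 \to N \to \langle \mathcal{P} \rangle \to G \to 1$ with $N = \langle\langle B_1, \ldots, B_k \rangle\rangle$, using that the abelianisations of $\langle \mathcal{P} \rangle$ and $G$ coincide (a Smith normal form computation exploiting pairwise coprimality of the $s_i := (r_i-1)/n_i$, inherited from that of the $q_i$ since $s_i \mid q_i$) to conclude $N = [N, \langle \mathcal{P} \rangle]$; combined with the fact that $N$ is generated by torsion elements of pairwise coprime orders, and with the hypothesis $(m_i,n_i) \ne 1$ controlling the interaction with $H_2(G;\Z)$, this should collapse $N$ to the trivial group. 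An alternative attack would be a one-relator-product analysis of the element $B_1 B_2 \cdots B_k$ inside the free product $\ast_{i} (\Z/m_i \ltimes \Z/(n_i q_i))$.
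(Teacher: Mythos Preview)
Your approach to the homotopy equivalence is exactly the paper's: form $\mathcal{P}_+$ by adjoining the redundant relators $b_1^{n_1},\dots,b_{k-1}^{n_{k-1}}$ (equivalently, add $k-1$ trivial relators and $Q$-transform them into these words, which is legitimate once the $b_i^{n_i}$ are known to lie in the normal closure of the relators of $\mathcal{P}$), then left-multiply $b_1^{n_1}\cdots b_k^{n_k}$ down to $b_k^{n_k}$, and finally use $r_i\equiv 1\pmod{n_i}$ to strip each $a_ib_ia_i^{-1}b_i^{-r_i}$ to $[a_i,b_i]$ by multiplying by powers of $b_i^{n_i}$. That is precisely what the paper does.

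Where you diverge is in trying to actually prove that $\mathcal{P}$ presents $G$. The paper simply omits this, saying it is ``similar to the case $k=2$'' and citing \cite[Theorem~3]{HLM85}. You are right that the Bezout argument for $k=2$ (where $B_1=B_2^{-1}$ forces $\mathrm{ord}(B_1)\mid\gcd(q_1,q_2)=1$) does not transparently generalise, since for $k\ge 3$ the elements $B_i$ need not commute in $\langle\mathcal{P}\rangle$. So you have correctly located the one nontrivial point that the paper leaves to the reader.

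That said, your proposed fix via the five-term sequence is not yet a proof. Matching abelianisations only tells you that the image of $N/[N,\langle\mathcal{P}\rangle]$ in $H_1(\langle\mathcal{P}\rangle)$ vanishes; to conclude $N/[N,\langle\mathcal{P}\rangle]=0$ you would also need surjectivity of $H_2(\langle\mathcal{P}\rangle)\to H_2(G)$, which you have not established, and even $N=[N,\langle\mathcal{P}\rangle]$ does not by itself force $N=1$. The one-relator-product direction is more promising: one is asking for a Freiheitssatz-type statement for the relator $B_1\cdots B_k$ in $\ast_i H_i$ with $H_i$ the metacyclic group $\langle a_i,b_i\mid a_i^{m_i},\,a_ib_ia_i^{-1}b_i^{-r_i}\rangle$, where each $B_i$ has order exactly $q_i$ in $H_i$. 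If you pursue that route, you should look for an argument that exploits the pairwise coprimality of the $q_i$ directly at the level of the free product, rather than after abelianising.
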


The conditions on $m_i, n_i$ are satisfied in the case where $m_i = n_i = p_i$ for distinct primes $p_i$. In particular, this applies to all groups of the form $G = \ast_{i=1}^k (\Z/p_i)^2$.

\begin{proof}
That proof that $\mathcal{P}$ presents $G$ is similar to the case $k=2$ (see \cite[Theorem 3]{HLM85}), as so will be omitted.
Let $\mathcal{P}_+$ denote the presentation $\mathcal{P}$ with additional relations $b_1^{n_1}, \cdots, b_{k-1}^{n_{k-1}}$, so that $X_{\mathcal{P}_+} \simeq X_{\mathcal{P}} \vee (k-1)S^2$. In order to show that $X_{\mathcal{P}} \vee (k-1)S^2 \simeq X_{\mathcal{P}_1} \vee \cdots \vee X_{\mathcal{P}_k}$, it therefore suffices to show that $\mathcal{P}_+$ and $\mathcal{P}_1 \ast \cdots \ast \mathcal{P}_k$ are $Q$-equivalent. To see this, note that we can replace $b_1^{n_1} \cdots b_k^{n_k} \leadsto b_k^{n_k}$ by left-multiplying by the $b_i^{-n_k}$ for $1 \le i < k$. Since $r_i \equiv 1 \mod n_i$, we can then replace $a_ib_ia_i^{-1}b_i^{-r_i} \leadsto [a_i,b_i]$ by successively right-multiplying by $b_i^{n_i}$.
\end{proof}

We say that two $\Z G$-modules $M$ and $M'$ are \textit{stably isomorphic}, written $M \cong_s M'$, if there exists $a,b \ge 0$ such that $M \oplus \Z G^a \cong M' \oplus \Z G^b$.

\begin{lemma} \label{lemma:stable-uniqueness}
For $1 \le i \le k$, let $M_i$, $M_i'$ be finitely generated $\Z G_i$-lattices such that 
\[ {\iota_1}_\#(M_1) \oplus \cdots \oplus {\iota_k}_\#(M_k) \cong {\iota_1}_\#(M_1') \oplus \cdots \oplus {\iota_k}_\#(M_k')\] 
as $\Z[G_1 \ast \cdots \ast G_k]$-modules. Then $M_i \cong_s M_i'$ for all $1 \le i \le k$.
\end{lemma}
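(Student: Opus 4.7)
The natural strategy is to recover information about $M_i$ alone by applying the canonical retraction $f_i : G \twoheadrightarrow G_i$ (which is the identity on $G_i$ and trivial on each $G_j$ for $j \ne i$) to the hypothesized isomorphism. Since $(f_i)_\#$ is functorial, we obtain an isomorphism of $\Z G_i$-modules, and it suffices to compute the effect of $(f_i)_\#$ summand-by-summand.

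A routine change-of-rings computation gives $(f_i)_\# \iota_i_\# M_i \cong M_i$, using $f_i \circ \iota_i = \id_{G_i}$. For $j \ne i$, since $f_i \circ \iota_j$ is trivial, $\Z G_j$ acts on $\Z G_i$ through augmentation, and one computes
\[ (f_i)_\# \iota_j_\# M_j \;=\; \Z G_i \otimes_{\Z G_j} M_j \;\cong\; \Z G_i \otimes_\Z (M_j)_{G_j}, \]
where $(M_j)_{G_j} = \Z \otimes_{\Z G_j} M_j$ are the coinvariants. Setting $A_i = \bigoplus_{j \ne i}(M_j)_{G_j}$ and $A_i' = \bigoplus_{j \ne i}(M_j')_{G_j}$, we arrive at
\[ M_i \,\oplus\, \bigl(\Z G_i \otimes_\Z A_i\bigr) \;\cong\; M_i' \,\oplus\, \bigl(\Z G_i \otimes_\Z A_i'\bigr) \]
as $\Z G_i$-modules.

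The key observation is that $M_i$ and $M_i'$ are $\Z$-free by hypothesis, whereas the finitely generated abelian groups $A_i$, $A_i'$ may have torsion. Writing $A_i \cong \Z^{r_i} \oplus T_i$ and $A_i' \cong \Z^{r_i'} \oplus T_i'$ with $T_i$, $T_i'$ torsion, we get $\Z G_i \otimes_\Z A_i \cong \Z G_i^{r_i} \oplus (\Z G_i \otimes_\Z T_i)$, where the second summand is $\Z$-torsion (being annihilated by the exponent of $T_i$). The $\Z$-torsion submodule of any $\Z G_i$-module is a characteristic $\Z G_i$-submodule, so any $\Z G_i$-isomorphism preserves it and induces an isomorphism on the $\Z$-torsion-free quotients. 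Since $M_i$ and $M_i'$ are untouched by this quotient, we obtain
\[ M_i \oplus \Z G_i^{r_i} \;\cong\; M_i' \oplus \Z G_i^{r_i'}, \]
which is precisely the statement $M_i \cong_s M_i'$.

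The main obstacle is really just identifying the right approach: the tensor formula for $(f_i)_\#$ on induced modules is standard once one writes $\Z G_i$ as a $(\Z G_i, \Z G_j)$-bimodule with right $\Z G_j$-action factoring through $\Z$, and the torsion-separation step is immediate from the structure theorem for finitely generated abelian groups. The slightly unexpected ingredient is that the presence of torsion in $(M_j)_{G_j}$ (which need not be $\Z$-free even when $M_j$ is) is precisely what allows one to cleanly peel off the cross-terms from the $j \ne i$ components.
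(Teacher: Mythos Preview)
Your proof is correct and follows essentially the same approach as the paper: apply the retraction $G \twoheadrightarrow G_i$, identify the cross-terms $(f_i)_\#{\iota_j}_\#(M_j)$ for $j \ne i$ as $\Z G_i \otimes_\Z (M_j)_{G_j} \cong \Z G_i^{r_j} \oplus (\Z G_i \otimes_\Z T_j)$, and then use that $M_i$, $M_i'$ are $\Z$-free to strip off the $\Z$-torsion part and conclude $M_i \oplus \Z G_i^{r} \cong M_i' \oplus \Z G_i^{r'}$. The only cosmetic difference is that the paper phrases the torsion separation as the isomorphism inducing isomorphisms on the torsion and torsion-free pieces, while you phrase it via the characteristic torsion submodule and its quotient; these are the same argument.
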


\begin{proof}
For $1 \le i \le k$, let $q_i : G_1 \ast \cdots \ast G_k \twoheadrightarrow G_i$ be the projection map. By applying $(q_1)_\#$ to the given isomorphism of $\Z[G_1 \ast G_2]$-modules, we get that
\[ M_1 \oplus \bigoplus_{j=2}^k {(q_1 \circ \iota_j)}_\#(M_j) \cong M_1' \oplus \bigoplus_{j=2}^k {(q_1 \circ \iota_j)}_\#(M_j') \]
as $\Z G_1$-modules.
If $j \ne 1$, then $q_1 \circ \iota_j : G_j \to G_1$, $g \mapsto 1$. If $M$ is a finitely generated $\Z G_j$-module, then ${(q_1 \circ \iota_j)}_\#(M) \cong \Z G_1 \otimes_{\Z} (\Z \otimes_{\Z G_j} M)$. If $\Z \otimes_{\Z G_j} M \cong \Z^{r_M} \oplus F_M$ for $F_M$ a finite abelian group and $r_M \ge 0$, then ${(q_1 \circ \iota_j)}_\#(M) \cong \Z G_1^{r_M} \oplus F_M G_1$.

In particular, for some finite abelian groups $F, F'$ and some $r,r' \ge 0$, we have $M_1 \oplus \Z G_1^{r} \oplus F G_1 \cong M_1' \oplus \Z G_1^{r'} \oplus F' G_1$.
Since $M_1, M_1'$ are $\Z G_1$-lattices, this $\Z G_1$-isomorphism must induce isomorphisms $F G_1 \cong F' G_2$ and $M_1 \oplus \Z G_1^{r} \cong M_1' \oplus \Z G_1^{r'}$. Hence $M_1 \cong_s M_1'$ and, by symmetry, we have that $M_i \cong_s M_i'$ for all $1 \le i \le k$.
\end{proof}

\begin{proof}[Proof of \cref{thm:non-existence}]
Let $p_1, \cdots, p_k$ be distinct primes and let $G = \ast_{i=1}^k (\Z/p_i)^2$. By 	\cref{prop:def(G*H)}, $G$ has a presentation $\mathcal{P}$ of deficiency $-1$. 
We claim that $\pi_2(X_{\mathcal{P}})$ is not an induced $\Z[G_1 \ast \cdots \ast G_k]$-module, where $G_i = (\Z/p_i)^2$ for all $i$.

Suppose that $\pi_2(X_{\mathcal{P}}) = {\iota_1}_\#(M_1) \oplus \cdots \oplus {\iota_k}_\#(M_k)$ for $\Z G_i$-modules $M_i$. 
Again by \cref{prop:def(G*H)}, we have that $X_{\mathcal{P}} \vee (k-1)S^2 \simeq X_{\mathcal{P}_1} \vee \cdots \vee X_{\mathcal{P}_k}$ where the $\mathcal{P}_i = \langle a,b \mid a^{p_i}, b^{p_i}, [a,b] \rangle$ are the standard presentations for $G_i$.
Hence, we have:
\[ {\iota_1}_\#(M_1 \oplus \Z G_1^{k-1}) \oplus  \bigoplus_{j = 2}^k {\iota_j}_\#(M_j) \cong \bigoplus_{j = 1}^k {\iota_j}_\#(\pi_2(X_{\mathcal{P}_j})). \]
By \cref{lemma:stable-uniqueness}, this implies that $M_i \cong_s \pi_2(X_{\mathcal{P}_i})$ for all $i$ and so $M_i \in \Omega_3^{G_i}(\Z)$.
	
It follows from \cite[Proposition 2.1]{Sw65}	 that $\pi_2(X_{\mathcal{P}_i}) \in \Omega_3^{G_i}(\Z)$ is minimal and so $M_i \oplus \Z G_i^{r_i} \cong \pi_2(X_{\mathcal{P}_i}) \oplus \Z G_i^{s_i}$ for some integers $r_i \le s_i$. This gives that:
\[ \pi_2(X_{\mathcal{P}}) \oplus \Z G^{s_1 + \cdots + s_k + k -1} \cong \pi_2(X_{\mathcal{P}}) \oplus \Z G^{r_1 + \cdots + r_k}. \]
By \cite[Proposition 2.1]{Jo12a}, $\sum s_i + k -1 = \sum r_i \le \sum s_i$ which is a contradiction.	
\end{proof}

\subsection{Uniqueness of induced module decompositions}
\label{subsection:uniqueness}

We will now turn to the question of uniqueness. The following is an immediate consequence of \cref{cor:Bergman}.	

\begin{prop}[Uniqueness over \text{$\F[G_1 \ast \cdots \ast G_k]$}] \label{prop:uniqueness}
Let $X$ be a finite $2$-complex with $\pi_1(X) \cong G_1 \ast \cdots \ast G_k$.
If $\pi_2(X) \otimes \F \cong {\iota_1}_\#(M_1) \oplus \cdots \oplus {\iota_k}_\#(M_k)$ for $\F G_i$-modules $M_i$ such that $\F G_i \nmid M_i$ 
, then the $M_i$ are unique up to $\F G_i$-module isomorphism.
\end{prop}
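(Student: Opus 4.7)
The plan is to deduce the statement as a direct application of \cref{cor:Bergman}(i), where all the real content has already been absorbed into Bergman's theorem. The only thing to set up is the hypothesis that the ambient module is a finitely generated induced $\F G$-module.

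First I would suppose that we have two decompositions
\[ \pi_2(X) \otimes \F \cong \bigoplus_{i=1}^k {\iota_i}_\#(M_i) \cong \bigoplus_{i=1}^k {\iota_i}_\#(M_i') \]
where each $M_i$ and each $M_i'$ contains no direct summand isomorphic to $\F G_i$. Because $X$ is a finite $2$-complex, $\pi_2(X)$ is a finitely generated $\Z G$-module (it is a submodule of $C_2(\wt X)$, a finitely generated free $\Z G$-module), so $\pi_2(X) \otimes \F$ is a finitely generated $\F G$-module; in particular each $M_i$ and each $M_i'$ is a finitely generated $\F G_i$-module.

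With those hypotheses in place, \cref{cor:Bergman}(i) applies verbatim and yields $M_i \cong M_i'$ as $\F G_i$-modules for each $i$, which is exactly the claimed uniqueness. No further ingredients are needed: existence of the decomposition is provided by the statement itself (or by \cref{prop:existence}), and the no-$\F G_i$-summand hypothesis is precisely what is needed to rule out the free transfer isomorphisms in Bergman's theorem.

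Since the proof is a one-line invocation of a previously stated corollary, there is no substantive obstacle; the only minor care required is checking that the finite generation hypothesis of \cref{thm:Bergman} (inherited by \cref{cor:Bergman}) is met, which follows from finiteness of $X$ as explained above.
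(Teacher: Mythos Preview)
Your approach is exactly the paper's: the proposition is stated as an immediate consequence of \cref{cor:Bergman}(i), with no further argument given. So the overall strategy is correct and matches.

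There is, however, a small gap in your justification of the finite-generation hypothesis. You write that $\pi_2(X)$ is finitely generated over $\Z G$ because it is a submodule of the finitely generated free module $C_2(\wt X)$. This inference is invalid unless $\Z G$ is Noetherian, which for $G = G_1 \ast \cdots \ast G_k$ it typically is not. A correct way to see that $\pi_2(X)\otimes\F$ is finitely generated over $\F G$ is to use the stable equivalence from the proof of \cref{prop:existence}: one has
\[
(\pi_2(X)\otimes\F)\oplus \F G^a \;\cong\; \bigoplus_{j=1}^k {\iota_j}_\#(\pi_2(X_j)\otimes\F)\oplus \F G^b
\]
for suitable finite $2$-complexes $X_j$ with $\pi_1(X_j)\cong G_j$, so $\pi_2(X)\otimes\F$ is a direct summand of a finitely generated $\F G$-module and hence itself finitely generated. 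With that correction, your invocation of \cref{cor:Bergman}(i) goes through verbatim.
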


\begin{thm}[Non-uniqueness over \text{$\Z[G_1 \ast \cdots \ast G_k]$}] \label{thm:non-uniqueness}
For all $k \ge 2$, there exists finite $2$-complexes $X_i$, $Y_i$ with $\pi_1(X_i) \cong \pi_1(Y_i) \cong G_i$ for $1 \le i \le k$ such that 
\[ \pi_2(X_1 \vee \cdots \vee X_k) \cong \pi_2(Y_1 \vee \cdots \vee Y_k) \] 
but, for all $i$, $\Z G_i \nmid \pi_2(X_i), \, \pi_2(Y_i)$ and $\pi_2(X_i) \not \cong \pi_2(Y_i)$ are not $\Aut(G_i)$-isomorphic.
\end{thm}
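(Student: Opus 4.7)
My strategy is to exploit Dunwoody--Metzler non-cancellation for finite $2$-complexes over a finite fundamental group. Fix a finite group $H$ admitting finite $2$-complexes $X, Y$ with $\pi_1(X) \cong \pi_1(Y) \cong H$, $X \vee S^2 \simeq Y \vee S^2$, but $X \not\simeq Y$; Metzler's construction for $H = \Z/n \times \Z/n$ with suitable $n$ provides such examples. By Browning's theorem \cite{Br78}, such non-cancellation occurs only at minimal Euler characteristic, and at the minimal level neither $\pi_2(X)$ nor $\pi_2(Y)$ can contain $\Z H$ as a direct summand (otherwise an $S^2$ could be split off, contradicting minimality). I further arrange that $\pi_2(X) \not\cong \pi_2(Y)$ up to $\Aut(H)$-isomorphism; this can be achieved either by careful selection within Metzler's families or by combining with Lustig-type refinements (cf.\ \cref{thm:lustig}).

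Set $G_i := H$ and $X_i := X$, $Y_i := Y$ for each $1 \le i \le k$. The individual requirements $\Z G_i \nmid \pi_2(X_i), \pi_2(Y_i)$ and $\pi_2(X_i) \not\cong_{\Aut(G_i)} \pi_2(Y_i)$ then follow immediately from the properties of $X, Y$. Writing $M := \pi_2(X)$, $N := \pi_2(Y)$, and $G := H^{*k}$, \cref{cor:pi_2-of-wedge} reduces the remaining claim
\[ \pi_2(X_1 \vee \cdots \vee X_k) \cong \pi_2(Y_1 \vee \cdots \vee Y_k) \]
to proving $\bigoplus_{i=1}^k \iota_{i\#}(M) \cong \bigoplus_{i=1}^k \iota_{i\#}(N)$ as $\Z G$-modules. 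Applying each $\iota_{i\#}$ to the $\Z H$-isomorphism $M \oplus \Z H \cong N \oplus \Z H$ (which comes from $X \vee S^2 \simeq Y \vee S^2$) and taking the direct sum yields the \emph{stable} isomorphism
\[ \bigoplus_{i=1}^k \iota_{i\#}(M) \oplus \Z G^{k} \cong \bigoplus_{i=1}^k \iota_{i\#}(N) \oplus \Z G^{k}. \]

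The main obstacle is cancelling the $\Z G^k$ summands to upgrade this to a genuine $\Z G$-isomorphism. Since $G = H^{*k}$ is virtually free but not finite, Browning's cancellation does not apply directly. I would approach this along one of two routes: (a) proving an analogue of Browning's cancellation theorem for finite $(G,2)$-complexes with virtually free fundamental group above the minimal Euler characteristic, which applies here since $\chi(\bigvee X_i) = k \chi(X) - (k-1) > \chi_{\min}(G, 2)$ for $k \ge 2$; or (b) constructing the required $\Z G$-isomorphism explicitly from a chain-level refinement of the homotopy equivalence $X \vee S^2 \simeq Y \vee S^2$, tracking the cancellation of the $\Z G^k$-summands directly. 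The cancellation step is the central technical task of the proof; the rest of the argument is routine once it is established, and the non-Aut-isomorphism statement is preserved throughout since the inclusions $\iota_{i\#}$ detect distinctness via the characteristic quotients used in \cref{prop:modules-over-quotients} and \cref{thm:Bergman}.
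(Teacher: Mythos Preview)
Your proposal has a genuine gap: the cancellation step is not established, and neither route you sketch is carried out. Route (a) --- a Browning-type cancellation theorem for virtually free $G$ above minimal Euler characteristic --- is not a known result, and even its premise $\chi(\bigvee X_i) > \chi_{\min}(H^{*k})$ is unclear when all factors are the \emph{same} group $H$: the efficient-presentation trick in \cref{prop:def(G*H)} requires the coprimality condition $(q_i,q_j)=1$, which fails for equal prime-power factors. Route (b) is too vague to assess. Since you explicitly identify cancellation as ``the central technical task'' and leave it open, what you have is a plan rather than a proof.

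The paper's argument is structurally different and avoids cancellation altogether. It takes the $G_i = (\Z/p_i)^3$ for \emph{distinct} primes $p_i \equiv 1 \pmod 4$, so that by \cref{lemma:abelian-non-cancellation} each $G_i$ admits two minimal presentations $\mathcal{P}^{(i)}$, $\mathcal{Q}^{(i)}$ with $\pi_2(X_{\mathcal{P}^{(i)}})$ and $\pi_2(X_{\mathcal{Q}^{(i)}})$ not $\Aut(G_i)$-isomorphic. Minimality (via Swan) gives $\Z G_i \nmid \pi_2$. The heart of the proof is \cref{prop:abelian-collapse}: using the low-deficiency presentation $\mathcal{P}_{r,s}$ of \cref{prop:def(G*H)} as a common intermediary, one exhibits explicit $Q$-equivalences showing that
\[
X_{\mathcal{P}^{(1)}} \vee \cdots \vee X_{\mathcal{P}^{(k)}} \simeq X_{\mathcal{Q}^{(1)}} \vee \cdots \vee X_{\mathcal{Q}^{(k)}}
\]
are actually \emph{homotopy equivalent}, from which the $\pi_2$ isomorphism follows immediately. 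The distinctness of the primes $p_i$ is what makes this work, and is precisely why the paper does not take all $G_i$ equal as you do.
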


\begin{remark}
Note that this implies \cref{thm:main-free-product} (ii) since it implies that:
\[ \pi_2(X_1 \vee \cdots \vee X_k) \cong  {\iota_1}_\#(\pi_2(X_1)) \oplus \cdots \oplus {\iota_k}_\#(\pi_2(X_k)) \cong {\iota_1}_\#(\pi_2(Y_1)) \oplus \cdots \oplus {\iota_k}_\#(\pi_2(Y_k)).\]	
\end{remark}

In order to prove this, we will begin by proving the following.
We note that this holds for a larger class of abelian groups than elementary abelian $p$-groups.

\begin{prop} \label{prop:abelian-collapse}
Let $k \ge 2$ and let $p_i$ be distinct primes and $n_i \ge 1$ for $i =1, \cdots, k$. If $\mathcal{P}_i$, $\mathcal{P}_i'$ are two presentations for $G_i = (\Z/p_i)^{n_i}$ with the same deficiency, then $X_{\mathcal{P}_1} \vee \cdots \vee X_{\mathcal{P}_k} \simeq X_{\mathcal{P}_1'} \vee \cdots \vee X_{\mathcal{P}_k'}$.
\end{prop}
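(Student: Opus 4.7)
The plan is to combine Browning's theorem for each finite factor $G_i$ with the wedge structure of $\pi_2$, reducing the question to a cancellation problem over the local rings $\F_{p_i} G_i$.

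Since $\mathcal{P}_i$ and $\mathcal{P}_i'$ have equal deficiency, $\chi(X_{\mathcal{P}_i}) = \chi(X_{\mathcal{P}_i'})$, and regardless of whether this value is minimal one has $\chi(X_{\mathcal{P}_i} \vee S^2) > \chi_{\min}(G_i)$ strictly. So Browning's theorem applied to the finite group $G_i = (\Z/p_i)^{n_i}$ gives $X_{\mathcal{P}_i} \vee S^2 \simeq X_{\mathcal{P}_i'} \vee S^2$ for each $i$. Substituting one factor at a time inside the wedge while shuffling the single $S^2$ between summands produces
\[ X_{\mathcal{P}_1} \vee \cdots \vee X_{\mathcal{P}_k} \vee S^2 \simeq X_{\mathcal{P}_1'} \vee \cdots \vee X_{\mathcal{P}_k'} \vee S^2. \]

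Set $X = X_{\mathcal{P}_1} \vee \cdots \vee X_{\mathcal{P}_k}$ and $X' = X_{\mathcal{P}_1'} \vee \cdots \vee X_{\mathcal{P}_k'}$; the remaining task is to cancel the extra $S^2$. Passing to $\pi_2$, we get $\pi_2(X) \oplus \Z G \cong \pi_2(X') \oplus \Z G$ as $\Z G$-modules with $G = G_1 * \cdots * G_k$, which I would study one prime at a time. By \cref{cor:pi_2-of-wedge}, $\pi_2(X) \otimes \F_{p_i}$ splits as a direct sum of induced modules $\iota_{j\#}(\pi_2(X_{\mathcal{P}_j}) \otimes \F_{p_i})$. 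For $j \ne i$, since $|G_j|$ is invertible in $\F_{p_i}$, Maschke's theorem makes $\F_{p_i} G_j$ semisimple and $\pi_2(X_{\mathcal{P}_j}) \otimes \F_{p_i}$ is free over $\F_{p_i} G_j$ of some rank $r_j$ determined by $\chi$; consequently ${\iota_j}_\#(\pi_2(X_{\mathcal{P}_j}) \otimes \F_{p_i})$ is a free $\F_{p_i} G$-module of rank $r_j$. Absorbing these free summands, together with the free $\F_{p_i} G$-summand from $\Z G$, into the $i$-th factor reduces the equation to one over the local ring $\F_{p_i} G_i$ (local because $G_i$ is a finite $p_i$-group), where cancellation of free summands is automatic. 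This yields $\pi_2(X_{\mathcal{P}_i}) \otimes \F_{p_i} \cong \pi_2(X_{\mathcal{P}_i'}) \otimes \F_{p_i}$ as $\F_{p_i} G_i$-modules for every $i$.

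The hardest step will be promoting this mod-$p$ information, together with the stable $\Z G_i$-isomorphism implied by the matched Euler characteristics, back to a genuine homotopy equivalence $X \simeq X'$. I would proceed via \cref{prop:realisation-thm} at the level of cellular chain complexes: Bergman-style uniqueness (\cref{cor:Bergman}) applied prime-by-prime should constrain the induced chain complexes of $\wt{X}$ and $\wt{X}'$ tightly enough to force a chain-homotopy equivalence over $\Z G$, which lifts to the desired homotopy equivalence provided $G = G_1 * \cdots * G_k$ satisfies the D2 property (inheritable from each finite $G_i$ via standard free-product arguments). Alternatively one can attempt to realise the algebraic cancellation by an explicit sequence of $Q$-transformations of presentations in the spirit of \cref{prop:def(G*H)}.
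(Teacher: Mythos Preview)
Your reduction to $X \vee S^2 \simeq X' \vee S^2$ is fine, but the cancellation step has a genuine gap and, in fact, cannot work along the lines you sketch.

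First, the mod-$p_i$ information you extract is vacuous. As the paper itself remarks at the end of \cref{subsection:problems}, for any finite group $G_i$ and any field $\F$ one has $\pi_2(X_{\mathcal{P}_i}) \otimes \F \cong I_{\F}(G_i) \oplus \F G_i^{\chi(X_{\mathcal{P}_i})-1}$, so $\pi_2(X_{\mathcal{P}_i}) \otimes \F_{p_i}$ is already determined by $\chi(X_{\mathcal{P}_i})$ alone and carries no information beyond the matched deficiencies you started with. Your Bergman/prime-by-prime argument therefore cannot constrain the integral chain complexes. Worse, the very proposition you are trying to prove is used in the paper to establish \cref{thm:non-uniqueness}: there exist $X_i$, $Y_i$ with $\pi_2(X_i) \not\cong \pi_2(Y_i)$ over $\Z G_i$ yet $\pi_2(X_1 \vee \cdots \vee X_k) \cong \pi_2(Y_1 \vee \cdots \vee Y_k)$ over $\Z G$. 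So no argument that proceeds by recovering the factors $\pi_2(X_{\mathcal{P}_i})$ from $\pi_2(X)$ can succeed here. Finally, even granting $\pi_2(X) \cong \pi_2(X')$, promoting this to $X \simeq X'$ would require both the D2 property for $G = G_1 \ast \cdots \ast G_k$ and bijectivity of $\pi_2 : \Alg(G,2) \to \Omega_3(\Z)$, neither of which you have established (and the latter is false in general for groups containing finite subgroups).

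The paper's proof is completely different and constructive. Using Browning's classification it reduces to showing $X_{\mathcal{P}_r^{(1)}} \vee X_{\mathcal{P}_s^{(2)}} \simeq X_{\mathcal{P}_1^{(1)}} \vee X_{\mathcal{P}_1^{(2)}}$ for specific model presentations $\mathcal{P}_r^{(i)}$. The key idea, in the spirit of \cref{prop:def(G*H)}, is that coprimality of the $p_i$ allows one to write down an explicit presentation $\mathcal{P}_{r,s}$ of $G_1 \ast G_2$ with one fewer relator than $\mathcal{P}_r^{(1)} \ast \mathcal{P}_s^{(2)}$, and to exhibit explicit $Q$-equivalences giving $X_{\mathcal{P}_{r,s}} \vee S^2 \simeq X_{\mathcal{P}_r^{(1)}} \vee X_{\mathcal{P}_s^{(2)}}$ and $X_{\mathcal{P}_{r,s}} \vee S^2 \simeq X_{\mathcal{P}_{1,s}} \vee S^2$. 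The extra $S^2$ is absorbed by dropping down to this efficient presentation rather than by any module-theoretic cancellation. Your final sentence (``an explicit sequence of $Q$-transformations \ldots\ in the spirit of \cref{prop:def(G*H)}'') is exactly the route the paper takes, but it is the entire content of the proof, not an afterthought.
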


\begin{proof}
For ease of notation, we will let $k=2$. The general case is analogous. Let:
\[ \mathcal{P}_r^{(i)} = \langle a_1, \cdots, a_{n_i} \mid a_1^{p_i}, \cdots, a_{n_i}^{p_i}, [a_1^r,a_2], \{ [a_i,a_j] : i < j, (i,j) \ne (1,2) \} \rangle\]
for $r \in \Z$ with $(r,p_i)=1$. This is a presentation for $G_i$ and, since the homotopy type of $\mathcal{P}_r^{(i)}$ can be shown to depend only on $r \mod p_i$, we can take $r \in (\Z/p_i)^\times$.

It was shown by Browning \cite{Br79} (see also \cite[Proposition 9.2]{GL91}) that, if $\mathcal{P}$ is a presentation for $(\Z/p_i)^{n_i}$, then $X_{\mathcal{P}} \simeq X_{\mathcal{P}_r^{(i)}} \vee \ell S^2$ for some $r \in (\Z/p_i)^\times$, $\ell \ge 0$.
It suffices to show that $X_{\mathcal{P}_{r}^{(1)}} \vee X_{\mathcal{P}_{s}^{(2)}} \simeq X_{\mathcal{P}_{1}^{(1)}} \vee X_{\mathcal{P}_{1}^{(2)}}$ for all $r \in (\Z/p_1)^\times$, $s \in (\Z/p_2)^\times$.

As in \cref{prop:def(G*H)}, there exists integers $r_i$, $q_i$ such that $(q_i,q_j)=1$ for all $i \ne j$ and such that $r_i^{p_i}-1=p_iq_i$ and $r_i \equiv 1 \mod p_i$ for all $i$.
Let $r,s$ be integers such that $(r,p_1)=1$ and $(s,p_2)=1$. If $(rq_1,sq_2)=1$ then, by the same argument as given in \cref{prop:def(G*H)}, $G = G_1 \ast G_2$ has a presentation:
\begin{align*} \mathcal{P}_{r,s} = &\, \langle a_1, \dots, a_{n_1},  b_1, \cdots, b_{n_2} \mid \{a_i^{p_1}\}_{i=2}^{n_1}, \{b_i^{p_2}\}_{i=2}^{n_2}, a_1^{p_1} \cdot b_1^{p_2}, \\ 
& a_2(a_1^r)a_2^{-1}(a_1^r)^{-r_1},  b_2(b_1^s)b_2^{-1}(b_1^s)^{-r_2}, \{[a_i,a_j], [b_i,b_j] : i < j, (i,j) \ne (1,2) \} \rangle.
\end{align*}
This form is general for all $r, s$ since, by Dirichlet's theorem on arithmetic progressions, there exists $r',s'$ such that $r' \equiv r \mod p_1$, $s '\equiv s \mod p_2$ and $(r'q_1,s'q_2)=1$.

Let $(\mathcal{P}_{r,s})_+$ denote the presentation $\mathcal{P}_{r,s}$ with the additional relation $a_1^{p_1}$.
In $(\mathcal{P}_{r,s})_+$, we can replace $a_1^{p_1} \cdot b_1^{p_2} \leadsto b_1^{p_2}$ by left multiplying with $a_1^{-p_1}$, then replace $a_2(a_1^r)a_2^{-1}(a_1^r)^{-r_1} \leadsto [a_2, a_1^r]$ by right multiplying with $a_1^{r_1-1}$ (which works since $r_i \equiv 1 \mod p_i$), and similarly $b_2(b_1^s)b_2^{-1}(b_1^s)^{-r_2} \leadsto [b_2, b_1^s]$. This implies that $(\mathcal{P}_{r,s})_+$ and $\mathcal{P}_{r}^{(1)} \ast \mathcal{P}_{s}^{(2)}$ are $Q$-equivalent and so we have:
\[X_{\mathcal{P}_{r,s}} \vee S^2 \simeq X_{(\mathcal{P}_{r,s})_+} \simeq X_{\mathcal{P}_{r}^{(1)}} \vee X_{\mathcal{P}_{s}^{(2)}}.\]

Note that $\mathcal{P}_{1,s}$ differs from $\mathcal{P}_{r,s}$ by changing $a_2a_1a_2^{-1}a_1^{-r_1} \leadsto a_2(a_1^r)a_2^{-1}(a_1^r)^{-r_1}$. Since both relations hold in $G$, we can add $a_2(a_1^r)a_2^{-1}(a_1^r)^{-r_1}$ to $\mathcal{P}_{1,s}$ and add $a_2a_1a_2^{-1}a_1^{-r_1}$ to $\mathcal{P}_{r,s}$ to get that $X_{\mathcal{P}_{1,s}} \vee S^2 \simeq X_{\mathcal{P}_{r,s}} \vee S^2$. By symmetry, we also have that $X_{\mathcal{P}_{r,1}} \vee S^2 \simeq X_{\mathcal{P}_{r,s}} \vee S^2$ and so $X_{\mathcal{P}_{r}^{(1)}} \vee X_{\mathcal{P}_{s}^{(2)}} \simeq X_{\mathcal{P}_{1}^{(1)}} \vee X_{\mathcal{P}_{1}^{(2)}}$.
\end{proof}

The following can be found in \cite[Theorem 1.2 (3)(iv)]{Li93}. This can also be deduced by combining the earlier work \cite[Proposition 9]{SD79} with \cite[Theorem 1.7]{Br79}.

\begin{lemma} \label{lemma:abelian-non-cancellation}
Let $G = (\Z/p)^n$ for $p$ prime and $n \ge 1$. Let $\delta(G)$ denote the number of $\Aut(G)$-isomorphism classes of modules $\pi_2(X_{\mathcal{P}})$ for $\mathcal{P}$ a presentation with $\Def(\mathcal{P}) = \Def(G)$. If $p=2$, then $\delta(G) = 1$ and, if $p$ is odd, then:
\[ \delta(G) = \begin{cases} (\frac{p-1}{2},n-1), & \text{if $n$ is even} \\
 (\frac{p-1}{2}, \frac{n-1}{2}), & \text{if $n$ is odd}.
 \end{cases}
 \]
\end{lemma}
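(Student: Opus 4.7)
The result is cited as appearing in \cite[Theorem 1.2]{Li93}; my plan is to outline the alternative derivation suggested in the remark, combining \cite[Proposition 9]{SD79} with \cite[Theorem 1.7]{Br79}. First I would invoke Browning's theorem: for a finite group $G$, any two finite $2$-complexes $X_i = X_{\mathcal{P}_i}$ with $\Def(\mathcal{P}_i) = \Def(G)$ are homotopy equivalent if and only if $\pi_2(X_1) \cong \pi_2(X_2)$ as $\Z G$-modules, so
\[ \delta(G) = |\{ \text{minimal } M \in \Omega_3^G(\Z) \}/{\cong_{\Z G}\text{-}\Aut(G)}|. \]

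Next I would apply the $K$-theoretic reformulation (the substance of \cite[Proposition 9]{SD79}): the set of minimal modules in $\Omega_3^G(\Z)$ up to $\Z G$-isomorphism is a torsor over the Swan subgroup $T(\Z G) \subseteq \wt K_0(\Z G)$, and the residual action of $\Aut(G)$ on this torsor is the action on $T(\Z G)$ induced by precomposing the Swan map $S : (\Z/|G|)^\times \to \wt K_0(\Z G)$ with the determinant of the $\Aut(G)$-action on $\Z G/\Sigma_G$. In particular,
\[ \delta(G) = |T(\Z G)/\Aut(G)|. \]

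Specialising to $G = (\Z/p)^n$: Swan's computation gives that $S$ has kernel $\{\pm 1\}$ and image $T(\Z G)$, hence $T(\Z G) \cong (\Z/p)^\times/\{\pm 1\}$. When $p = 2$ this is trivial so $\delta(G) = 1$, recovering the first case. When $p$ is odd, $T(\Z G)$ is cyclic of order $(p-1)/2$, and the action of $\Aut(G) = \GL_n(\F_p)$ factors through $\det : \GL_n(\F_p) \twoheadrightarrow \F_p^\times$ composed with a power map $\F_p^\times \to \F_p^\times/\{\pm 1\}$. The essential computation is identifying the exponent $\epsilon$ of this power map: tracing the Sieradski-Dyer formula through an explicit minimal free resolution of $\Z$ over $\Z[(\Z/p)^n]$ (whose Koszul-type structure introduces a factor of $\binom{n}{k}$ at each degree) gives $\epsilon = n-1$ when $n$ is even and $\epsilon = (n-1)/2$ when $n$ is odd, the halving in the odd case arising because $-\id$ then becomes realisable by a scalar matrix of determinant $(-1)^n = -1$ and folds two orbits into one.

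Finally, I would count: the number of orbits of the subgroup $(\F_p^\times)^\epsilon$ acting by multiplication on the cyclic group $(\Z/p)^\times/\{\pm 1\}$ of order $(p-1)/2$ is $\gcd((p-1)/2,\epsilon)$, yielding the two stated formulae. The main obstacle will be the explicit identification of $\epsilon$ in the penultimate step, which requires carefully matching the $\GL_n(\F_p)$-action on the Swan module $(\det(\theta), \Sigma_G)$ with its image under the augmentation to the abelianization through a minimal resolution; the parity discrepancy between even and odd $n$ is subtle and is precisely the point at which one must distinguish whether $-\id$ admits a square root inside the image of $\det$.
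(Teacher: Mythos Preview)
The paper does not give a proof; the lemma is stated with a citation to \cite[Theorem 1.2 (3)(iv)]{Li93} and the remark that it can alternatively be deduced from \cite[Proposition 9]{SD79} together with \cite[Theorem 1.7]{Br79}. Your proposal attempts to flesh out that alternative route, and the overall architecture---reduce to classifying minimal modules in $\Omega_3^G(\Z)$ up to $\Aut(G)$, then compute the classifying invariant for $(\Z/p)^n$---is correct in spirit.

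However, your middle step conflates two different objects. The Sieradski--Dyer invariant of \cite{SD79} is a \emph{bias}, valued in a quotient of $(\Z/e)^\times$ where $e$ is the exponent of $G$, defined combinatorially from a comparison of resolutions; it is not the Swan subgroup $T(\Z G) \subseteq \widetilde{K}_0(\Z G)$, and the minimal modules in $\Omega_3^G(\Z)$ do not in general form a torsor over $T(\Z G)$. Your specific claims are also internally inconsistent: for $G = (\Z/p)^n$ the Swan map has domain $(\Z/p^n)^\times$, not $(\Z/p)^\times$, so asserting that its kernel is $\{\pm 1\}$ while simultaneously claiming $|T(\Z G)| = (p-1)/2$ cannot both hold once $n \ge 2$. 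What \cite{SD79} actually gives you is the bias group $(\Z/p)^\times$ modulo the subgroup generated by $-1$ and the determinants of chain automorphisms lifting elements of $\GL_n(\F_p)$; the stated formula then follows by computing that subgroup directly. The even/odd dichotomy arises not from whether $-\id$ admits a square root in the image of $\det$, but from how the determinant of the induced chain map in degree two of the Koszul-type minimal resolution depends on $n$; your explanation of the halving is not the actual mechanism and would need to be replaced by that computation.
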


\begin{proof}[Proof of \cref{thm:non-uniqueness}]
Let $k \ge 2$ and, for $i = 1, \cdots, k$, let $p_i$ be distinct primes with $p_i \equiv 1 \mod 4$ and let $G_i = (\Z/p_i)^3$. By \cref{lemma:abelian-non-cancellation}, we have that $\delta(G_i) = 2$ and so there exists presentations $\mathcal{P}^{(i)}$, $\mathcal{Q}^{(i)}$ for $G_i$ such that $\Def(\mathcal{P}^{(i)}) = \Def(\mathcal{Q}^{(i)}) = \Def(G)$ and $\pi_2(X_{\mathcal{P}^{(i)}}) \not \cong \pi_2(X_{\mathcal{Q}^{(i)}})$ are not $\Aut(G_i)$-isomorphic.
	
Similarly to the proof of \cref{thm:non-existence}, $\pi_2(X_{\mathcal{P}^{(i)}}), \pi_2(X_{\mathcal{Q}^{(i)}}) \in \Omega_3^{G_i}(\Z)$ are minimal by \cite[Proposition 2.1]{Sw65}. This implies that $\Z G_i \nmid \pi_2(X_{\mathcal{P}^{(i)}}), \, \pi_2(X_{\mathcal{Q}^{(i)}})$ for all $i$.
By \cref{prop:abelian-collapse}, we have that 
\[ X_{\mathcal{P}^{(1)}} \vee \cdots \vee X_{\mathcal{P}^{(k)}} \simeq X_{\mathcal{Q}^{(1)}} \vee \cdots \vee X_{\mathcal{Q}^{(k)}}\] 
and so $\pi_2(X_{\mathcal{P}^{(1)}} \vee \cdots \vee X_{\mathcal{P}^{(k)}}) \cong \pi_2(X_{\mathcal{Q}^{(1)}} \vee \cdots \vee X_{\mathcal{Q}^{(k)}})$, as required.	
\end{proof}

\section{The unstable classification of smooth 4-manifolds}
\label{section:4-manifolds}

The aim of this section will be to discuss applications of stably free $\Z G$-modules to the unstable classification of smooth 4-manifolds. 
Whilst we will restrict our attention to 4-manifolds, it also possible to use the examples in \cref{thm:main} for $n \ge 3$ to study the unstable classification of $2n$-manifolds. For brevity, we will not discuss this here.

All manifolds will be assumed to be smooth and connected but not necessarily closed. We will let $\cong$ denote homeomorphism and let $\dcong$ denote diffeomorphism. 

\subsection{Boundary of thickenings construction} \label{ss:doubles}

Let $X$ be a finite $2$-complex. By \cite[Statement 7.2]{Co73}, $X$ is simple homotopy equivalent to a finite simplicial $2$-complex $X'$. By a general position argument, there exists an embedding $i : X' \hookrightarrow \R^5$ and a smooth regular neighbourhood $N(i)$ of this embedding which is unique up to diffeomorphism (see, for example, \cite[Section II]{KS84}).
We define $M(X) := \partial N(i)$, which is a closed smooth 4-manifold.
This depends a priori on the choice of $X'$ and embedding $i$, but we will omit these choices from the notation. For convenience, we will refer to $M(X)$ as a \textit{model} when we have fixed some choices of $X'$ and $i$ to obtain a well-defined manifold.

Recall that two closed smooth $4$-manifolds $M_1$, $M_2$ are \textit{smoothly $s$-cobordant}, written $\cong_{\sCob}$, if there exists a smooth $5$-manifold with boundary $W$ such that $\partial W \dcong M_1 \sqcup M_2$ and, for $i=1,2$, the induced inclusion maps 
\[ \iota_i : M_i \hookrightarrow \partial W \hookrightarrow W
\] 
are simple homotopy equivalences.
If two closed smooth 4-manifolds are smoothly $s$-cobordant, then they are simple homotopy equivalent and also stably diffeomorphic (see \cite[Theorem 3.4]{KPR22}).

The following is implicit in \cite{Wa66} (see also \cite[p15]{KS84} and \cite[Proposition 5 \& 6]{BC+21}). This gives a sense in which the construction $X \mapsto M(X)$ is well-defined.

\begin{prop} \label{prop:doubles-well-def}
Let $X$ and $Y$ be finite $2$-complexes such that $X \simeq_s Y$. Then, for any models $M(X)$ and $M(Y)$, we have $M(X) \cong_{\sCob} M(Y)$. In particular, they are stably diffeomorphic.
\end{prop}

The following special case will be useful later on.

\begin{lemma} \label{lemma:M-stabilised}
Let $X$ be a finite $2$-complex. Then there exists models $M(X)$ and $M(X \vee S^2)$ such that $M(X \vee S^2) \dcong M(X) \# (S^2 \times S^2)$.
\end{lemma}

\begin{proof}
Let $X'$ be a finite simplicial 2-complex such that $X \simeq_s X'$, let $i : X \hookrightarrow \R^5$, let $N(i)$ be a smooth regular neighbourhood of $i$ and let $M(X) = \partial N(i)$.

We have $X \vee S^2 \simeq_s X' \vee \Delta$ where $\Delta$ is a triangle and $X'$ and $\Delta$ are wedged at a $0$-simplex so that $X' \vee \Delta$ is a finite simplicial 2-complex. By embedding $\Delta$ in a sufficiently small neighbourhood of the wedge point, we can extend $i$ to an embedding $i_+ : X' \vee \Delta \hookrightarrow \R^5$ so that $N(i_+)=N(i) \natural (S^2 \times D^3)$ is a smooth regular neighbourhood of $i_+$ where $\natural$ denotes the boundary connected sum. We then take $M(X \vee S^2) = \partial N(i_+)$ and so $M(X \vee S^2) \dcong \partial N(i) \# \partial (S^2 \times D^3) \dcong M(X) \# (S^2 \times S^2)$.
\end{proof}

\subsection{Proof of \cref{thm:S+S*-intro}} \label{ss:proof-manifolds}

The aim of this section will be to prove the following theorem from the introduction, which we restate here for convenience.

\begingroup
\renewcommand\thethm{\ref{thm:S+S*-intro}}
\begin{thm}
Let $G$ be a finitely presented group such that $\gd(G)=2$ and suppose there exists a stably free $\Z G$-module $S$ of rank $k$ which is geometrically realisable and such that $S \oplus S^*$ is not a free $\Z G$-module.
Then both $\M^{\Diff}(G)$ and $\M(G)$ fail cancellation at level $k$. 
\end{thm}
\endgroup

We will begin by recalling the basic algebraic topology of $M(X)$ for $X$ a finite $2$-complex.

\begin{lemma} \label{lemma:pi_2-of-double}
Let $X$ be a finite $2$-complex with $\pi_1(X) \cong G$. Then $M(X)$ is a closed smooth $4$-manifold such that $\pi_1(M(X)) \cong G$ and there is an isomorphism of $\Z G$-modules:
\[ \pi_2(M(X)) \cong H^2(X;\Z G) \oplus H_2(X;\Z G). \]
\end{lemma}

\begin{proof}
This follows from the argument given in \cite[Section II]{KS84} in the case where $G$ is finite. The general case was proven in \cite[Theorem 4.2]{Ha09} (see also \cite[Lemma 5.7]{HH19}).
\end{proof}

\begin{lemma} \label{lemma:pi_2-cd=2}
Let $G$ be a finitely presented group such that $\cd(G)=2$ and let $X$ be a finite $2$-complex with $\pi_1(X) \cong G$. Then there are isomorphisms of $\Z G$-modules: 
\[ H_2(X ; \Z G) \cong \pi_2(X), \quad H^2(X;\Z G) \cong \pi_2(X)^* \oplus H^2(G;\Z G).\] 
Furthermore, $\pi_2(X)$ and $\pi_2(X)^*$ are stably free $\Z G$-modules.
\end{lemma}

\begin{proof}
The first part follows from the fact that $H_2(X;\Z G) \cong H_2(\wt X) \cong \pi_2(X)$ (see, for example, \cite[p81]{HMS93}), and holds for any $G$ finitely presented. Since $\cd(G)=2$, this is stably free by \cite[Lemma 5.4]{HH19}.
By the universal coefficient spectral sequence (see \cite[p11]{HH19}) applied to $X$, there is an exact sequence:
\[ 0 \to H^2(G;\Z G) \to H^2(X;\Z G) \to \pi_2(X)^* \to H^3(G;\Z G) \to 0. \]
Since $\cd(G)=2$, we have $H^3(G;\Z G)=0$. The dual of a stably free module is stably free (the proof coincides with that of \cref{prop:dual-of-proj} (i)) and so $\pi_2(X)^*$ is stably free. This implies it is projective and so the exact sequence splits and so we obtain the required isomorphism of $\Z G$-modules:
\[ H^2(X;\Z G) \cong \pi_2(X)^* \oplus H^2(G;\Z G). \qedhere \]
\end{proof}

\begin{proof}[Proof of \cref{thm:S+S*-intro}]
By assumption, there exists a finite 2-complex $X$ such that $\pi_1(X) \cong G$ and $\pi_2(X) \cong S$. Since $G$ is finitely presented with $\gd(G)=2$, there exists a finite aspherical 2-complex $Y_0$. Let $k$ be the rank of $S$ as a stably free $\Z G$-module and let $Y = Y_0 \vee k S^2$. 
By \cref{lemma:M-stabilised}, there exists models $M(Y)$ and $M(Y_0)$ such that $M(Y) \dcong M(Y_0) \# k(S^2 \times S^2)$. Fix a model $M(X)$. By \cite[Theorem 13]{Wh39}, there exists $r \ge 0$ such that $X \vee rS^2 \simeq_s Y \vee rS^2$ and so, by \cref{prop:doubles-well-def}, we have that $M(X) \# t(S^2 \times S^2) \dcong M(Y) \# t(S^2 \times S^2)$ for some $t \ge r$. 

We will now prove that $M(X) \not \simeq M(Y)$. First note that, by combining \cref{lemma:pi_2-of-double,lemma:pi_2-cd=2}, we obtain isomorphisms of $\Z G$-modules:
\[ \pi_2(M(X)) \cong S \oplus S^* \oplus H^2(G;\Z G), \quad \pi_2(M(Y)) \cong \Z G^{2k} \oplus H^2(G;\Z G). \]

Since $S$ is stably free and so projective, we have that $S^{**} \cong S$ by \cref{prop:dual-of-proj} (ii). Since $\cd(G)=2$, we have $H^2(G;\Z G)^* = 0$ \cite[Proposition 4.6]{HH19}. This gives:
\[ \pi_2(M(X))^* \cong S^* \oplus S^{**} \oplus H^2(G;\Z G)^* \cong S \oplus S^*, \quad \pi_2(M(Y)) \cong (\Z G^{2k})^* \oplus H^2(G;\Z G)^* \cong \Z G^{2k}. \]
If $M(X) \simeq M(Y)$ then, by the discussion at the start of \cref{section:module-invariants}, $\pi_2(M(X))$ and $\pi_2(M(Y))$ are $\Aut(G)$-isomorphic. By \cref{prop:dual-of-Aut(G)}, this implies that $\pi_2(M(X))^*$ and $\pi_2(M(Y))^*$ are $\Aut(G)$-isomorphic. 
In particular, for some $\theta \in \Aut(G)$, there are isomorphisms of $\Z G$-modules:
\[ S \oplus S^* \cong  \pi_2(M(X))^* \cong (\pi_2(M(Y))^*)_\theta \cong (\Z G^{2k})_\theta \cong \Z G^{2k}.\]
This contradicts the hypothesis that $S \oplus S^*$ is non-free. Hence $M(X) \not \simeq M(Y)$, as claimed.

Now let $\equiv$ denote either homeomorphism or diffeomorphism. Let $\ell \ge 0$ be minimal for which $M(X) \# \ell (S^2 \times S^2) \equiv M(Y') \# \ell (S^2 \times S^2)$. Since $M(X) \not \simeq M(Y')$ implies $M(X) \not \equiv M(Y)$, we have $\ell \ge 1$.
Let $M := M(X) \# (\ell-1)(S^2 \times S^2)$, $N := M(Y) \# (\ell-1)(S^2 \times S^2)$ and $N_0 := M(Y_0) \# (\ell-1)(S^2 \times S^2)$. By minimality of $\ell$, we have $M \not \equiv N$ and $M \# (S^2 \times S^2) \equiv N \# (S^2 \times S^2)$. Since $M(Y) \equiv M(Y_0) \# k(S^2 \times S^2)$, we have that $N \equiv N_0 \# k(S^2 \times S^2)$ by taking the connected sum with $(\ell-1)(S^2 \times S^2)$.
This shows that both $\M^{\Diff}(G)$ and $\M(G)$ fail cancellation at level $k$, as required.
\end{proof}

\section{Further directions}
\label{section:problems}

We will now collect together a list of open problems on unstable classification. The problems are arranged into lists (A) Finitely generated projective $\Z G$-modules, and (B) Finite 2-complexes.
The spirit of these problems is to search for examples which illustrate structural features of each respective classification, much like Wall's list of problems concerning finite 2-complexes \cite[List D]{Wa79}.

\renewcommand{\thesubsection}{\Alph{subsection}}
\renewcommand{\theplist}{\thesubsection\arabic{plist}}

\subsection*{Finitely generated projective $\Z G$-modules}
\setcounter{subsection}{1}

Recall that, for a projective $\Z G$-module $P$, the \textit{rank} is defined as 
\[\rank_{\Z G}(P) = \rank_{\Z}(\Z \otimes_{\Z G} P)\]
and the \textit{level} is $\ell(P) = \max\{m-n : P \oplus \Z G^n \cong Q \oplus \Z G^m, Q \in P(\Z G)\}$.

\begin{plist} \label{problem:proj-rank=0}
Do non-zero finitely generated projective $\Z G$-modules have non-zero rank?
\end{plist}

\begin{remark*}
This is true if $G$ is finite by Swan \cite{Sw60} and, more generally, provided $G$ satisfies Bass' strong conjecture on Hattori-Stallings rank. In particular, it holds if $(\Q,+) \not \le G$ \cite[Lemma 2.8]{Ev99b}.
This is true for stably free $\Z G$-modules over all groups $G$ since $\Z G$ is stably finite (see \cref{subsection:stably-finite}). In particular, if $G$ is torsion free and satisfies the Farrell-Jones conjecture, then $\wt K_0(\Z G)=0$ and so the statement holds (see \cref{problem:proj-t-free}).
It was pointed out by F. E. A. Johnson \cite{Jo23+} that, by the examples of Akasaki \cite{Ak82}, \cref{problem:proj-rank=0} has a negative answer in the case of infinitely generated projective modules. More specifically, for every non-solvable finite group $G$ there is an infinitely generated projective $\Z G$-module $P$ which is not free and for which $\Z \otimes_{\Z G} P = 0$.
\end{remark*}

\begin{plist} \label{problem:min-rank-in-c}
Does every stable class in $\wt K_0(\Z G)$ contain a projective $\Z G$-module of rank one? That is, do we have $\ell(P) = \rank_{\Z G}(P)-1$ for all finitely generated projective $\Z G$-modules $P$?
\end{plist}

\begin{remark*}
This is true for $G$ finite by Swan \cite{Sw60}. Since the zero class in $\wt K_0(\Z G)$ contains $\Z G$, this is also true for any group $G$ such that $\wt K_0(\Z G)=0$. For example, as above, this is true provided $G$ is torsion free and satisfies the Farrell-Jones conjecture.
\end{remark*}

\begin{plist} \label{problem:P(ZG)-multiple-stab}
For which $k \ge 2$ does there exist a group $G$ and finitely generated projective $\Z G$-modules $P$ and $Q$ such that $P \oplus \Z G^k \cong Q \oplus \Z G^k$ but $P \oplus \Z G^{k-1} \not \cong Q \oplus \Z G^{k-1}$? (see \cref{figure:further-branching}a)
\end{plist}

\begin{remark*}
This is open for all $k \ge 2$. Examples here would give further examples of the type considered in \cref{thm:simple-modules}. Presumably constructing examples which are stably free $\Z G$-modules would be most straightforward.
\end{remark*}

\begin{plist} \label{problem:P(ZG)-bound}
Does every finitely presented group $G$ have a cancellation bound for projective $\Z G$-modules? That is, does there exist a constant $d$ for which $P \oplus \Z G \cong Q \oplus \Z G$ implies $P \cong Q$ for finitely generated projective $\Z G$-modules $P$ and $Q$ of rank $\ge d$? (see \cref{figure:further-branching}b)
\end{plist}

\begin{remark*}
As explained in the introduction, examples do not exist when $\Z G$ is Noetherian (such as if $G$ is polycyclic-by-finite).
Examples were constructed in \cref{thm:main-SF-further} for $G=\ast_{i=1}^\infty T$, which is not finitely presented.
It would be interesting to know whether or not this example can be modified to give an example over a finitely presented group. For example, $G=\ast_{i=1}^\infty T \cong \ast_{i\in \Z} T$ is a subgroup of the finitely presented group $(\ast_{i \in \Z} T) \rtimes \Z$, where $\Z$ freely permutes the copies of $T$, which is isomorphic to $T \times C_\infty$. However, the non-free stably free modules of \cref{thm:main-SF-further} all become free upon passage to $\Z[T\times C_\infty]$ via extension of scalars. We are indebted to Sam Hughes for discussions on this point.
\end{remark*}

\begin{figure}[h] \vspace{-4mm} 
\begin{center}
\begin{tabular}{ccccc}
\begin{tabular}{l}
\begin{tikzpicture}
\draw[fill=black] (1,0) circle (2pt);
\draw[fill=black] (1.5,1) circle (2pt);
\draw[fill=black] (3,0) circle (2pt);
\draw[fill=black] (2.5,1) circle (2pt);
\draw[fill=black] (2,2) circle (2pt);
\draw[fill=black] (2,3) circle (2pt);

\node at (2,3.6) {$\vdots$};
\draw[black] (2.75,1) node[right]{
$\left.
    \begin{array}{ll}
           \\ \\ \\
    \end{array}
\right \} k$};
\node at (1,3) {(a)};

\draw[thick] (1,0) -- (1.5,1) -- (2,2) -- (2,3)  (3,0) -- (2.5,1) -- (2,2);
\end{tikzpicture} 
\end{tabular}
&&&&
\begin{tabular}{l}
\begin{tikzpicture}
\draw[white] (1,0) node[right]{$\ell = 0$};

\draw[fill=black] (-0.25,0) circle (2pt);
\draw[fill=black] (0.5,0) circle (2pt);
\draw[fill=black] (2,0) circle (2pt);
\draw[fill=black] (2,1) circle (2pt);
\draw[fill=black] (2,2) circle (2pt);
\draw[fill=black] (2,3) circle (2pt);
\draw[fill=black] (1.25,0) circle (2pt);
\draw[fill=black] (1.25,1) circle (2pt);
\draw[fill=black] (0.5,1) circle (2pt);

\node at (1.2,3) {(b)};
\node at (2,3.8) {$\vdots$};
\draw[black] (2,1) node[right]{
$\left.
    \begin{array}{ll}
           \\ \\ \\
    \end{array}
\right \} d$};
\draw[thick] (-0.25,0) -- (0.5,1)--(2,2) (0.5,0) -- (0.5,1) (2,0) -- (2,1) 
(2,1) -- (2,2) -- (2,3) (1.25,0)--(2,1) (1.25,1)--(2,2);
\end{tikzpicture}
\end{tabular}
\end{tabular}
\end{center}
\caption{Further branching phenomena}
\label{figure:further-branching}
\vspace{-2mm}
\end{figure}

\begin{plist} \label{problem:S+S*}
Does there exists a group $G$ and finitely generated projective $\Z G$-modules $P$ and $Q$ such that $P \oplus \Z G \cong Q \oplus \Z G$ but $P \oplus P^* \not \cong Q \oplus Q^*$?
\end{plist}

\begin{remark*}
By Swan \cite{Sw60}, there are no examples when $G$ is finite. This is motivated by \cref{thm:S+S*-intro}. Given these applications, the main case of interest is therefore the case where $G$ is finitely presented and of type $\FL$, and where $P$ is stably free.
\end{remark*}

\begin{plist} \label{problem:proj-t-free}
Does there exist a torsion free group $G$ and a finitely generated projective $\Z G$-module which is not stably free?
\end{plist}

\begin{remark*}
This is a well known problem and appeared, for example, in Wall's problem list \cite[Problem A1]{Wa79}.
Note that the Farrell-Jones conjecture is a broad generalisation of this question. Many torsion free groups $G$ are known to satisfy the Farrell-Jones conjecture (see \cite{LR05}) and so the question above has a negative answer in these cases.
Whilst the examples obtained in this paper are all stably free, they do at least demonstrate that more elaborate projective modules exist in the case of torsion free groups.	
\end{remark*}

\subsection*{Finite 2-complexes}
\setcounter{plist}{0}
\setcounter{subsection}{\value{subsection}+1}

Recall that a CW-complex $X$ is irreducible if $X \simeq Y \vee Z$ for CW-complexes $Y$, $Z$ implies that $Y$ or $Z$ is contractible.

\begin{plist} \label{problem:complexes-irred}
Let $X_i$, $Y_i$ be irreducible non-simply connected finite $2$-complexes. When does 
\[X_1 \vee \cdots \vee X_k \simeq Y_1 \vee \cdots \vee Y_k\] 
imply that $X_i \simeq Y_{\sigma(i)}$ for some $\sigma \in S_k$?
\end{plist}

\begin{remark*}
This is motivated by the results in \cref{section:induced}. Here irreducibility is necessary since it rules out the following two situations:
\begin{clist}{(a)}
\item \textit{Exchange of subfactors}: If $X \not \simeq Z$, $Y \not \simeq \ast$, then $(X \vee Y) \vee Z \simeq X \vee (Y \vee Z)$.
\item \textit{Non-cancellation}: If $X \vee S^2 \simeq Y \vee S^2$, $X \not \simeq Y$, then $X \vee (Z \vee S^2) \simeq Y \vee (Z \vee S^2)$. 
\end{clist}

The finite $2$-complexes given in the proof of \cref{thm:non-uniqueness} are irreducible and so show that some further conditions must be imposed.
This was shown to be true by Jajodia \cite[Corollary 4]{Ja79} in the case where the $X_i$, $Y_i$ have a single $2$-cell.
\end{remark*}

\begin{plist} \label{problem:complexes-mult}
For which $k \ge 1$ do there exist finite $2$-complexes $X_1$, $X_2$ with $\pi_1(X_1) \cong \pi_1(X_2)$ such that $X_1 \vee kS^2 \simeq X_2 \vee kS^2$ and $X_1 \vee (k-1)S^2 \not \simeq X_2 \vee (k-1)S^2$? (see \cref{figure:further-branching}a)
\end{plist}

\begin{remark*}
This is open for all $k \ge 1$. The question was asked in the case $k=2$ can be found in \cite[Problem C]{Dy79a} and later appeared in \cite[p124]{HMS93}. Following the same method of proof of \cref{thm:main}, one imagines that examples in \cref{problem:P(ZG)-multiple-stab} which are stably free could lead to examples here.
\end{remark*}

\begin{plist} \label{problem:complexes-bound}
Does there exist a finitely presented group $G$ such that, for infinitely many $k \ge 0$, there are homotopically distinct finite $2$-complexes at level $k$? (see \cref{figure:further-branching}b)
\end{plist}

\begin{remark*}
This is the analogue of \cref{problem:P(ZG)-bound} for finite 2-complexes. As with \cref{problem:complexes-mult}, examples there which are stably free could lead to examples here. Note that this is equivalent to asking that, for infinitely many $k \ge 0$, there are homotopically distinct finite $2$-complexes $X_1$, $X_2$ with $\pi_1(X_i) \cong G$ and $\chi(X_i) = k + \chi_{\min}(G)$.
\end{remark*}

\section*{Conflict of interest}

 On behalf of all authors, the corresponding author states that there is no conflict of interest.

\bibliography{stablyfree.bib}

\providecommand{\bysame}{\leavevmode\hbox to3em{\hrulefill}\thinspace}
\providecommand{\MR}{\relax\ifhmode\unskip\space\fi MR }
\providecommand{\MRhref}[2]{%
  \href{http://www.ams.org/mathscinet-getitem?mr=#1}{#2}
}
\providecommand{\href}[2]{#2}
\begin{thebibliography}{10}

\bibitem{Ak82}
T.~Akasaki, \emph{A note on nonfinitely generated projective {${\bf Z}\pi $}-modules}, Proc. Amer. Math. Soc. \textbf{86} (1982), no.~3, 391.

\bibitem{AOP02}
P.~Ara, K.~C. O'Meara, and F.~Perera, \emph{Stable finiteness of group rings in arbitrary characteristic}, Adv. Math. \textbf{170} (2002), no.~2, 224--238.

\bibitem{Ar81}
V.~A. Artamonov, \emph{Projective nonfree modules over group rings of solvable groups}, Mat. Sb. (N.S.) \textbf{116(158)} (1981), no.~2, 232--244.

\bibitem{AB89}
V.~A. Artamonov and A.~A. Bovdi, \emph{Integral group rings: groups of invertible elements and classical {$K$}-theory}, Algebra. {T}opology. {G}eometry, {V}ol. 27 ({R}ussian), Itogi Nauki i Tekhniki, Akad. Nauk SSSR, Vsesoyuz. Inst. Nauchn. i Tekhn. Inform., Moscow, 1989, Translated in J. Soviet Math. {{\bf{5}}7} (1991), no. 2, 2931--2958, pp.~3--43, 232.

\bibitem{Ba64}
H.~Bass, \emph{Projective modules over free groups are free}, J. Algebra \textbf{1} (1964), 367--373.

\bibitem{Ba68}
\bysame, \emph{Algebraic {$K$}-theory}, W. A. Benjamin, Inc., New York-Amsterdam, 1968.

\bibitem{Be74}
G.~M. Bergman, \emph{Modules over coproducts of rings}, Trans. Amer. Math. Soc. \textbf{200} (1974), 1--32.

\bibitem{BD79}
P.~H. Berridge and M.~J. Dunwoody, \emph{Nonfree projective modules for torsion-free groups}, J. London Math. Soc. (2) \textbf{19} (1979), no.~3, 433--436.

\bibitem{BB97}
M.~Bestvina and N.~Brady, \emph{Morse theory and finiteness properties of groups}, Invent. Math. \textbf{129} (1997), no.~3, 445--470.

\bibitem{Bi81}
R.~Bieri, \emph{Homological dimension of discrete groups}, second ed., Queen Mary College Mathematics Notes, Queen Mary College, Department of Pure Mathematics, London, 1981.

\bibitem{BC+21}
I.~Bokor, D.~Crowley, S.~Friedl, F.~Hebestreit, D.~Kasproswki, M.~Land, and J.~Nicholson, \emph{Connected sum decompositions of high-dimensional manifolds}, 2019--20 {MATRIX} annals, MATRIX Book Ser., vol.~4, Springer, Cham, 2021, pp.~5--30.

\bibitem{BCD21}
J.~Bowden, D.~Crowley, J.~Davis, S.~Friedl, C.~Rovi, and S.~Tillmann, \emph{Open problems in the topology of manifolds}, 2019--20 {MATRIX} annals, MATRIX Book Ser., vol.~4, Springer, Cham, 2021, pp.~647--659.

\bibitem{Br82}
K.~S. Brown, \emph{Cohomology of groups}, Graduate Texts in Mathematics, vol.~87, Springer-Verlag, New York-Berlin, 1982.

\bibitem{Br78}
W.~J. Browning, \emph{Homotopy types of certain finite {CW}-complexes with finite fundamental group}, Ph.D. thesis, Cornell University, 1978.

\bibitem{Br79}
\bysame, \emph{Finite {CW}-complexes of cohomological dimension {$2$} with finite abelian {$\pi_1$}}, ETH preprint (unpublished) (1979).

\bibitem{Co73}
M.~M. Cohen, \emph{A course in simple-homotopy theory}, Graduate Texts in Mathematics, Vol. 10, Springer-Verlag, New York-Berlin, 1973.

\bibitem{Co66}
P.~M. Cohn, \emph{Some remarks on the invariant basis property}, Topology \textbf{5} (1966), 215--228.

\bibitem{Do83}
S.~K. Donaldson, \emph{An application of gauge theory to four-dimensional topology}, J. Differential Geom. \textbf{18} (1983), no.~2, 279--315.

\bibitem{Du72}
M.~J. Dunwoody, \emph{Relation modules}, Bull. London Math. Soc. \textbf{4} (1972), 151--155.

\bibitem{Du76}
\bysame, \emph{The homotopy type of a two-dimensional complex}, Bull. London Math. Soc. \textbf{8} (1976), no.~3, 282--285.

\bibitem{Dy79b}
M.~N. Dyer, \emph{Non-minimal roots in homotopy trees}, Pacific J. Math. \textbf{80} (1979), no.~2, 371--380.

\bibitem{Dy79a}
\bysame, \emph{Trees of homotopy types of {$(\pi ,\,m)$}-complexes}, Homological group theory ({P}roc. {S}ympos., {D}urham, 1977), London Math. Soc. Lecture Note Ser., vol.~36, Cambridge Univ. Press, Cambridge-New York, 1979, pp.~251--254.

\bibitem{ES04}
G.~Elek and E.~Szab\'{o}, \emph{Sofic groups and direct finiteness}, J. Algebra \textbf{280} (2004), no.~2, 426--434.

\bibitem{Ev99}
M.~J. Evans, \emph{Epimorphisms between the free groups in a variety of groups}, J. Algebra \textbf{220} (1999), no.~2, 492--511.

\bibitem{Ev99b}
\bysame, \emph{Relation modules of infinite groups}, Bull. London Math. Soc. \textbf{31} (1999), no.~2, 154--162.

\bibitem{Fr82}
M.~H. Freedman, \emph{The topology of four-dimensional manifolds}, J. Differential Geometry \textbf{17} (1982), no.~3, 357--453.

\bibitem{GL91}
M.~Gutierrez and M.~P. Latiolais, \emph{Partial homotopy type of finite two-complexes}, Math. Z. \textbf{207} (1991), no.~3, 359--378.

\bibitem{Ha09}
I.~Hambleton, \emph{Intersection forms, fundamental groups and 4-manifolds}, Proceedings of {G}\"{o}kova {G}eometry-{T}opology {C}onference 2008, G\"{o}kova Geometry/Topology Conference (GGT), G\"{o}kova, 2009, pp.~137--150.

\bibitem{HH19}
I.~Hambleton and A.~Hildum, \emph{Topological 4-manifolds with right-angled {A}rtin fundamental groups}, J. Topol. Anal. \textbf{11} (2019), no.~4, 777--821.

\bibitem{HK93-II}
I.~Hambleton and M.~Kreck, \emph{Cancellation of hyperbolic forms and topological four-manifolds}, J. Reine Angew. Math. \textbf{443} (1993), 21--47.

\bibitem{HK93-I}
\bysame, \emph{Cancellation of lattices and finite two-complexes}, J. Reine Angew. Math. \textbf{442} (1993), 91--109.

\bibitem{HJ06a}
J.~Harlander and J.~A. Jensen, \emph{Exotic relation modules and homotopy types for certain 1-relator groups}, Algebr. Geom. Topol. \textbf{6} (2006), 2163--2173.

\bibitem{HJ06b}
\bysame, \emph{On the homotopy type of {CW}-complexes with aspherical fundamental group}, Topology Appl. \textbf{153} (2006), no.~15, 3000--3006.

\bibitem{HLM85}
C.~Hog, M.~Lustig, and W.~Metzler, \emph{Presentation classes, 3-manifolds and free products}, Geometry and topology (College Park, Md., 1983/84), Lecture Notes in Math., vol. 1167, Springer, Berlin, 1985, pp.~154--167.

\bibitem{HMS93}
C.~Hog-Angeloni, W.~Metzler, and A.~J. Sieradski (eds.), \emph{Two-dimensional homotopy and combinatorial group theory}, London Mathematical Society Lecture Note Series, vol. 197, Cambridge University Press, Cambridge, 1993.

\bibitem{Ja79}
S.~Jajodia, \emph{On {$2$}-dimensional {CW}-complexes with a single {$2$}-cell}, Pacific J. Math. \textbf{80} (1979), no.~1, 191--203.

\bibitem{Jo83}
F.~E.~A. Johnson, \emph{Automorphisms of direct products of groups and their geometric realisations}, Math. Ann. \textbf{263} (1983), no.~3, 343--364.

\bibitem{Jo03a}
\bysame, \emph{Stable modules and the {$D(2)$}-problem}, London Mathematical Society Lecture Note Series, vol. 301, Cambridge University Press, Cambridge, 2003.

\bibitem{Jo12b}
\bysame, \emph{Stably free cancellation for group rings of cyclic and dihedral type}, Q. J. Math. \textbf{63} (2012), no.~3, 623--635.

\bibitem{Jo12a}
\bysame, \emph{Syzygies and homotopy theory}, Algebra and Applications, vol.~17, Springer, London, 2012.

\bibitem{Jo23+}
\bysame, \emph{On the rank of projective modules}, Arch. Math. (Basel) \textbf{124} (2025), no.~3, 233--241.

\bibitem{Ka10}
P.~Kamali, \emph{Stably free modules over infinite group algebras}, Ph.D. thesis, University College London, 2010.

\bibitem{Ka72}
I.~Kaplansky, \emph{Fields and rings}, second ed., The University of Chicago Press, Chicago, Ill.-London, 1972, Chicago Lectures in Mathematics.

\bibitem{KPR22}
D.~Kasprowski, M.~Powell, and A.~Ray, \emph{Counterexamples in 4-manifold topology}, EMS Surv. Math. Sci. \textbf{9} (2022), no.~1, 193--249.

\bibitem{Kr99}
M.~Kreck, \emph{Surgery and duality}, Ann. of Math. (2) \textbf{149} (1999), no.~3, 707--754.

\bibitem{Kr16}
\bysame, \emph{Thoughts about a good classification of manifolds}, Proceedings of the {G}\"{o}kova {G}eometry-{T}opology {C}onference 2015, G\"{o}kova Geometry/Topology Conference (GGT), G\"{o}kova, 2016, pp.~187--201.

\bibitem{KS84}
M.~Kreck and J.~A. Schafer, \emph{Classification and stable classification of manifolds: some examples}, Comment. Math. Helv. \textbf{59} (1984), no.~1, 12--38.

\bibitem{KL19}
P.~Kropholler and K.~Lorensen, \emph{Group-graded rings satisfying the strong rank condition}, J. Algebra \textbf{539} (2019), 326--338.

\bibitem{Le82}
J.~Lewin, \emph{Projective modules over group-algebras of torsion-free groups}, Michigan Math. J. \textbf{29} (1982), no.~1, 59--64.

\bibitem{Li93}
P.~A. Linnell, \emph{Minimal free resolutions and {$(G,n)$}-complexes for finite abelian groups}, Proc. London Math. Soc. (3) \textbf{66} (1993), no.~2, 303--326.

\bibitem{LR05}
W.~L\"{u}ck and H.~Reich, \emph{The {B}aum-{C}onnes and the {F}arrell-{J}ones conjectures in {$K$}- and {$L$}-theory}, Handbook of {$K$}-theory. {V}ol. 1, 2, Springer, Berlin, 2005, pp.~703--842.

\bibitem{Lu91}
M.~Lustig, \emph{Nielsen equivalence and simple-homotopy type}, Proc. London Math. Soc. (3) \textbf{62} (1991), no.~3, 537--562.

\bibitem{Lu93}
\bysame, \emph{Infinitely many pairwise homotopy inequivalent {$2$}-complexes {$K_i$} with fixed {$\pi_1K_i$} and {$\chi(K_i)$}}, J. Pure Appl. Algebra \textbf{88} (1993), no.~1-3, 173--175.

\bibitem{MW50}
S.~MacLane and J.~H.~C. Whitehead, \emph{On the {$3$}-type of a complex}, Proc. Nat. Acad. Sci. U.S.A. \textbf{36} (1950), 41--48.

\bibitem{Ma09}
W.~H. Mannan, \emph{Realizing algebraic 2-complexes by cell complexes}, Math. Proc. Cambridge Philos. Soc. \textbf{146} (2009), no.~3, 671--673.

\bibitem{Ma77}
W.~S. Massey, \emph{Algebraic topology: an introduction}, Springer-Verlag, New York-Heidelberg, 1977, Reprint of the 1967 edition, Graduate Texts in Mathematics, Vol. 56.

\bibitem{Me76}
W.~Metzler, \emph{\"{U}ber den {H}omotopietyp zweidimensionaler {$CW$}-{K}omplexe und {E}lementartransformationen bei {D}arstellungen von {G}ruppen durch {E}rzeugende und definierende {R}elationen}, J. Reine Angew. Math. \textbf{285} (1976), 7--23.

\bibitem{Me90}
\bysame, \emph{Die {U}nterscheidung von {H}omotopietyp und einfachem {H}omotopietyp bei zweidimensionalen {K}omplexen}, J. Reine Angew. Math. \textbf{403} (1990), 201--219.

\bibitem{Ni20a}
J.~Nicholson, \emph{Projective modules and the homotopy classification of {$(G,n)$}-complexes}, Algebr. Geom. Topol., to appear (2020), arXiv:2004.04252.

\bibitem{Ni19}
\bysame, \emph{On {CW}-complexes over groups with periodic cohomology}, Trans. Amer. Math. Soc. \textbf{374} (2021), no.~9, 6531--6557.

\bibitem{Os12}
S.~O'Shea, \emph{Stably free modules over virtually free groups}, Arch. Math. (Basel) \textbf{99} (2012), no.~3, 207--215.

\bibitem{SW79}
G.~P. Scott and C.~T.~C. Wall, \emph{Topological methods in group theory}, Homological group theory ({P}roc. {S}ympos., {D}urham, 1977), London Math. Soc. Lecture Note Ser., vol.~36, Cambridge Univ. Press, 1979, pp.~137--203.

\bibitem{Se71}
J.~P. Serre, \emph{Cohomologie des groupes discrets}, Prospects in mathematics ({P}roc. {S}ympos., {P}rinceton {U}niv., {P}rinceton, {N}.{J}., 1970), 1971, pp.~77--169. Ann. of Math. Studies, No. 70.

\bibitem{SD79}
A.~J. Sieradski and M.~N. Dyer, \emph{Distinguishing arithmetic for certain stably isomorphic modules}, J. Pure Appl. Algebra \textbf{15} (1979), no.~2, 199--217.

\bibitem{St63}
J.~Stallings, \emph{A finitely presented group whose 3-dimensional integral homology is not finitely generated}, Amer. J. Math. \textbf{85} (1963), 541--543.

\bibitem{Sw60}
R.~G. Swan, \emph{Induced representations and projective modules}, Ann. of Math. (2) \textbf{71} (1960), 552--578.

\bibitem{Sw62}
\bysame, \emph{Projective modules over group rings and maximal orders}, Ann. of Math. (2) \textbf{76} (1962), 55--61.

\bibitem{Sw65}
\bysame, \emph{Minimal resolutions for finite groups}, Topology \textbf{4} (1965), 193--208.

\bibitem{Wa66}
C.~T.~C. Wall, \emph{Classification problems in differential topology. {IV}. {T}hickenings}, Topology \textbf{5} (1966), 73--94.

\bibitem{Wa79}
\bysame, \emph{List of problems}, Homological group theory ({P}roc. {S}ympos., {D}urham, 1977), London Math. Soc. Lecture Note Ser., vol.~36, Cambridge Univ. Press, Cambridge-New York, 1979, pp.~369--394.

\bibitem{Wh78}
G.~W. Whitehead, \emph{Elements of homotopy theory}, Graduate Texts in Mathematics, vol.~61, Springer-Verlag, New York-Berlin, 1978.

\bibitem{Wh39}
J.~H.~C. Whitehead, \emph{Simplicial {S}paces, {N}uclei and m-{G}roups}, Proc. London Math. Soc. (2) \textbf{45} (1939), no.~4, 243--327.

\end{thebibliography}
\bibliographystyle{amsplain}

\end{document}